\documentclass[a4paper,11pt]{article}
\usepackage{stmaryrd}
\usepackage{bbm}
\usepackage[top=2.5cm, bottom=2.6cm, left=3cm, right=2.6cm]{geometry}
\usepackage{amsfonts}
\usepackage{eufrak}
\usepackage{eucal}
\usepackage{mathrsfs}
\usepackage{amsmath}
\usepackage{latexsym}
\usepackage{color}
\usepackage{amscd}
\usepackage{amssymb}
\usepackage{amsthm}
\usepackage{latexsym}
\usepackage{indentfirst}


\newtheorem{theorem}{Theorem}[section]

\newtheorem{corollary}{Corollary}[section]
\newtheorem{lemma}{Lemma}[section]
\newtheorem{proposition}{Proposition}[section]

\begin{document}

\title{Kazhdan-Lusztig bases and the asymptotic forms\\ for affine $q$-Schur algebras}
\author{Weideng Cui}
\date{}
\maketitle \abstract{We define Kazhdan-Lusztig bases and study asymptotic forms for affine $q$-Schur algebras following Du and McGerty. We will show that the analogues of Lusztig's conjectures for Hecke algebras with unequal parameters hold for affine $q$-Schur algebras. We will also show that the affine $q$-Schur algebra $\mathcal{S}_{q,k}^{\vartriangle}(2,2)$ over a field $k$ has finite global dimension when char $k=0$ and $1+q\neq 0.$}\\

\thanks{{Keywords}: Kazhdan-Lusztig bases; Extended affine Hecke algebras; Affine $q$-Schur algebras; Asymptotic forms; The lowest two-sided cells} \large

\medskip
\section{Introduction}

In their influential paper [KL1] in 1979, Kazhdan and Lusztig introduced some remarkable bases for the Hecke algebra of an arbitrary Coxeter group. These bases play an important role in the representation theory of algebraic groups and quantum groups. When the Coxeter group is crystallographic (including finite and affine Weyl groups), the Kazhdan-Lusztig polynomials and the structure constants for the Kazhdan-Lusztig bases have non-negative cofficients (see [KL2] and [L2], see also [EW] for an arbitrary Coxeter group). An important application of Kazhdan-Lusztig bases and positivity properties for Hecke algebras is the asymptotic methods introduced by Lusztig in the study of cell classification of affine Weyl groups ([L2], [L3] and [L5]) and representations of Hecke algebras ([L4] and [L6]).

In [Du1], Du has introduced some new bases for the $q$-Schur algebras, which behave in a similar way as the Kazhdan-Lusztig bases for the Hecke algebras. He has also discussed some basic properties and shown that the new bases are exactly the same as the BLM's bases in [BLM]. In [Du2], Du has studied some further applications of these bases, such as the asymptotic algebras of the $q$-Schur algebras. He has established the double centralizer property and proved that the asymptotic algebra is a direct sum of full matrix rings over $\mathbb{Z}.$

In [Mc], in order to obtain the cell structure of quantum affine $\mathfrak{s}\mathfrak{l}_{n},$ McGerty first investigates the structure of cells in the affine $q$-Schur algebras $\mathcal{S}_{q}^{\vartriangle}(n,r).$ He defined distinguished elements in $\mathcal{S}_{q}^{\vartriangle}(n,r),$ and showed that all the notions of cells for $\mathcal{S}_{q}^{\vartriangle}(n,r)$ can be deduced from those for the affine Hecke algebras. He also defined the asymptotic algebra of an affine $q$-Schur algebra and obtained its structure by analogy with the affine Hecke algebra case.

In this paper, we will generalize Du's results to the affine $q$-Schur algebra case and give an explicit proof of some results of McGerty's. We will show that the analogues of Lusztig's conjectures for Hecke algebras with unequal parameters hold for affine $q$-Schur algebras. We will show that the affine $q$-Schur algebra $\mathcal{S}_{q,k}^{\vartriangle}(2,2)$ over a field $k$ has finite global dimension and its derived module category admits a stratification when char~$k=0$ and $1+q\neq 0.$ We will also show that the Kazhdan-Lusztig basis for the affine $q$-Schur algebra algebraically defined by Green is exactly the same as the canonical basis geometrically defined by Lusztig, thus we have the positivity properties for the Kazhdan-Lusztig basis, which is probably known to experts.

The organization of this article is as follows. In Section 2, we recall the definition and basic properties of Kazhdan-Lusztig bases and asymptotic forms for affine Hecke algebras of type $A.$ In Section 3, we study the Kazhdan-Lusztig bases for affine $q$-Schur algebras. In Section 4, we study the asymptotic forms and get the double centralizer property for affine $q$-Schur algebras (see Theorem 4.1 and 4.2). In Section 5, we will show that the analogues of Lusztig's conjectures for Hecke algebras with unequal parameters hold for affine $q$-Schur algebras (see Proposition 5.1). In Section 6, we will study some relatively simple examples of affine $q$-Schur algebras. In Section 7, we will show that the Kazhdan-Lusztig basis for the affine $q$-Schur algebra algebraically defined by Green is exactly the same as the canonical basis geometrically defined by Lusztig (see Theorem 7.1).

\section{Kazhdan-Lusztig bases and asymptotic forms for affine Hecke algebras}

Throughout this paper, we only consider the extended affine Weyl group $W$ of type $\tilde{A}_{r-1}.$ Recall that $W$ is the group consisting of all permutations $\sigma: \mathbb{Z}\rightarrow \mathbb{Z}$ such that $\sigma(i+r)=\sigma(i)+r$ for all $i\in \mathbb{Z}.$ Let $s_{i}$ ($0\leq i\leq r-1$) be defined by setting $s_{i}(j)=j$ for $j\not\equiv i, i+1$ mod $r,$ $s_{i}(j)=j-1$ for $j\equiv i+1$ mod $r,$ and $s_{i}(j)=j+1$ for $j\equiv i$ mod $r,$ and let $\rho$ be the permutation of $\mathbb{Z}$ taking $t$ to $t+1$ for all $t.$ Let $W'$ be the subgroup of $W$ generated by all $S=\{s_{i}\}_{0\leq i\leq r-1}$ and let $\Omega$ be the subgroup generated by $\rho,$ then it is well-known that $\Omega$ is an infinite cyclic group and $(W', S)$ is a Coxeter group. Moreover we have $W\cong \Omega \ltimes W'$ is an extended Coxeter group, the length function $l$ and the Bruhat order $\leq$ on $W'$ can be extended to $W.$\vskip2mm

2.1 Let $t=q^{\frac{1}{2}}$ be an indeterminate and let $\mathcal{A}=\mathbb{Z}[t, t^{-1}]$ be the ring of Laurent polynomials in $t.$ We set $\mathcal{A}^{-}=\mathbb{Z}[t^{-1}],$ $\mathcal{A}^{+}=\mathbb{Z}[t],$ and denote by $\mathbf{A}$ the quotient field $\mathbb{Q}(t)$ of $\mathcal{A}.$

Let $(W, S)$ be the extended Coxeter group as above. The extended affine Hecke algebra $\mathcal{H}_{\vartriangle}(r)$ over $\mathcal{A}$ corresponding to $W$ is a free $\mathcal{A}$-module with basis $\{T_{w}\}_{w\in W}$ satisfying \vskip2mm

$~~~~~~~~~~~~~~~~~~~~T_{w}T_{w'}=T_{ww'}$ ~~~~~~if ~~$l(ww')=l(w)+l(w'),$

$~~~~~~~~~~~~~~~(T_{s}+1)(T_{s}-q)=0$ ~~if ~~~~~~~~~~~$s\in S.$
\vskip2mm
The ring $\mathcal{A}$ admits an involution $-$ defined by $\overline{t}=t^{-1}.$ This extends to an involutive automorphism $-$ of $\mathcal{H}_{\vartriangle}(r),$ defined by $\overline{\sum a_{w} T_{w}}=\sum \overline{a_{w}} T_{w^{-1}}^{-1}.$ In [KL1], Kazhdan and Lusztig showed, for any $w\in W,$ there is a unique element $C_{w}'\in \mathcal{H}_{\vartriangle}(r)$ such that $$\overline{C_{w}'}=C_{w}' ~~\mathrm{and}~~ C_{w}'=\sum\limits_{y\leq w} (-1)^{l(w)-l(y)}q^{l(w)/2-l(y)} P_{y,w}(q^{-1}) T_{y},\eqno{(2.1)}$$
where $P_{y, w}$ is a polynomial in $q$ of degree $\leq \frac{1}{2}(l(w)-l(y)-1)$ for $y< w,$ and $P_{w,w}=1.$ The set $\{C_{w}'\}_{w\in W}$ forms a basis of $\mathcal{H}_{\vartriangle}(r).$

Let $j$ be the involution of the ring $\mathcal{H}_{\vartriangle}(r)$ given by $$j(\sum a_{w} T_{w})=\sum \overline{a_{w}} (-q)^{-l(w)} T_{w}.$$
Let $C_{w}=(-1)^{l(w)}j(C_{w}'),$ for each $w\in W.$ We have $$\overline{C_{w}}=C_{w} ~~\mathrm{and}~~ C_{w}=q^{-l(w)/2}\sum\limits_{y\leq w} P_{y,w}(q) T_{y}.\eqno{(2.2)}$$
$\{C_{w}'\}_{w\in W}$ and $\{C_{w}\}_{w\in W}$ are called Kazhdan-Lusztig bases. Note that our notations $C_{w}$ and $C_{w}'$ exchange these in [KL1], since we shall mainly use the elements $C_{w}.$

We define, for any $x, y, z\in W,$ some $h_{x, y, z}\in \mathcal{A}$ so that $$C_{x}C_{y}=\sum\limits_{z} h_{x,y,z} C_{z}.$$ According to [L2, (3.2.1)], $h_{x,y,z}$ has $\geq 0$ coefficients as a polynomial in $t, t^{-1},$ and $h_{x,y,z}(t)=h_{x,y,z}(t^{-1}).$\vskip2mm

2.2 For any $w\in W,$ we define $$\textbf{a}(z)= \mathrm{max}_{x,y\in W}\mathrm{deg}(h_{x,y,z}),$$
where the degree is taken with respect to $t$ and for any $x,y,z\in W,$ we define $$\gamma_{x,y,z}=\mathrm{coefficient ~of }~t^{\textbf{a}(z)}\mathrm{~in~}h_{x,y,z}.$$

In fact, $\textbf{a}(z)$ is the least non-negative integer satisfying $$t^{\textbf{a}(z)}h_{x,y,z}\in \mathcal{A}^{+},~~~~\forall~ x, y\in W,$$
and $\textbf{a}(z)=\textbf{a}(z^{-1})$ due to the fact $h_{x,y,z}=h_{y^{-1},x^{-1},z^{-1}}.$ This function on $W$ is usually called the $\textbf{a}$-function.

Let $\delta(z)$ be the degree of $P_{1,z}$ in $q$, we have (see [L3])\vskip2mm

(a) $\textbf{a}(z)\leq l(z)-2\delta(z)$ for all $z\in W.$ Put $\textbf{D}=\{z\in W|~\textbf{a}(z)=l(z)-2\delta(z)\}.$ Then it is a finite set;

(b) $\gamma_{x,y,d}\neq 0$ with $d\in \textbf{D}$ implies $x=y^{-1},$ $\gamma_{x,x^{-1},d}=1,$ and $d^{2}=1;$

(c) for each $x\in W,$ there is a unique $d\in \textbf{D}$ such that $\gamma_{x,x^{-1},d}=1;$

(d) $\gamma_{x,y,z}=\gamma_{y,z^{-1},x^{-1}}=\gamma_{z^{-1}, x, y^{-1}}=\gamma_{y^{-1},x^{-1},z^{-1}}$ for all $x,y,z\in W.$\vskip2mm

The elements in $\textbf{D}$ are called distinguished involutions.\vskip2mm

2.3 We define a preorder $\preceq_{L}$ on $W$ as follows: $x\preceq_{L}y$ if there exists $h\in \mathcal{H}_{\vartriangle}(r)$ such that the coefficient of $C_{x}$ in the linear expression of $hC_{y}$ is nonzero. The preorder $\preceq_{R}$ is defined by $x\preceq_{R}y$ if $x^{-1}\preceq_{L} y^{-1}.$ We denote by $x\preceq_{LR}y$ if there exist $h, h'\in \mathcal{H}_{\vartriangle}(r)$ such that the coefficient of $C_{x}$ in the linear expression of $hC_{y}h'$ is nonzero. Let $\sim_{L}, \sim_{R},$ and $\sim_{LR}$ be the equivalence relations defined by the preorders $\preceq_{L}, \preceq_{R},$ and $\preceq_{LR}$ respectively. The corresponding equivalence classes in $W$ are called left cells, right cells, and two-sided cells respectively.

For each $w\in W,$ let $$\mathfrak{L}(w)=\{s\in S|~sw< w\}~~~~\mathrm{and}~~~~\mathfrak{R}(w)=\{s\in S|~ws< w\}.$$
We have the following remarkable properties relating to these preorders (see [L2] and [L3]).\vskip2mm

(a) $x\preceq_{L} y\Rightarrow \mathfrak{R}(y)\subseteq \mathfrak{R}(x),$ ~ $x\preceq_{R} y\Rightarrow \mathfrak{L}(y)\subseteq \mathfrak{L}(x),$ ~$x\sim_{L} y\Rightarrow \mathfrak{R}(y)=\mathfrak{R}(x),$ ~$x\sim_{R} y\Rightarrow \mathfrak{L}(y)=\mathfrak{L}(x).$

(b) $z\preceq_{LR} z'\Rightarrow \textbf{a}(z)\geq \textbf{a}(z'),$ ~$z\sim_{LR} z'\Rightarrow \textbf{a}(z)= \textbf{a}(z').$

(c) If $\gamma_{x,y,z}\neq 0,$ then $x\sim_{L} y^{-1},$ $y\sim_{L} z,$ $x\sim_{R} z,$ and $\textbf{a}(x)=\textbf{a}(y)=\textbf{a}(z).$

(d) If $x\preceq_{L} y$ and $\textbf{a}(x)=\textbf{a}(y)$ then $x\sim_{L} y.$

(e) If $x\preceq_{L} y$ and $x\sim_{LR} y$ then $x\sim_{L} y.$

(f) For each $d\in \textbf{D}$ there is a unique left (resp. right) cell $\Gamma_{d}'$ (resp. $\Gamma_{d}$) containing $d$ and $\gamma_{y^{-1}, y, d}\neq 0$ for every $y\in \Gamma_{d}'.$\vskip2mm

2.4 Lusztig introduced in [L3] a ring $\mathcal{J}_{\mathbb{Z}}=\mathcal{J}(W)$ which is free over $\mathbb{Z}$ with a basis $\{t_{x}\}_{x\in W}$ satisfying $$t_{x}t_{y}=\sum\limits_{z\in W} \gamma_{x,y,z} t_{z},~~~~\mathrm{for}~x,y\in W.$$
Note that in the formula one have only to extend the sum on such $z\in W$ satisfying $\mathfrak{R}(z)=\mathfrak{R}(y)$ and $\mathfrak{L}(z)=\mathfrak{L}(x)$ by (a) and (c) in 2.3. It is an associative algebra with an identity element $\sum_{d\in \textbf{D}} t_{d}.$ He also proved that the $\mathcal{A}$-linear map $\phi: \mathcal{H}_{\vartriangle}(r)\rightarrow \mathcal{J}_{\mathbb{Z}}\otimes_{\mathbb{Z}} \mathcal{A},$ defined by $$\phi(C_{w})=\sum\limits_{u\in W}\sum\limits_{\substack{d\in \textbf{D}\\\textbf{a}(d)=\textbf{a}(u)}} h_{w,d,u} t_{u},~~~~w\in W,$$
is a homomorphism of $\mathcal{A}$-algebras with unit. Moreover $\phi$ is injective and becomes an isomorphism when tensored with $\textbf{A}.$ Moreover the equivalence relations $\sim_{L}$ and $\sim_{LR}$ can be described as follows: for any $y, w\in W$

(a) $y\sim_{L} w\Leftrightarrow t_{y}t_{w^{-1}}\neq 0,$

(b) $y\sim_{LR} w\Leftrightarrow t_{y}t_{x}t_{w}\neq 0$ for some $x\in W.$\vskip2mm

2.5 Let $S_{0}=\{s_{i}\}_{1\leq i\leq r-1}$ and let $I, J$ be subsets of $S_{0}.$ We denote by $\mathcal{D}_{IJ}$ (resp. $\mathcal{D}_{IJ}^{+}$) the set of distinguished representatives of minimal (resp. maximal) length in the double cosets $W_{I}\backslash W/W_{J},$ where $W_{K}$ is the parabolic subgroup of $W$ generated by $K\subseteq S_{0}.$ Each double coset $D\in W_{I}\backslash W/W_{J}$ is characterized as the set $$D=\{y\in W|~x\leq y\leq x^{+}\}$$
where $x\in D\cap \mathcal{D}_{IJ}$ and $x^{+}\in D\cap \mathcal{D}_{IJ}^{+}.$ We denote the obvious bijection from $\mathcal{D}_{IJ}$ to $\mathcal{D}_{IJ}^{+}$ by $x\mapsto x^{+},$ and let $\mathcal{D}_{I}=\mathcal{D}_{I\emptyset}.$

We define an $\mathcal{A}$-submodule $\mathcal{H}_{IJ}$ of $\mathcal{H}_{\vartriangle}(r)$ which is spanned by the standard basis elements $$T_{D}=\sum\limits_{x\in D} T_{x},~~~~D\in W_{I}\backslash W/W_{J},$$
then we have the following lemma by analogy with the finite Weyl group case (see [C, (1.9) and (1.10)])
\begin{lemma}
Maintain the above notation.

$(1)$ The $A$-submodule $\mathcal{H}_{IJ}$ of $\mathcal{H}_{\vartriangle}(r)$ is characterized as the set $$\{h\in \mathcal{H}_{\vartriangle}(r)|~T_{s}h=qh=hT_{s'}, ~\forall s\in I, s'\in J\}.$$

$(2)$ The elements $\{C_{w}|~w\in \mathcal{D}_{IJ}^{+}\}$ form an $\mathcal{A}$-basis of $\mathcal{H}_{IJ}.$

$(3)$ $\overline{\mathcal{H}_{IJ}}=\mathcal{H}_{IJ}.$
\end{lemma}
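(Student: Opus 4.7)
The plan is to prove the three parts in order, following the affine analogue of Curtis's argument that the statement already references.

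For $(1)$, the inclusion $\mathcal{H}_{IJ} \subseteq \{h \in \mathcal{H}_{\vartriangle}(r) \mid T_s h = qh = h T_{s'} \text{ for all } s\in I, s'\in J\}$ reduces to the identity $T_s T_D = q T_D$ for a single double coset $D$ and $s \in I$. Left multiplication by $s$ partitions $D$ into pairs $\{w, sw\}$, since $s \in W_I$ preserves $D$ and has order two; on each pair with $sw < w$, the quadratic relation $(T_s+1)(T_s-q)=0$ gives $T_s(T_w + T_{sw}) = q(T_w + T_{sw})$, and summing over pairs yields the identity. The symmetric statement on the right is identical. For the converse, expand $h = \sum_w a_w T_w$ and isolate the coefficient of $T_w$ in $T_s h - qh = 0$: this forces $a_w = a_{sw}$ for every $w\in W$ and every $s \in I$. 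Applying the right-hand analogue, the coefficients are constant on the double cosets $W_I \backslash W / W_J$, so $h = \sum_D a_D T_D \in \mathcal{H}_{IJ}$.

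For $(2)$, I would first establish the descent-eigenvalue identity $T_s C_w = q C_w$ if and only if $sw < w$, and symmetrically $C_w T_{s'} = q C_w$ if and only if $ws' < w$. This is classical for Kazhdan-Lusztig bases and follows from $(2.2)$ by a direct computation with the quadratic relation; it remains valid in the extended affine setting since $I, J \subseteq S_0$ act entirely within the Coxeter part $W'$ of $W$, while the generator $\rho$ of $\Omega$ is of length zero. Combined with part $(1)$, we conclude $C_w \in \mathcal{H}_{IJ}$ if and only if $I \subseteq \mathfrak{L}(w)$ and $J \subseteq \mathfrak{R}(w)$, which by the descent characterisation of maximal-length double coset representatives is exactly the condition $w \in \mathcal{D}_{IJ}^+$. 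Linear independence of the subfamily $\{C_w\}_{w\in\mathcal{D}_{IJ}^+}$ is inherited from the full Kazhdan-Lusztig basis, and the rank count $|\mathcal{D}_{IJ}^+| = |W_I \backslash W / W_J|$ matches the $\mathcal{A}$-rank of $\mathcal{H}_{IJ}$ read off its $T_D$-basis.

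Part $(3)$ is then a direct corollary of $(2)$: each $C_w$ is bar-invariant by $(2.2)$, so $\mathcal{H}_{IJ}$ is an $\mathcal{A}$-span of bar-invariant elements and is itself bar-stable. The main obstacle is the eigenvalue identity underlying $(2)$ together with the descent characterisation of $\mathcal{D}_{IJ}^+$; both are local calculations inside the finite parabolic subgroup of $W'$ generated by $I \cup J$ and are insensitive to $\Omega$, so the transfer from the finite Coxeter case is routine, but one should confirm compatibility with the extended length function and Bruhat order on $W$ recorded at the start of Section $2$.
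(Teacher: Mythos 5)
The paper offers no proof of this lemma at all --- it simply asserts the affine analogue of Curtis's (1.9)--(1.10) --- so your write-up is filling a genuine blank, and the route you take (eigenvalue characterisation of $\mathcal{H}_{IJ}$, descent property of the $C_w$, bar-invariance of the $C_w$ for part (3)) is exactly the standard one that the citation to [C] and [Du1] is pointing at. Parts (1) and (3) are fine: the pairing $\{w,sw\}$ inside a double coset and the coefficient computation $a_w=a_{sw}$ both check out (using that $q=t^2$ is a unit in $\mathcal{A}$), and deducing (3) from bar-invariance of each $C_w$ in (2) is clean.

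There is, however, one step in (2) that does not work as stated: you argue that $\{C_w\}_{w\in\mathcal{D}_{IJ}^+}$ spans $\mathcal{H}_{IJ}$ because it is linearly independent and its cardinality equals the number of double cosets, i.e.\ the rank of $\mathcal{H}_{IJ}$. Over a commutative ring a linearly independent family of the right cardinality need not be a basis of a free module (already $\{2\}\subset\mathbb{Z}$ fails, and here the rank is moreover infinite), so ``matching rank'' proves nothing about spanning. What you actually need is the unitriangularity of the transition matrix: since $C_{w^+}\in\mathcal{H}_{IJ}$ by your eigenvalue argument, it is forced to be of the form $C_{w^+}=\sum_{z}\alpha_{z,w}\,T_{W_I z W_J}$ with $\alpha_{z,w}\neq 0$ only for $z^+\leq w^+$ and with diagonal entry $\alpha_{w,w}=t^{-l(w^+)}$, a unit of $\mathcal{A}$ (this is precisely the content of the paper's equation (3.2) and Lemma 3.1, whose proof in turn relies on the present lemma, so it should really be established here). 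A triangular matrix with unit diagonal and finitely many nonzero entries in each column is invertible over $\mathcal{A}$, which gives both independence and spanning in one stroke and makes your separate ``inherited independence'' remark unnecessary. With that replacement the proof is complete; the caveats you raise about the extended group $W=\Omega\ltimes W'$ are handled exactly as you suggest, since $I,J\subseteq S_0$ and multiplication by $T_\rho$ permutes the $T_w$ and the $C_w$ without affecting lengths.
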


Let $\Psi: \mathcal{H}_{\vartriangle}(r)\rightarrow \mathcal{H}_{\vartriangle}(r)$ be the ring homomorphism defined by $\Psi(t)=-t, \Psi(T_{x})=(-q)^{l(x)}T_{x^{-1}}^{-1}, x\in W.$ Then $\Psi^{2}=1$ and $C_{x}'=\Psi(C_{x}).$ Applying $\Psi$ to $\mathcal{H}_{IJ},$ we have the following lemma by analogy with the finite Weyl group case (see [Du1, Lemma 1.7]).
\begin{lemma}
Let $\widetilde{\mathcal{H}}_{IJ}=\Psi(\mathcal{H}_{IJ}).$ Then we have

$(1)$ $\widetilde{\mathcal{H}}_{IJ}$ can be characterized as the set $$\{h\in \mathcal{H}_{\vartriangle}(r)|~T_{s}h=-h=hT_{s'}, ~\forall s\in I, s'\in J\}.$$

$(2)$ The elements $\{C_{x}'|~x\in \mathcal{D}_{IJ}^{+}\}$ form a basis of $\widetilde{\mathcal{H}}_{IJ}.$

$(3)$ The elements $$\widetilde{T}_{D}=\sum\limits_{w\in D} (-q)^{-l(w)} T_{w},~~D\in W_{I}\backslash W/W_{J}$$
also form a basis of $\widetilde{\mathcal{H}}_{IJ}.$
\end{lemma}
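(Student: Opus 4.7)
My plan is to deduce all three parts from general properties of the involutions $\Psi$ (for parts 1 and 2) and $j$ (for part 3), together with direct translations of the defining relations of $\mathcal{H}_{IJ}$.

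For part (1), I would start with the characterization $\mathcal{H}_{IJ}=\{h\mid T_{s}h=qh=hT_{s'}\}$ from Lemma 2.1(1). Applying the ring homomorphism $\Psi$ to such a relation and using $\Psi(q)=\Psi(t^{2})=t^{2}=q$ together with $\Psi(T_{s})=-qT_{s}^{-1}=(q-1)-T_{s}$ (via the quadratic relation $T_{s}^{2}=(q-1)T_{s}+q$), the identity $T_{s}h=qh$ becomes $((q-1)-T_{s})\Psi(h)=q\Psi(h)$, which simplifies to $T_{s}\Psi(h)=-\Psi(h)$. The right-module condition transforms analogously. Since $\Psi^{2}=1$, this gives the claimed characterization of $\widetilde{\mathcal{H}}_{IJ}=\Psi(\mathcal{H}_{IJ})$.

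Part (2) is then immediate: $\Psi$ is a ring automorphism which is $\mathcal{A}$-semilinear via $t\mapsto -t$, hence bijectively carries any $\mathcal{A}$-basis to an $\mathcal{A}$-basis. Applied to the basis $\{C_{w}\mid w\in\mathcal{D}_{IJ}^{+}\}$ of $\mathcal{H}_{IJ}$ from Lemma 2.1(2), it produces $\{\Psi(C_{w})=C_{w}'\mid w\in\mathcal{D}_{IJ}^{+}\}$, which is therefore an $\mathcal{A}$-basis of $\widetilde{\mathcal{H}}_{IJ}$.

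For part (3) I would invoke the involution $j$ introduced earlier, for which $j(T_{w})=(-q)^{-l(w)}T_{w}$ and $j(t)=t^{-1}$. Direct substitution gives $j(T_{D})=\widetilde{T}_{D}$. It remains to check that $j(\mathcal{H}_{IJ})=\widetilde{\mathcal{H}}_{IJ}$: applying $j$ to $T_{s}h=qh$ and using $j(T_{s})=-q^{-1}T_{s}$, $j(q)=q^{-1}$ gives $-q^{-1}T_{s}j(h)=q^{-1}j(h)$, i.e.\ $T_{s}j(h)=-j(h)$, so $j(h)\in\widetilde{\mathcal{H}}_{IJ}$ by part (1); the opposite inclusion follows from $j^{2}=1$. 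Since $\{T_{D}\}$ is an $\mathcal{A}$-basis of $\mathcal{H}_{IJ}$ (pairwise disjoint supports in the standard basis make them $\mathcal{A}$-linearly independent, and they span $\mathcal{H}_{IJ}$ by definition), the bijective $\mathcal{A}$-semilinear map $j$ sends it to the $\mathcal{A}$-basis $\{\widetilde{T}_{D}\}$ of $\widetilde{\mathcal{H}}_{IJ}$. The main obstacle is purely bookkeeping: tracking signs and powers of $q$ through $\Psi$ and $j$, and remembering that neither map is $\mathcal{A}$-linear — using $j$ is preferable here to computing $\Psi(T_{D})$ directly, since $\Psi(T_{w})$ involves $T_{w^{-1}}^{-1}$ and the resulting expansion of $\Psi(T_{D})$ is not supported on $\{T_{w}\mid w\in D\}$.
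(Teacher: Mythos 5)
Your proof is correct and follows exactly the route the paper intends: the paper gives no argument for this lemma (it only cites the analogy with [Du1, Lemma 1.7]), and transporting the characterization and the bases of $\mathcal{H}_{IJ}$ from Lemma 2.1 through the semilinear ring automorphisms $\Psi$ (for parts (1) and (2), using $\Psi(T_{s})=(q-1)-T_{s}$, $\Psi(q)=q$, $\Psi(C_{w})=C_{w}'$ and $\Psi^{2}=1$) and $j$ (for part (3), using $j(T_{s})=-q^{-1}T_{s}$, $j(q)=q^{-1}$, $j(T_{D})=\widetilde{T}_{D}$ and $j^{2}=1$) is precisely the standard argument. Your closing remark is also well taken: $j$ is the right map for part (3) because it preserves the support of the standard-basis expansion, whereas $\Psi(T_{D})$ does not stay supported on $\{T_{w}\mid w\in D\}$.
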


We remark that $x\in \mathcal{D}_{IK}^{+}$ if and only if $I\subseteq \mathfrak{L}(x)$ and $K\subseteq \mathfrak{R}(x).$ We have the following lemma by analogy with finte Weyl group case (see [Du2, Lemma 1.4.1]).
\begin{lemma}
Let $\mathcal{J}_{IK}$ be the additive subgroup of $\mathcal{J}_{\mathbb{Z}}$ generated by basis elements $\{t_{w}|~w\in \mathcal{D}_{IK}^{+}\}.$ Then we have

$(a)$ $\mathcal{J}_{KL}\mathcal{J}_{IJ}\subseteq \mathcal{J}_{KJ}$ for any subsets $I, J, K, L$ of $S_{0}.$ In particular, $\mathcal{J}_{KK}$ is a subring of $\mathcal{J}_{\mathbb{Z}}$ with the identity element $\sum_{d\in \textbf{D}\cap \mathcal{D}_{KK}^{+}} t_{d}.$

$(b)$ $\mathcal{J}_{IK}$ is a $\mathcal{J}_{II}$-$\mathcal{J}_{KK}$-bimodule.

$(c)$ $\mathrm{Hom}$$_{\mathcal{J}_{\mathbb{Z}}}(\mathcal{J}_{K\emptyset}, \mathcal{J}_{I\emptyset})$ is free of rank $|\mathcal{D}_{IK}|.$
\end{lemma}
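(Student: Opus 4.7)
The plan is to exploit the constraints on when the structure constants $\gamma_{x,y,z}$ are nonzero. By 2.3(c), $\gamma_{x,y,z}\neq 0$ implies $x\sim_{R} z$ and $y\sim_{L} z$, and by 2.3(a) these force $\mathfrak{L}(z)=\mathfrak{L}(x)$ and $\mathfrak{R}(z)=\mathfrak{R}(y)$. Combined with the characterization $w\in\mathcal{D}_{IK}^{+}$ if and only if $I\subseteq\mathfrak{L}(w)$ and $K\subseteq\mathfrak{R}(w)$ recalled just before the statement, part (a) becomes immediate: for $t_{x}\in\mathcal{J}_{KL}$ and $t_{y}\in\mathcal{J}_{IJ}$, every $z$ with $\gamma_{x,y,z}\neq 0$ satisfies $K\subseteq\mathfrak{L}(x)=\mathfrak{L}(z)$ and $J\subseteq\mathfrak{R}(y)=\mathfrak{R}(z)$, so $t_{x}t_{y}\in\mathcal{J}_{KJ}$.

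To identify $\mathbf{1}_{K}:=\sum_{d\in\textbf{D}\cap\mathcal{D}_{KK}^{+}}t_{d}$ as the identity of the subring $\mathcal{J}_{KK}$, I will compare it with the global identity $\sum_{d\in\textbf{D}}t_{d}$ of $\mathcal{J}_{\mathbb{Z}}$ and show the missing terms annihilate $\mathcal{J}_{KK}$. Concretely, for $x\in\mathcal{D}_{KK}^{+}$ and $d\in\textbf{D}\setminus\mathcal{D}_{KK}^{+}$, if $\gamma_{d,x,z}\neq 0$ then by 2.3(c) we have $d\sim_{L}x^{-1}$, giving $\mathfrak{R}(d)=\mathfrak{R}(x^{-1})=\mathfrak{L}(x)\supseteq K$; together with $d=d^{-1}$ from 2.2(b) this yields $\mathfrak{L}(d)=\mathfrak{R}(d)\supseteq K$, i.e.\ $d\in\mathcal{D}_{KK}^{+}$, contradicting the assumption. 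Hence $t_{d}t_{x}=0$ and $\mathbf{1}_{K}t_{x}=t_{x}$; the right identity is symmetric. Part (b) is then a specialization of (a), with associativity inherited from $\mathcal{J}_{\mathbb{Z}}$ and the two identity actions verified by the same vanishing argument.

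For (c), I will first recognize $\mathcal{J}_{K\emptyset}=\mathbf{1}_{K}\mathcal{J}_{\mathbb{Z}}$ as right $\mathcal{J}_{\mathbb{Z}}$-modules: the inclusion $\supseteq$ follows from (a), and for $w\in\mathcal{D}_{K\emptyset}^{+}$ the identity $\mathbf{1}_{K}t_{w}=t_{w}$, proved by the same method using only $K\subseteq\mathfrak{L}(w)$, yields the reverse inclusion. Then the standard idempotent calculation $\mathrm{Hom}_{R}(eR,fR)\cong fRe$ via $\varphi\mapsto\varphi(e)$, applied to $R=\mathcal{J}_{\mathbb{Z}}$, $e=\mathbf{1}_{K}$, $f=\mathbf{1}_{I}$, gives $\mathrm{Hom}_{\mathcal{J}_{\mathbb{Z}}}(\mathcal{J}_{K\emptyset},\mathcal{J}_{I\emptyset})\cong\mathbf{1}_{I}\mathcal{J}_{\mathbb{Z}}\mathbf{1}_{K}$. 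A final application of (a) places this space inside $\mathcal{J}_{IK}$, while the left/right identity properties from (b) give the reverse inclusion; since $\mathcal{J}_{IK}$ is $\mathbb{Z}$-free on the basis $\{t_{w}\}_{w\in\mathcal{D}_{IK}^{+}}$ by definition, the rank is $|\mathcal{D}_{IK}^{+}|=|\mathcal{D}_{IK}|$.

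The main point requiring care is coordinating the descent-set conditions with the involution $\gamma$-identities so that $\mathbf{1}_{K}$ behaves as a true idempotent; the decisive input throughout is $d^{2}=1$ from 2.2(b), which symmetrizes left and right descent sets for distinguished involutions and is exactly what forces $d\notin\mathcal{D}_{KK}^{+}$ to annihilate $\mathcal{J}_{KK}$ on both sides.
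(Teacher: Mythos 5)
Your argument is correct. For parts (a) and (b) it coincides with the paper's proof, which simply cites 2.3 (a), (c) and the descent-set characterization of $\mathcal{D}_{IK}^{+}$; your explicit verification that $\mathbf{1}_{K}=\sum_{d\in\textbf{D}\cap\mathcal{D}_{KK}^{+}}t_{d}$ is a two-sided identity, via $d=d^{-1}$ from 2.2 (b) and the vanishing $t_{d}t_{x}=0$ for $d\in\textbf{D}\setminus\mathcal{D}_{KK}^{+}$, is exactly the content the paper leaves implicit. For (c) the two routes differ in packaging though not in substance. The paper writes $\mathcal{J}_{K\emptyset}=t_{K}\mathcal{J}_{\mathbb{Z}}=\sum_{d}t_{d}\mathcal{J}_{\mathbb{Z}}$ as a sum of right-cell modules and then exhibits, for each $w\in\mathcal{D}_{IK}^{+}$, an explicit homomorphism $\varphi_{w}$ (essentially left multiplication by $t_{w}$, supported on the right cell of the distinguished involution $d_{w}$ attached to $w$), asserting without detail that these maps are linearly independent and span. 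You instead invoke the standard isomorphism $\mathrm{Hom}_{R}(eR,fR)\cong fRe$ for the idempotents $e=\mathbf{1}_{K}$, $f=\mathbf{1}_{I}$ in the unital ring $R=\mathcal{J}_{\mathbb{Z}}$, and compute $\mathbf{1}_{I}\mathcal{J}_{\mathbb{Z}}\mathbf{1}_{K}=\mathcal{J}_{IK}$ from (a) and the identity properties in (b). The two arguments are equivalent --- your element $t_{w}\in\mathbf{1}_{I}\mathcal{J}_{\mathbb{Z}}\mathbf{1}_{K}$ corresponds precisely to the paper's $\varphi_{w}$ under that isomorphism --- but your formulation makes the freeness and the rank count $|\mathcal{D}_{IK}^{+}|=|\mathcal{D}_{IK}|$ automatic, since $\mathcal{J}_{IK}$ is free on $\{t_{w}\}_{w\in\mathcal{D}_{IK}^{+}}$ by definition; this is a cleaner justification of the paper's ``it is easy to see.''
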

\begin{proof}
The statements (a) and (b) follows from 2.3 (a), (c) and the previous remark on $\mathcal{D}_{IK}^{+}.$

(c) Note that $\mathcal{J}_{\mathbb{Z}}=\mathcal{J}_{\emptyset\emptyset}$ and $\mathcal{D}_{K}^{+}$ is a union of right cells. So if $t_{K}=\sum_{d\in \textbf{D}\cap \mathcal{D}_{K}^{+}}t_{d},$ then we have $$\mathcal{J}_{K\emptyset}=t_{K}\mathcal{J}_{\mathbb{Z}}=\sum_{d\in \textbf{D}\cap \mathcal{D}_{K}^{+}}t_{d}\mathcal{J}_{\mathbb{Z}},$$
where $t_{d}\mathcal{J}_{\mathbb{Z}}$ is spanned by the elements $t_{x},$ $x\in \Gamma_{d}.$

For each $w\in \mathcal{D}_{IK}^{+},$ by 2.2 (c), there is a unique element $d_{w}\in \textbf{D}$ such that $\gamma_{w^{-1}, w, d_{w}}=1.$ By 2.3 (a) and (c), we have $d_{w}\in \mathcal{D}_{KK}^{+}.$ We define a $\mathcal{J}_{\mathbb{Z}}$-homomorphism $\varphi_{w}$: $\mathcal{J}_{K\emptyset}\rightarrow\mathcal{J}_{I\emptyset}$ by setting
$$\varphi_{w}(t_{x})=\begin{cases}t_{w}t_{x}& \hbox {if } t_{d_{w}}t_{x}\neq 0; \\~0& \hbox {otherwise}.\end{cases}$$
It is easy to see that the elements $\varphi_{w},$ $w\in \mathcal{D}_{IK}^{+}$ are linearly independent and form a basis for $\mathrm{Hom}$$_{\mathcal{J}_{\mathbb{Z}}}(\mathcal{J}_{K\emptyset}, \mathcal{J}_{I\emptyset}).$\end{proof}

\section{Kazhdan-Lusztig bases for affine $q$-Schur algebras}

3.1 Let $\mathbb{Z}_{\vartriangle}^{n}=\{(\lambda_{i})_{i\in \mathbb{Z}}|~\lambda_{i}\in \mathbb{Z}, \lambda_{i}=\lambda_{i-n}~\mathrm{for}~i\in \mathbb{Z}\}$ and $\mathbb{N}_{\vartriangle}^{n}=\{(\lambda_{i})_{i\in \mathbb{Z}}\in \mathbb{Z}_{\vartriangle}^{n}|~\lambda_{i}\geq 0~\mathrm{for}~i\in \mathbb{Z}\}.$ For $\lambda=(\lambda_{i})_{i\in \mathbb{Z}}\in \mathbb{Z}_{\vartriangle}^{n},$ let $\sigma(\lambda)=\sum\limits_{1\leq i\leq n}\lambda_{i}.$ For $r\geq 0,$ we set $\Lambda_{\vartriangle}(n,r)=\{\lambda\in \mathbb{N}_{\vartriangle}^{n}|~\sigma(\lambda)=r\}.$ For each $\lambda\in \Lambda_{\vartriangle}(n,r),$ it determines naturally a subset $I(\lambda)$ of $S_{0}.$ We will use the standard notations $W_{\lambda}, \mathcal{D}_{\lambda}^{\vartriangle}, \mathcal{D}_{\lambda\mu}^{\vartriangle},\ldots$ instead of $W_{I(\lambda)}, \mathcal{D}_{I(\lambda)}, \mathcal{D}_{I(\lambda)I(\mu)},\ldots$ for $W,$ thus $W_{\lambda}: =\mathfrak{S}_{(\lambda_1,\ldots,\lambda_{n})}$ is the standard Young subgroup of $\mathfrak{S}_{r}.$

For each $\lambda\in \Lambda_{\vartriangle}(n,r),$ let $x_{\lambda}=\sum_{w\in W_{\lambda}}T_{w}\in \mathcal{H}_{\vartriangle}(r)$ and $y_{\lambda}=j(x_{\lambda}),$ then the right ideal $x_{\lambda}\mathcal{H}_{\vartriangle}(r)$ of $\mathcal{H}_{\vartriangle}(r)$ has a basis $\{x_{\lambda}T_{z}|~z\in \mathcal{D}_{\lambda}^{\vartriangle}\}.$ The endomorphism algebra $$\mathcal{S}_{q}^{\vartriangle}(n,r): =\mathrm{End}_{\mathcal{H}_{\vartriangle}(r)}\Big(\bigoplus_{\lambda\in \Lambda_{\vartriangle}(n,r)} x_{\lambda}\mathcal{H}_{\vartriangle}(r)\Big)$$
is called an affine $q$-Schur algebra (see [Gr]). If we specialize $t$ to 1, then $\mathcal{S}_{1}^{\vartriangle}(n,r)$ is the affine Schur algebra over $\mathbb{Z}.$

For $\lambda, \mu\in \Lambda_{\vartriangle}(n,r)$ and $w\in \mathcal{D}_{\lambda\mu}^{\vartriangle},$ define $\phi_{\lambda,\mu}^{w}\in \mathcal{S}_{q}^{\vartriangle}(n,r)$ by $$\phi_{\lambda,\mu}^{w}(x_{\nu}h)=\delta_{\mu\nu}\sum\limits_{x\in W_{\lambda}wW_{\mu}}T_{x}h,\eqno{(3.1)}$$
where $\nu\in \Lambda_{\vartriangle}(n,r)$ and $h\in \mathcal{H}_{\vartriangle}(r).$ It has been proved in [Gr, Theorem 2.2.4] that the set $\{\phi_{\lambda,\mu}^{w}|~\lambda, \mu\in \Lambda_{\vartriangle}(n,r), w\in \mathcal{D}_{\lambda\mu}^{\vartriangle}\}$ forms a basis of $\mathcal{S}_{q}^{\vartriangle}(n,r).$ This is called the standard basis of $\mathcal{S}_{q}^{\vartriangle}(n,r).$ The set of basis elements $\{\phi_{\lambda,\mu}^{w}|~\lambda, \mu\in \Lambda_{\vartriangle}(n,r), w\in \mathfrak{S}_{r}\cap \mathcal{D}_{\lambda\mu}^{\vartriangle}\}$ spans a subalgebra of $\mathcal{S}_{q}^{\vartriangle}(n,r)$ canonically isomorphic to the $q$-Schur algebra $\mathcal{S}_{q}(n,r).$ If $n\geq r,$ let $\omega=(\ldots, 1^{r}, 0^{n-r},\ldots)\in \Lambda_{\vartriangle}(n,r),$ The set of basis elements $\{\phi_{\omega,\omega}^{w}|~ w\in W\}$ spans a subalgebra of $\mathcal{S}_{q}^{\vartriangle}(n,r)$ canonically isomorphic to the affine Hecke algebra $\mathcal{H}_{\vartriangle}(r),$ where $\phi_{\omega,\omega}^{w}$ is identified with $T_{w}$ (see [Gr, Proposition 2.2.5]).\vskip2mm

3.2 For a positive integer $n,$ let $\Theta_{\vartriangle}(n)$ be the set of all matrices $A=(a_{i,j})_{i,j\in \mathbb{Z}}$ with $a_{i,j}\in \mathbb{N}$ such that

(a) $a_{i,j}=a_{i+n,j+n}$ for $i,j\in \mathbb{Z};$

(b) for every $i\in \mathbb{Z},$ both sets $\{j\in \mathbb{Z}|~a_{i,j}\neq 0\}$ and $\{j\in \mathbb{Z}|~a_{j,i}\neq 0\}$ are finite.

For $A\in \Theta_{\vartriangle}(n)$ and $r\geq 0,$ let $\sigma(A)=\sum\limits_{1\leq i\leq n,~j\in \mathbb{Z}}a_{i,j}$ and $\Theta_{\vartriangle}(n,r)=\{A\in \Theta_{\vartriangle}(n)|~\sigma(A)=r\}.$

By [VV, 7.4] (see also [DF1]), there is a bijective map $j_{\vartriangle}: \mathcal{B}=\{(\lambda, w, \mu)|~$\\$\lambda, \mu\in \Lambda_{\vartriangle}(n,r), w\in \mathcal{D}_{\lambda\mu}^{\vartriangle}\}\rightarrow \Theta_{\vartriangle}(n,r)$ sending $(\lambda, w, \mu)$ to the matrix $A=(|R_{k}^{\lambda}\cap wR_{l}^{\mu}|)_{k,l\in \mathbb{Z}},$ where $R_{i+kn}^{\nu}=\{\nu_{k,i-1}+1, \nu_{k,i-1}+2,\ldots, \nu_{k,i-1}+\nu_{i}=\nu_{k,i}\}$ with $\nu_{k,i-1}=kr+\sum\limits_{1\leq t\leq i-1}\nu_{t}$ for all $1\leq i\leq n, k\in \mathbb{Z}$ and $\nu\in \Lambda_{\vartriangle}(n,r).$ Thus we have also obtained a bijective map $j_{\vartriangle}': \mathcal{B}'=\{(\lambda, w^{+}, \mu)|~\lambda, \mu\in \Lambda_{\vartriangle}(n,r), w^{+}\in \mathcal{D}_{\lambda\mu}^{\vartriangle,+}\}\rightarrow \Theta_{\vartriangle}(n,r),$ which has been given in [Mc]. We denote their inverses by $\sigma': \Theta_{\vartriangle}(n,r)\rightarrow \mathcal{B},$ $A\mapsto (\lambda, w_{A}, \mu)$ and $\sigma: \Theta_{\vartriangle}(n,r)\rightarrow \mathcal{B}',$ $A\mapsto (\lambda, w_{A}^{+}, \mu),$ where $\lambda=ro(A)=(\sum a_{1,i},\ldots, \sum a_{n,i})$ and $\mu=co(A)=(\sum a_{i,1},\ldots, \sum a_{i,n}).$ We will not distinguish between them. We will also denote by $\sigma(A)=w_{A}^{+}$ if $A$ corresponds to $(\lambda, w_{A}^{+}, \mu)$ in the following and hope that it doesn't cause any confusion.

Recall from 2.5 that for $w\in \mathcal{D}_{\lambda\mu}^{\vartriangle},$ $C_{w^{+}}\in \mathcal{H}_{\lambda\mu}.$ Write $$C_{w^{+}}=\sum\limits_{z\in \mathcal{D}_{\lambda\mu}^{\vartriangle}} \alpha_{z,w} T_{W_{\lambda}zW_{\mu}}.\eqno{(3.2)}$$

The following lemma gives the explicit expression of $\alpha_{z,w},$ which implies that the sum occurring above is in fact a finite sum.
\begin{lemma}
For each $w\in \mathcal{D}_{\lambda\mu}^{\vartriangle},$ if $$C_{w^{+}}=\sum\limits_{z\in \mathcal{D}_{\lambda\mu}^{\vartriangle}} \alpha_{z,w} T_{W_{\lambda}zW_{\mu}},$$
then $z^{+}\leq w^{+}$ and $\alpha_{z, w}=t^{-l(w^{+})}P_{z^{+}, w^{+}}.$ Therefore the degree in $t$ of $\alpha_{z,w}$ is at most $-l(z^{+})-1.$
\end{lemma}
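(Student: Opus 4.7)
The plan is to combine the explicit formula (2.2) for $C_{w^+}$ with the double-coset description of $\mathcal{H}_{\lambda\mu}$ from Lemma 2.1. Since $w \in \mathcal{D}_{\lambda\mu}^{\vartriangle}$, the element $w^+$ lies in $\mathcal{D}_{\lambda\mu}^{\vartriangle,+}$, so by Lemma 2.1(2) we have $C_{w^+} \in \mathcal{H}_{\lambda\mu}$; hence $C_{w^+}$ really does admit an expansion $\sum_z \alpha_{z,w} T_{W_\lambda z W_\mu}$ as in the statement, and the task reduces to identifying the scalars $\alpha_{z,w}$.

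First, I would read off the coefficient of a fixed $T_y$ in two different ways. From (2.2) it equals $t^{-l(w^+)} P_{y, w^+}(q)$, with the convention $P_{y, w^+} = 0$ unless $y \leq w^+$. From the double-coset expansion, and because each $T_y$ appears in precisely one $T_{W_\lambda z W_\mu}$ with coefficient $1$, this coefficient equals $\alpha_{z,w}$ for the unique $z \in \mathcal{D}_{\lambda\mu}^{\vartriangle}$ with $y \in W_\lambda z W_\mu$. Equating the two shows in particular that the polynomial $P_{y,w^+}$ is constant as $y$ varies over a fixed double coset; evaluating at the maximum-length representative $y = z^+$ gives
\[ \alpha_{z,w} = t^{-l(w^+)} P_{z^+, w^+}(q), \]
and the non-vanishing of $P_{z^+,w^+}$ forces $z^+ \leq w^+$.

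For the degree statement, I would then invoke the standard bound $\deg_q P_{z^+, w^+} \leq \tfrac{1}{2}(l(w^+) - l(z^+) - 1)$ recorded just after (2.1) for $z^+ < w^+$; converting to $t$-degree this gives $l(w^+) - l(z^+) - 1$, and the prefactor $t^{-l(w^+)}$ then yields $\deg_t \alpha_{z,w} \leq -l(z^+) - 1$. I do not anticipate a serious obstacle: the only genuinely nontrivial point is the observation that $P_{y,w^+}$ is constant on each $W_\lambda$-$W_\mu$ double coset, and this is forced automatically by $C_{w^+}$ belonging to $\mathcal{H}_{\lambda\mu}$, so the whole argument is really a matter of matching coefficients.
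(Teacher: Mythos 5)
Your proposal is correct and follows exactly the route the paper intends: its proof of this lemma is the one-line remark that it "follows from the expression of $C_{w^{+}}$ in (2.2) and Lemma 2.1," and your coefficient-matching argument (using that $C_{w^{+}}\in\mathcal{H}_{\lambda\mu}$ forces $P_{y,w^{+}}$ to be constant on each double coset, then evaluating at $y=z^{+}$ and applying the degree bound on Kazhdan--Lusztig polynomials) is precisely the filled-in version of that remark. The only caveat, which you implicitly handle by restricting to $z^{+}<w^{+}$, is that the final degree bound $-l(z^{+})-1$ fails for the leading term $z=w$, where $\alpha_{w,w}=t^{-l(w^{+})}$ has degree exactly $-l(w^{+})$.
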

\begin{proof}This follows from the expression of $C_{w^{+}}$ in (2.2) and Lemma 2.1.\end{proof}

We define for $B=(\lambda, w_{B}, \mu)\in \mathcal{B},$$$\theta_{B}=t^{l(w_{0,\mu})}\sum\limits_{z\in \mathcal{D}_{\lambda\mu}^{\vartriangle}} \alpha_{z,w_{B}} \phi_{\lambda,\mu}^{z},\eqno{(3.3)}$$
where $w_{0,\mu}$ is the longest element of $W_{\mu}.$

The following theorem can be proved by analogy with the $q$-Schur algebra case (see [Du1, Theorem 2.3]).
\begin{theorem}
$(1)$ The elements $\{\theta_{B}\}_{B\in \mathcal{B}}$ form a basis of $\mathcal{S}_{q}^{\vartriangle}(n,r).$

$(2)$ If $B=(\lambda,w_{B},\mu)\in \mathcal{B}$ and regarding $\theta_{B}$ as a map from $x_{\mu}\mathcal{H}_{\vartriangle}(r)$ to $x_{\lambda}\mathcal{H}_{\vartriangle}(r)$, then we have $\theta_{B}(C_{w_{0,\mu}})=C_{w_{B}^{+}}.$
\end{theorem}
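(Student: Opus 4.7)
The plan is to prove (2) by direct unpacking of the definitions, and then to deduce (1) from a triangularity argument on the coefficients $\alpha_{z,w_B}$ supplied by Lemma 3.1.

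For (2), I would start from the classical fact that for the longest element $w_{0,\mu}$ of the finite parabolic subgroup $W_\mu$ one has $P_{y,w_{0,\mu}}=1$ for every $y\leq w_{0,\mu}$; substituting into (2.2) yields the identity $C_{w_{0,\mu}}=t^{-l(w_{0,\mu})}x_{\mu}$. Next, I would feed $x_\mu$ into the definition (3.3) of $\theta_B$: by (3.1), each $\phi_{\lambda,\mu}^{z}(x_\mu)$ equals $T_{W_\lambda z W_\mu}=\sum_{x\in W_\lambda z W_\mu}T_x$. Collecting factors gives
\[
\theta_B(x_\mu)=t^{l(w_{0,\mu})}\sum_{z\in \mathcal{D}_{\lambda\mu}^{\vartriangle}}\alpha_{z,w_B}\,T_{W_\lambda zW_\mu}=t^{l(w_{0,\mu})}\,C_{w_B^+},
\]
where the second equality is exactly the expansion (3.2). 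Combining with the identity $C_{w_{0,\mu}}=t^{-l(w_{0,\mu})}x_{\mu}$ gives $\theta_B(C_{w_{0,\mu}})=C_{w_B^+}$.

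For (1), I would put on the index set $\mathcal{B}$ the partial order pulled back from the Bruhat order on $\mathcal{D}_{\lambda\mu}^{\vartriangle,+}$ via the bijection $z\mapsto z^+$. Lemma 3.1 asserts that $\alpha_{z,w_B}\neq 0$ forces $z^+\leq w_B^+$ and that $\alpha_{w_B,w_B}=t^{-l(w_B^+)}$. Hence the transition matrix expressing the family $\{\theta_B\}_{B\in\mathcal{B}}$ in terms of the standard basis $\{\phi_{\lambda,\mu}^{w}\}$ is block-diagonal with respect to the pair $(\lambda,\mu)$ and, within each such block, triangular with diagonal entries $t^{l(w_{0,\mu})-l(w_B^+)}$, which are units in $\mathcal{A}$. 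Since $\{\phi_{\lambda,\mu}^{w}\}$ is an $\mathcal{A}$-basis of $\mathcal{S}_q^\vartriangle(n,r)$, this forces $\{\theta_B\}_{B\in\mathcal{B}}$ to be an $\mathcal{A}$-basis as well.

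The main obstacle I anticipate is bookkeeping between $\mathcal{D}_{\lambda\mu}^{\vartriangle}$ and $\mathcal{D}_{\lambda\mu}^{\vartriangle,+}$ and verifying that the Bruhat order behaves well under the bijection $z\mapsto z^+$ (so that the triangularity statement is unambiguous), particularly since the Bruhat order is extended from $W'$ to the extended group $W$. One should also observe that although the sum in (3.2) is indexed by $\mathcal{D}_{\lambda\mu}^{\vartriangle}$, Lemma 3.1 forces it to be finite, so (3.3) genuinely defines an element of $\mathcal{S}_q^\vartriangle(n,r)$. With these compatibilities in place, the remainder is the transparent manipulation sketched above, mirroring [Du1, Theorem 2.3].
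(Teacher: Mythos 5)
Your argument for (2) is exactly the paper's: both rest on $C_{w_{0,\mu}}=t^{-l(w_{0,\mu})}x_{\mu}$ (valid since $P_{y,w_{0,\mu}}=1$ and $\{y\le w_{0,\mu}\}=W_{\mu}$), on $\phi_{\lambda,\mu}^{z}(x_{\mu})=T_{W_{\lambda}zW_{\mu}}$ from (3.1), and on recognizing the resulting sum as $C_{w_{B}^{+}}$ via (3.2). For (1) you take a slightly different route. The paper first identifies $\phi_{\lambda}\mathcal{S}_{q}^{\vartriangle}(n,r)\phi_{\mu}$ with $\mathrm{Hom}_{\mathcal{H}_{\vartriangle}(r)}(x_{\mu}\mathcal{H}_{\vartriangle}(r),x_{\lambda}\mathcal{H}_{\vartriangle}(r))\cong\mathcal{H}_{\lambda\mu}$ via $\phi\mapsto\phi(x_{\mu})$, under which $\sum_{z}\alpha_{z,w}\phi_{\lambda,\mu}^{z}$ corresponds to $C_{w^{+}}$, and then simply quotes Lemma 2.1(2) that $\{C_{w^{+}}\mid w\in\mathcal{D}_{\lambda\mu}^{\vartriangle,+}\}$ is a basis of $\mathcal{H}_{\lambda\mu}$; you instead run the unitriangularity argument directly on the Schur algebra side, using Lemma 3.1 to see that the transition matrix from $\{\theta_{B}\}$ to $\{\phi_{\lambda,\mu}^{w}\}$ is block-diagonal in $(\lambda,\mu)$ and triangular with unit diagonal $t^{l(w_{0,\mu})-l(w_{B}^{+})}$ in each block. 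The two arguments carry the same content (your triangularity is precisely what underlies Lemma 2.1(2)), and yours is self-contained at the cost of having to handle the infinite index set explicitly; the one point that genuinely needs saying there, which you correctly flag, is that Bruhat intervals $\{z^{+}\le w^{+}\}$ are finite, so the triangular system can be inverted by induction along the well-founded order $z\mapsto z^{+}$ pulled back from Bruhat order. With that remark in place your proof is complete and correct.
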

\begin{proof}
$(1)$ Let $\phi_{\lambda}=\phi_{\lambda,\lambda}^{1}.$ Then $\phi_{\lambda}$ is an idempotent of $\mathcal{S}_{q}^{\vartriangle}(n,r)$ and $$\mathcal{S}_{q}^{\vartriangle}(n,r)=\bigoplus_{\lambda, \mu\in \Lambda_{\vartriangle}(n,r)}\phi_{\lambda}\mathcal{S}_{q}^{\vartriangle}(n,r)\phi_{\mu}.$$
For $\lambda, \mu\in \Lambda_{\vartriangle}(n,r),$ it is obvious that we have $$\phi_{\lambda}\mathcal{S}_{q}^{\vartriangle}(n,r)\phi_{\mu}\cong \mathrm{Hom}_{\mathcal{H}_{\vartriangle}(r)}(x_{\mu}\mathcal{H}_{\vartriangle}(r), x_{\lambda}\mathcal{H}_{\vartriangle}(r)),$$
and $\phi_{\lambda,\mu}^{w}, w\in \mathcal{D}_{\lambda\mu}^{\vartriangle}$ form a basis of $\phi_{\lambda}\mathcal{S}_{q}^{\vartriangle}(n,r)\phi_{\mu}.$ Now the map $$\mathrm{Hom}_{\mathcal{H}_{\vartriangle}(r)}(x_{\mu}\mathcal{H}_{\vartriangle}(r), x_{\lambda}\mathcal{H}_{\vartriangle}(r))\rightarrow \mathcal{H}_{\lambda\mu}$$ given by sending $\phi_{\lambda,\mu}^{w}$ to $\phi_{\lambda,\mu}^{w}(x_{\mu})$ is an $\mathcal{A}$-module isomorphism by 2.5 and (3.1). Therefore, we obtain that the elements $$\sum\limits_{z\in \mathcal{D}_{\lambda\mu}^{\vartriangle}} \alpha_{z,w} \phi_{\lambda,\mu}^{z},~~w\in \mathcal{D}_{\lambda\mu}^{\vartriangle}$$ form a basis of $\mathrm{Hom}_{\mathcal{H}_{\vartriangle}(r)}(x_{\mu}\mathcal{H}_{\vartriangle}(r), x_{\lambda}\mathcal{H}_{\vartriangle}(r))$ by Lemma 2.1 and (3.2). Therefore, $\{\theta_{B}\}_{B\in \mathcal{B}}$ is a basis of $\mathcal{S}_{q}^{\vartriangle}(n,r).$

(2) Note that from (2.2) we have $C_{w_{0,\mu}}=t^{-l(w_{0,\mu})}x_{\mu}.$ Thus we have\vskip1mm
$\theta_{B}(C_{w_{0,\mu}})=t^{-l(w_{0,\mu})}\theta_{B}(x_{\mu})=\sum\limits_{z\in \mathcal{D}_{\lambda\mu}^{\vartriangle}} \alpha_{z,w_{B}} \phi_{\lambda,\mu}^{z}(x_{\mu})=C_{w_{B}^{+}}.$
\end{proof}

3.3 Since $\Psi(q)=q$ and $\Psi(T_{x})=(-q)^{l(x)}T_{x^{-1}}^{-1}, x\in W,$ $\Psi$ is a $\mathbb{Z}[q,q^{-1}]$-algebra automorphism of $\mathcal{H}_{\vartriangle}(r).$ Thus, by analogy with the $q$-Schur algebra case, $\Psi$ induces an algebra isomorphism from $\mathcal{S}_{q}^{\vartriangle}(n,r)$ to the algebra $$\widetilde{\mathcal{S}}_{q}^{\vartriangle}(n,r): =\mathrm{End}_{\mathcal{H}_{\vartriangle}(r)}\Big(\bigoplus_{\lambda\in \Lambda_{\vartriangle}(n,r)} y_{\lambda}\mathcal{H}_{\vartriangle}(r)\Big).$$
Moreover, $\widetilde{\mathcal{S}}_{q}^{\vartriangle}(n,r)$ had standard basis elements $\psi_{\lambda,\mu}^{w}, \forall \lambda, \mu\in \Lambda_{\vartriangle}(n,r), w\in \mathcal{D}_{\lambda\mu}^{\vartriangle},$ where $$\psi_{\lambda,\mu}^{w}(y_{\nu}h)=\delta_{\mu\nu}\sum\limits_{x\in W_{\lambda}wW_{\mu}}(-q)^{-l(x)}T_{x}h,$$
for all $h\in \mathcal{H}_{\vartriangle}(r),$ $\nu\in \Lambda_{\vartriangle}(n,r).$

As in (3.3), we define for each $B=(\lambda, w_{B}, \mu)\in \mathcal{B},$$$\widetilde{\theta}_{B}=(-t)^{-l(w_{0,\mu})}\sum\limits_{z\in \mathcal{D}_{\lambda\mu}^{\vartriangle}} \widetilde{\alpha}_{z,w_{B}} \psi_{\lambda,\mu}^{z},$$
where $\widetilde{\alpha}_{z,w_{B}}$ is defined by $$C_{w_{B}^{+}}'=\sum\limits_{z\in \mathcal{D}_{\lambda\mu}^{\vartriangle}} \widetilde{\alpha}_{z,w_{B}} \widetilde{T}_{W_{\lambda}zW_{\mu}}.$$
This definition makes sense by (2.1) and Lemma 2.2 (3).

By analogy with the $q$-Schur algebra case (see [Du1, Theorem 2.6]), we have the following theorem.
\begin{theorem}
Maintain the above notation.

$(1)$ The elements $\{\widetilde{\theta}_{B}\}_{B\in \mathcal{B}}$ form a basis of $\widetilde{\mathcal{S}}_{q}^{\vartriangle}(n,r).$

$(2)$ For each $B=(\lambda, w_{B}, \mu)\in \mathcal{B},$ $\widetilde{\theta}_{B}(C_{w_{0,\mu}}')=C_{w_{B}^{+}}'.$

$(3)$ $\Psi(\theta_{B})=\widetilde{\theta}_{B}, \forall B\in \mathcal{B}.$
\end{theorem}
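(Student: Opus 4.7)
The plan is to mirror the argument of Theorem~3.2 for parts~$(1)$ and $(2)$, and then to deduce part~$(3)$ by exploiting the concrete description of $\Psi$ at the Schur-algebra level. The preliminary observation that drives everything is the identity $C_{w_{0,\mu}}'=(-t)^{l(w_{0,\mu})}y_{\mu}$, which one reads off from $(2.1)$ using $P_{y,w_{0,\mu}}=1$ for every $y\in W_{\mu}$ together with $(-q)^{-l(y)}=(-1)^{l(y)}t^{-2l(y)}$. This is the exact counterpart of the identity $C_{w_{0,\mu}}=t^{-l(w_{0,\mu})}x_{\mu}$ used in the proof of Theorem~3.2~$(2)$, and it already justifies the scalar $(-t)^{-l(w_{0,\mu})}$ in the definition of $\widetilde{\theta}_{B}$.

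For part~$(1)$, I would repeat the idempotent decomposition argument: setting $\widetilde{\phi}_{\lambda}=\psi_{\lambda,\lambda}^{1}$, one gets $\widetilde{\mathcal{S}}_{q}^{\vartriangle}(n,r)=\bigoplus_{\lambda,\mu}\widetilde{\phi}_{\lambda}\widetilde{\mathcal{S}}_{q}^{\vartriangle}(n,r)\widetilde{\phi}_{\mu}$, each summand identifies with $\mathrm{Hom}_{\mathcal{H}_{\vartriangle}(r)}(y_{\mu}\mathcal{H}_{\vartriangle}(r),y_{\lambda}\mathcal{H}_{\vartriangle}(r))$, and the evaluation map $\psi_{\lambda,\mu}^{w}\mapsto\psi_{\lambda,\mu}^{w}(y_{\mu})=\widetilde{T}_{W_{\lambda}wW_{\mu}}$ is an $\mathcal{A}$-module isomorphism onto $\widetilde{\mathcal{H}}_{\lambda\mu}$ by Lemma~2.2. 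Since $\{C_{w^{+}}'\mid w\in\mathcal{D}_{\lambda\mu}^{\vartriangle}\}$ is another basis of $\widetilde{\mathcal{H}}_{\lambda\mu}$ by Lemma~2.2~$(2)$, and the coefficients $\widetilde{\alpha}_{z,w_{B}}$ are by definition those of $C_{w_{B}^{+}}'$ in the $\widetilde{T}$-basis, the preimages $\sum_{z}\widetilde{\alpha}_{z,w_{B}}\psi_{\lambda,\mu}^{z}$ form a basis of the Hom-space; rescaling by the unit $(-t)^{-l(w_{0,\mu})}$ then yields $(1)$. Part~$(2)$ follows in one line: substituting $C_{w_{0,\mu}}'=(-t)^{l(w_{0,\mu})}y_{\mu}$ gives
\[
\widetilde{\theta}_{B}(C_{w_{0,\mu}}')=\sum_{z}\widetilde{\alpha}_{z,w_{B}}\psi_{\lambda,\mu}^{z}(y_{\mu})=\sum_{z}\widetilde{\alpha}_{z,w_{B}}\widetilde{T}_{W_{\lambda}zW_{\mu}}=C_{w_{B}^{+}}'.
\]

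For part~$(3)$, my plan is to realize the isomorphism $\Psi:\mathcal{S}_{q}^{\vartriangle}(n,r)\to\widetilde{\mathcal{S}}_{q}^{\vartriangle}(n,r)$ as conjugation: since $\Psi$ is an involutive algebra automorphism of $\mathcal{H}_{\vartriangle}(r)$ and restricts to an isomorphism $x_{\mu}\mathcal{H}_{\vartriangle}(r)\to y_{\mu}\mathcal{H}_{\vartriangle}(r)$ of $\Psi$-twisted right modules, one has $\Psi(\theta)=\Psi\circ\theta\circ\Psi$ on $\bigoplus_{\mu}y_{\mu}\mathcal{H}_{\vartriangle}(r)$. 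Applying $\Psi$ to the identity $C_{w_{0,\mu}}=t^{-l(w_{0,\mu})}x_{\mu}$ yields $\Psi(x_{\mu})=q^{l(w_{0,\mu})}y_{\mu}$, hence $\Psi(y_{\mu})=q^{-l(w_{0,\mu})}x_{\mu}$; together with Theorem~3.2~$(2)$ (which gives $\theta_{B}(x_{\mu})=t^{l(w_{0,\mu})}C_{w_{B}^{+}}$) and $\Psi(C_{w_{B}^{+}})=C_{w_{B}^{+}}'$, one computes
\[
\Psi(\theta_{B})(y_{\mu})=\Psi\bigl(\theta_{B}(q^{-l(w_{0,\mu})}x_{\mu})\bigr)=q^{-l(w_{0,\mu})}(-t)^{l(w_{0,\mu})}C_{w_{B}^{+}}'=(-t)^{-l(w_{0,\mu})}C_{w_{B}^{+}}',
\]
which agrees with $\widetilde{\theta}_{B}(y_{\mu})$ by part~$(2)$. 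Since $\theta_{B}\in\phi_{\lambda}\mathcal{S}_{q}^{\vartriangle}(n,r)\phi_{\mu}$ annihilates $x_{\nu}$ for $\nu\neq\mu$, both $\Psi(\theta_{B})$ and $\widetilde{\theta}_{B}$ annihilate $y_{\nu}$ for $\nu\neq\mu$, so they agree on every module generator and must coincide. The main obstacle I anticipate is in part~$(3)$: one must pin down the precise functorial description of $\Psi$ at the Schur-algebra level and carry the scalar $q^{l(w_{0,\mu})}$ correctly through the identifications; everything else is bookkeeping of scalars.
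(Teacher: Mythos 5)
Your proposal is correct and follows essentially the same route as the paper: parts (1) and (2) are proved by the same idempotent/evaluation argument as in Theorem 3.1, resting on the identity $C_{w_{0,\mu}}'=(-t)^{l(w_{0,\mu})}y_{\mu}$ and Lemma 2.2, and part (3) rests on the same key facts $\Psi(x_{\mu})=q^{l(w_{0,\mu})}y_{\mu}$, $\Psi(C_{w_{B}^{+}})=C_{w_{B}^{+}}'$, and comparison of the two maps on the generators $y_{\nu}$. The only cosmetic difference is that in (3) you package the induced map as $\Psi\circ\theta_{B}\circ\Psi$ and invoke Theorem 3.1(2) directly, whereas the paper expands $\theta_{B}$ in the standard basis and applies $\Psi$ termwise; the two computations are interchangeable.
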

\begin{proof}
The proof of (1) and (2) is similar to that of Theorem 3.1. We now prove (3).

Since $C_{w_{0,\mu}}'=(-t)^{l(w_{0,\mu})}y_{\mu}$ by (2.1). We have $$\Psi(x_{\mu})=\Psi(t^{l(w_{0,\mu})}C_{w_{0,\mu}})=(-t)^{l(w_{0,\mu})}C_{w_{0,\mu}}'=q^{l(w_{0,\mu})}y_{\mu}.$$
On the other hand, if $\phi_{\lambda,\mu}^{w}(x_{\mu})=h_{w}x_{\mu}$ for some $h_{w}\in \mathcal{H}_{\vartriangle}(r),$ then by definition we have $\Psi(\phi_{\lambda,\mu}^{w})(y_{\mu})=\Psi(h_{w})y_{\mu}.$ Thus, for $B=(\lambda, w_{B}, \mu)\in \mathcal{B},$ we have
\begin{align*}
\Psi(\theta_{B})(C_{w_{0,\mu}}')&=(-t)^{l(w_{0,\mu})}\sum\limits_{z\in \mathcal{D}_{\lambda\mu}^{\vartriangle}} \Psi(\alpha_{z,w_{B}}) \Psi(\phi_{\lambda,\mu}^{z})(C_{w_{0,\mu}}')\\
&=(-t)^{l(w_{0,\mu})}\sum\limits_{z\in \mathcal{D}_{\lambda\mu}^{\vartriangle}} \Psi(\alpha_{z,w_{B}}) \Psi(h_{z})C_{w_{0,\mu}}'\\
&=(-t)^{2l(w_{0,\mu})}\sum\limits_{z\in \mathcal{D}_{\lambda\mu}^{\vartriangle}} \Psi(\alpha_{z,w_{B}}) \Psi(h_{z})y_{\mu}\end{align*}
\begin{align*}
&=\sum\limits_{z\in \mathcal{D}_{\lambda\mu}^{\vartriangle}} \Psi(\alpha_{z,w_{B}}) \Psi(h_{z})\Psi(x_{\mu})\\
&=\Psi(C_{w_{B}^{+}})=C_{w_{B}^{+}}'=\widetilde{\theta}_{B}(C_{w_{0,\mu}}').
\end{align*}
Therefore, we have $\Psi(\theta_{B})=\widetilde{\theta}_{B}.$
\end{proof}

We call $\{\theta_{B}\}_{B\in \mathcal{B}}$ and $\{\widetilde{\theta}_{B}\}_{B\in \mathcal{B}}$ the Kazhdan-Lusztig basis of the affine $q$-Schur algebras.\vskip2mm

3.4 For simplicity, we denote $\phi_{\lambda,\mu}^{w_{B}}$ by $\phi_{B},$ where $B=(\lambda, w_{B}, \mu)\in \mathcal{B}.$ We first extend the involution $-$ on $\mathcal{H}_{\vartriangle}(r)$ to the affine $q$-Schur algebra $\mathcal{S}_{q}^{\vartriangle}(n,r)$ following [Du1, DF2]. For any $B=(\lambda, w_{B}, \mu)\in \mathcal{B},$ we have $\phi_{B}(C_{w_{0,\mu}})\in \mathcal{H}_{\lambda\mu},$ and hence $\overline{\phi_{B}(C_{w_{0,\mu}})}\in \mathcal{H}_{\lambda\mu}$ by Lemma 2.1 (3). We now define a map $-: \mathcal{S}_{q}^{\vartriangle}(n,r)\rightarrow \mathcal{S}_{q}^{\vartriangle}(n,r)$ such that $$\overline{t}=t^{-1},~~~~\overline{\phi_{B}}(C_{w_{0,\mu}}h)=\overline{\phi_{B}(C_{w_{0,\mu}})}h~~\forall h\in \mathcal{H}_{\vartriangle}(r).$$

By analogy with the $q$-Schur algebra case, we have the following proposition.
\begin{proposition}
$(1)$ The map $-$ is an algebra involution.

$(2)$ If $n\geq r,$ then the restriction of $-$ on $\mathcal{H}_{\vartriangle}(r)$ as a subalgebra of $\mathcal{S}_{q}^{\vartriangle}(n,r)$ coincides with the involution $-$ on $\mathcal{H}_{\vartriangle}(r).$

$(3)$ $\overline{\theta_{B}}=\theta_{B}$ for all $B\in \mathcal{B}.$
\end{proposition}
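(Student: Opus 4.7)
The plan is to address parts (1)--(3) in order, each time reducing the verification to a computation on the cyclic generator $C_{w_{0,\mu}}$ of the right $\mathcal{H}_{\vartriangle}(r)$-module $x_{\mu}\mathcal{H}_{\vartriangle}(r)$, with Lemma~2.1 (especially part (3)) as the main technical input.

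For part (1), I first verify that the prescription $\overline{\phi_{B}}(C_{w_{0,\mu}}h)=\overline{\phi_{B}(C_{w_{0,\mu}})}\,h$ defines a right $\mathcal{H}_{\vartriangle}(r)$-module homomorphism $x_{\mu}\mathcal{H}_{\vartriangle}(r)\to x_{\lambda}\mathcal{H}_{\vartriangle}(r)$. Because $x_{\mu}\mathcal{H}_{\vartriangle}(r)=C_{w_{0,\mu}}\mathcal{H}_{\vartriangle}(r)$ is cyclic, this reduces to showing that the right annihilator of $x_{\mu}$---the right ideal generated by $T_{s}-q$ for $s\in I(\mu)$---kills $\overline{\phi_{B}(C_{w_{0,\mu}})}$. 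Since $\phi_{B}(C_{w_{0,\mu}})$ is, up to a power of $t$, an element of $\mathcal{H}_{\lambda\mu}$, and Lemma~2.1(3) gives $\overline{\mathcal{H}_{\lambda\mu}}=\mathcal{H}_{\lambda\mu}$, the right characterization in Lemma~2.1(1) supplies the needed vanishing $\overline{\phi_{B}(C_{w_{0,\mu}})}(T_{s}-q)=0$. Extending $\mathcal{A}$-semilinearly via $\overline{t}=t^{-1}$ then yields a well-defined endomorphism of $\mathcal{S}_{q}^{\vartriangle}(n,r)$. That $\overline{-}$ squares to the identity follows at once by evaluating at $C_{w_{0,\mu}}$: $\overline{\overline{\phi_{B}}}(C_{w_{0,\mu}})=\overline{\overline{\phi_{B}(C_{w_{0,\mu}})}}=\phi_{B}(C_{w_{0,\mu}})$, and cyclicity promotes this to $\overline{\overline{\phi_{B}}}=\phi_{B}$. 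For multiplicativity $\overline{\xi\eta}=\overline{\xi}\,\overline{\eta}$, I reduce by semilinearity to basis elements and evaluate at $C_{w_{0,\nu}}$ (with $\nu$ the source of $\phi_{C}$): writing $\phi_{C}(C_{w_{0,\nu}})=C_{w_{0,\mu}}h$, both $\overline{\phi_{B}\phi_{C}}(C_{w_{0,\nu}})$ and $(\overline{\phi_{B}}\,\overline{\phi_{C}})(C_{w_{0,\nu}})$ simplify to $\overline{\phi_{B}(C_{w_{0,\mu}})}\,\overline{h}$, using that the Hecke bar is a ring involution.

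For part (2), when $n\geq r$ the subgroup $W_{\omega}$ is trivial, so $w_{0,\omega}=1$ and $C_{w_{0,\omega}}=1$. Under the subalgebra embedding $T_{w}\leftrightarrow\phi_{\omega,\omega}^{w}$ recalled in Section~3.1, one has $\phi_{\omega,\omega}^{w}(1)=T_{w}$, hence $\overline{\phi_{\omega,\omega}^{w}}(1)=\overline{T_{w}}$, matching the Hecke involution on the nose. For part (3), Theorem~3.1(2) gives $\theta_{B}(C_{w_{0,\mu}})=C_{w_{B}^{+}}$. The identity $\overline{\xi}(C_{w_{0,\mu}})=\overline{\xi(C_{w_{0,\mu}})}$, immediate from the defining formula on basis elements and semilinearity, together with bar-invariance of the Kazhdan--Lusztig basis in (2.2), gives
\[
\overline{\theta_{B}}(C_{w_{0,\mu}})=\overline{\theta_{B}(C_{w_{0,\mu}})}=\overline{C_{w_{B}^{+}}}=C_{w_{B}^{+}}=\theta_{B}(C_{w_{0,\mu}}).
\]
Since $\overline{\theta_{B}}$ and $\theta_{B}$ both vanish on $x_{\nu}\mathcal{H}_{\vartriangle}(r)$ for $\nu\neq\mu$ and agree on the cyclic generator of $x_{\mu}\mathcal{H}_{\vartriangle}(r)$, they coincide in $\mathcal{S}_{q}^{\vartriangle}(n,r)$.

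The main obstacle is the well-definedness step in part (1): explicitly identifying $\mathrm{Ann}_{r}(x_{\mu})$ as the right ideal generated by $\{T_{s}-q:s\in I(\mu)\}$. This is standard in the finite case and transfers to the extended affine setting by comparing bases of the quotient $\mathcal{H}_{\vartriangle}(r)/\sum_{s}(T_{s}-q)\mathcal{H}_{\vartriangle}(r)$ and $x_{\mu}\mathcal{H}_{\vartriangle}(r)$ indexed by $\mathcal{D}_{\mu}^{\vartriangle}$, but deserves to be highlighted rather than glossed over. Once this foundational point is in place, parts (2) and (3) become essentially immediate specializations of the universal identity $\overline{\xi}(C_{w_{0,\mu}})=\overline{\xi(C_{w_{0,\mu}})}$ together with bar-invariance of the Kazhdan--Lusztig basis of the affine Hecke algebra.
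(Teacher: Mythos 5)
Your proof is correct and follows exactly the route the paper intends but leaves unwritten (it only cites the analogy with [Du1]): reduce every verification to evaluation at the cyclic generator $C_{w_{0,\mu}}$ of $x_{\mu}\mathcal{H}_{\vartriangle}(r)$, with Lemma~2.1(1),(3) supplying the well-definedness via $\overline{\mathcal{H}_{\lambda\mu}}=\mathcal{H}_{\lambda\mu}$ and the identification of $\mathrm{Ann}_{r}(x_{\mu})$ with the right ideal generated by $T_{s}-q$, $s\in I(\mu)$. You are also right to flag that last annihilator computation as the one point genuinely needing the basis comparison over $\mathcal{D}_{\mu}^{\vartriangle}$ in the extended affine setting; the rest is the universal identity $\overline{\xi}(C_{w_{0,\mu}})=\overline{\xi(C_{w_{0,\mu}})}$ plus bar-invariance of $C_{w_{B}^{+}}$, as you say.
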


Recall that for each $B=(\lambda, w_{B}, \mu)\in \mathcal{B},$ $B$ uniquely determines the element $\sigma(B)=w_{B}^{+}\in \mathcal{D}_{\lambda\mu}^{\vartriangle,+}.$ For any $A,B\in \mathcal{B},$ we define elements $g_{A,B,C}\in \mathcal{A}$ by $$\theta_{A}\theta_{B}=\sum\limits_{C\in \mathcal{B}}g_{A,B,C} \theta_{C}.$$
The following result shows that the structure constants for $\mathcal{S}_{q}^{\vartriangle}(n,r)$ are determined by that for $\mathcal{H}_{\vartriangle}(r)$ (see also [Mc, Lemma 2.2]).
\begin{proposition}
For any $A, B, C\in \mathcal{B},$ we have $g_{A,B,C}\neq 0$ only if $co(A)=ro(B)$ and $(ro(A), co(B))=(ro(C), co(C)).$ In this case, there exists a Laurent polynomial $h_{A,B}\in \mathcal{A}$ such that $h_{A,B}\cdot g_{A,B,C}=h_{\sigma(A),\sigma(B),\sigma(C)}.$
\end{proposition}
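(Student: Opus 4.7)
My plan is to handle the vanishing conditions first via the block structure of $\mathcal{S}_q^\vartriangle(n,r)$, and then derive the scaling identity by evaluating both sides of the product formula on the distinguished vector $C_{w_{0,\nu}}$ and exploiting the absorption property of $\mathcal{H}_{\mu\nu}$.

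For the vanishing, recall from the proof of Theorem 3.1 the idempotent decomposition $\mathcal{S}_q^\vartriangle(n,r) = \bigoplus_{\lambda,\mu} \phi_\lambda \mathcal{S}_q^\vartriangle(n,r) \phi_\mu$; formula (3.3) writes $\theta_B$ as a combination of the $\phi_{ro(B),co(B)}^z$, so $\theta_B \in \phi_{ro(B)} \mathcal{S}_q^\vartriangle(n,r) \phi_{co(B)}$. Orthogonality of the idempotents $\phi_\lambda$ then forces $\theta_A \theta_B = 0$ unless $co(A) = ro(B)$, and in the nonzero case the product lies in $\phi_{ro(A)} \mathcal{S}_q^\vartriangle(n,r) \phi_{co(B)}$, so $g_{A,B,C} \neq 0$ requires $(ro(C), co(C)) = (ro(A), co(B))$.

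For the nontrivial case, set $\lambda = ro(A)$, $\mu = co(A) = ro(B)$, $\nu = co(B)$. Evaluating $\theta_A \theta_B = \sum_C g_{A,B,C} \theta_C$ at $C_{w_{0,\nu}}$ and applying Theorem 3.1(2) twice yields $\theta_A(C_{w_B^+}) = \sum_C g_{A,B,C}\, C_{w_C^+}$. Since $C_{w_B^+} \in \mathcal{H}_{\mu\nu}$, Lemma 2.1(1) together with induction on length gives $T_w C_{w_B^+} = q^{l(w)} C_{w_B^+}$ for all $w \in W_\mu$, whence $x_\mu C_{w_B^+} = [W_\mu]_q C_{w_B^+}$, where $[W_\mu]_q := \sum_{w \in W_\mu} q^{l(w)}$. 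Because $\theta_A$ is a right $\mathcal{H}_\vartriangle(r)$-module map and $\theta_A(x_\mu) = t^{l(w_{0,\mu})} C_{w_A^+}$ by Theorem 3.1(2), we obtain
\[
[W_\mu]_q \, \theta_A(C_{w_B^+}) \;=\; \theta_A(x_\mu) \cdot C_{w_B^+} \;=\; t^{l(w_{0,\mu})}\, C_{w_A^+} C_{w_B^+} \;=\; t^{l(w_{0,\mu})} \sum_z h_{w_A^+, w_B^+, z}\, C_z.
\]
Both sides lie in $\mathcal{H}_{\lambda\nu}$, and by Lemma 2.1(2) only $C_{w^+}$ with $w \in \mathcal{D}_{\lambda\nu}^\vartriangle$ can appear in the expansion; comparing the coefficient of $C_{w_C^+}$ in the linearly independent family $\{C_z\}_{z \in W}$ then gives
\[
[W_\mu]_q \, g_{A,B,C} \;=\; t^{l(w_{0,\mu})}\, h_{\sigma(A),\sigma(B),\sigma(C)}.
\]
Setting $h_{A,B} := t^{-l(w_{0,\mu})}[W_\mu]_q = \sum_{w \in W_\mu} t^{2l(w)-l(w_{0,\mu})} \in \mathcal{A}$ proves the proposition.

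The key conceptual step is the absorption identity $x_\mu C_{w_B^+} = [W_\mu]_q C_{w_B^+}$, which lets one trade the application of the endomorphism $\theta_A$ for multiplication in the Hecke algebra by $C_{w_A^+}$; once this is in hand, the rest is bookkeeping of the normalization factor. The one subtle point is verifying that $h_{A,B}$ actually lies in $\mathcal{A}$ (rather than merely in the fraction field $\mathbf{A}$), and this uses that $W_\mu$ is a finite standard Young subgroup, so that $[W_\mu]_q$ is a genuine polynomial of degree $l(w_{0,\mu})$ in $q = t^2$, symmetrically divisible by $t^{l(w_{0,\mu})}$ in the Laurent sense.
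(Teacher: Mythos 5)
Your proof is correct and follows essentially the same route as the paper: both arguments hinge on the absorption identity $C_{w_{0,\mu}}C_{w_B^+}=h_\mu C_{w_B^+}$ (your $x_\mu C_{w_B^+}=[W_\mu]_q C_{w_B^+}$ is just its rescaling via $x_\mu=t^{l(w_{0,\mu})}C_{w_{0,\mu}}$) combined with the right-$\mathcal{H}_{\vartriangle}(r)$-module property of $\theta_A$, and both land on the same $h_{A,B}=t^{-l(w_{0,\mu})}\sum_{w\in W_\mu}t^{2l(w)}$. The only cosmetic difference is that you clear denominators instead of working with $h_\mu^{-1}$ in the fraction field.
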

\begin{proof}
Let $A=(\lambda, w_{A}, \mu),$ $B=(\nu,w_{B},\rho).$ The first claim follows from the definition. We now assume $\mu=\nu.$ According to (2.2) and Lemma 2.1 (1), we have $$C_{w_{0,\mu}}C_{w_{B}^{+}}=h_{\mu}C_{w_{B}^{+}},~~~\mathrm{where}~~h_{\mu}=t^{-l(w_{0,\mu})}\sum\limits_{w\in W_{\mu}}t^{2l(w)}.$$ Now we have
\begin{align*}
\sum\limits_{C\in \mathcal{B}}g_{A,B,C} C_{\sigma(C)}&=\theta_{A}\theta_{B}(C_{w_{0, \rho}})=\theta_{A}(C_{\sigma(B)})=\theta_{A}(h_{\mu}^{-1}C_{w_{0,\mu}}C_{\sigma(B)})\end{align*}
\begin{align*}
&=h_{\mu}^{-1}\theta_{A}(C_{w_{0,\mu}})C_{\sigma(B)}=h_{\mu}^{-1}C_{\sigma(A)}C_{\sigma(B)}\\
&=\sum\limits_{C\in \mathcal{B}}h_{\mu}^{-1}h_{\sigma(A),\sigma(B),\sigma(C)}C_{\sigma(C)}.
\end{align*}
Therefore, let $h_{A,B}=h_{\mu},$ then we have $h_{A,B}\cdot g_{A,B,C}=h_{\sigma(A),\sigma(B),\sigma(C)}$ for all $C\in \mathcal{B}.$
\end{proof}

For each $B=(\lambda, w_{B}, \mu)\in \mathcal{B},$ we define $$\widehat{\phi}_{B}=t^{-l(w_{B}^{+})+l(w_{0,\mu})}\phi_{B}.$$

By Lemma 3.1 and (3.3), we have
\begin{align*}
\hspace*{3cm}\theta_{B}&=t^{l(w_{0,\mu})}\sum\limits_{z\in \mathcal{D}_{\lambda\mu}^{\vartriangle},~z^{+}\leq w^{+}} t^{-l(w_{B}^{+})}P_{z^{+}, w^{+}}\phi_{(\lambda,z,\mu)}~~~~~~~~~~~~(3.4)\\
\hspace*{3cm}&=\sum\limits_{z\in \mathcal{D}_{\lambda\mu}^{\vartriangle},~z^{+}\leq w^{+}}t^{-l(w_{B}^{+})+l(z^{+})}P_{z^{+}, w^{+}}\widehat{\phi}_{(\lambda,z,\mu)}. ~~~~~~~~~~~~~(3.5)
\end{align*}

Therefore, we get $$\theta_{B}\in \widehat{\phi}_{B}+t^{-1}\sum\limits_{z\in \mathcal{D}_{\lambda\mu}^{\vartriangle},~z^{+}\leq w^{+}}\mathcal{A}^{-}\widehat{\phi}_{(\lambda,z,\mu)}.$$

Thus, we have $\{\theta_{B}\}_{B\in \mathcal{B}}$ is an IC basis (see [Du1, Definition 3.7]). Note that (3.4) has been given in [Gr, Definition 2.4.3].

\section{Asymptotic forms for affine $q$-Schur algebras}

4.1 Recall that in Proposition 3.2 we have established the relationships between the structure constants for $\mathcal{S}_{q}^{\vartriangle}(n,r)$ and $\mathcal{H}_{\vartriangle}(r).$ We extend the $\textbf{a}$-function on $\mathcal{H}_{\vartriangle}(r)$ to $\Theta_{\vartriangle}(n,r)$ by letting $\textbf{a}(A)$: $=\textbf{a}(\sigma(A))$ for all $A\in \Theta_{\vartriangle}(n,r),$ and extend the finite set $\textbf{D}$ of distinguished involutions in $W$ to the set $\textbf{D}_{\vartriangle}(n,r)=\{A\in \Theta_{\vartriangle}(n,r)|~ro(A)=co(A) ~\mathrm{and}~ \sigma(A)\in \textbf{D}\}$ (see [Mc, Lemma 3.7]). Note that the $\textbf{a}$-function defined here is different from that in [Mc, Proposition 3.8 (4)]. We denote by $\Theta_{\vartriangle}(n,r)_{\lambda\mu}=\{B\in \Theta_{\vartriangle}(n,r)|~ro(B)=\lambda, co(B)=\mu\}.$

Put $$\gamma_{A,B,C}=\begin{cases}\gamma_{\sigma(A),\sigma(B),\sigma(C)}& \hbox {if } g_{A,B,C}\neq 0; \\~~~~~~~ 0& \hbox {otherwise},\end{cases}$$
and let $\mathcal{J}_{\vartriangle}(n,r)$ be a free abelian group with basis $\{t_{A}|~A\in \Theta_{\vartriangle}(n,r)\}.$ We define a multiplication on $\mathcal{J}_{\vartriangle}(n,r)$ by setting (see [Mc, (4.7)])$$t_{A}t_{B}=\sum\limits_{C}\gamma_{A,B,C}t_{C}.$$

Using the associativity of the algebra $\mathcal{J}_{\mathbb{Z}}$ and Lemma 2.3 (a), we obtain the following lemma.
\begin{lemma}
The $\mathbb{Z}$-algebra $\mathcal{J}_{\vartriangle}(n,r)$ is associative with the identity element $$\sum\limits_{\lambda\in \Lambda_{\vartriangle}(n,r)}\sum\limits_{D\in \textbf{D}_{\vartriangle}(n,r)_{\lambda}}t_{D},$$
where $\textbf{D}_{\vartriangle}(n,r)_{\lambda}=\textbf{D}_{\vartriangle}(n,r)\cap \Theta_{\vartriangle}(n,r)_{\lambda\lambda}.$
\end{lemma}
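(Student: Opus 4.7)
The plan is to reduce both associativity and the identity property in $\mathcal{J}_\vartriangle(n,r)$ to the corresponding statements for $\mathcal{J}_{\mathbb{Z}}$ (from 2.4), using Proposition 3.2 to match the matrix structure constants $\gamma_{A,B,C}$ with the Weyl-group constants $\gamma_{\sigma(A),\sigma(B),\sigma(C)}$, together with the descent-set constraints of Lemma 2.3 (a) and (c) to translate sums over $w\in W$ into sums over matrices.

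For associativity, I would first note that by Proposition 3.2, the product $t_At_B$ vanishes unless $co(A)=ro(B)$, so both $(t_At_B)t_C$ and $t_A(t_Bt_C)$ vanish unless $co(A)=ro(B)$ and $co(B)=ro(C)$; assume these hold. Comparing coefficients of $t_E$ reduces the problem to verifying
\[
\sum_{D}\gamma_{\sigma(A),\sigma(B),\sigma(D)}\,\gamma_{\sigma(D),\sigma(C),\sigma(E)}=\sum_{D'}\gamma_{\sigma(A),\sigma(D'),\sigma(E)}\,\gamma_{\sigma(B),\sigma(C),\sigma(D')}.
\]
Associativity in $\mathcal{J}_{\mathbb{Z}}$ yields exactly this identity with $\sigma(D)$, $\sigma(D')$ replaced by arbitrary $w\in W$. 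The key observation is that only $w\in\mathcal{D}_{ro(A),co(B)}^{\vartriangle,+}$ contribute, and these are in bijection, via $\sigma$, with matrices $D\in\Theta_{\vartriangle}(n,r)_{ro(A),co(B)}$: if $\gamma_{\sigma(A),\sigma(B),w}\neq 0$, Lemma 2.3(c) gives $\sigma(A)\sim_R w$ and $\sigma(B)\sim_L w$, and then Lemma 2.3(a) forces $I(ro(A))\subseteq\mathfrak{L}(\sigma(A))\subseteq\mathfrak{L}(w)$ and $I(co(B))\subseteq\mathfrak{R}(\sigma(B))\subseteq\mathfrak{R}(w)$, so by the remark preceding Lemma 2.3, $w\in\mathcal{D}_{ro(A),co(B)}^{\vartriangle,+}$. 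The symmetric analysis handles the right-hand side, and associativity in $\mathcal{J}_\vartriangle(n,r)$ follows.

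For the identity element $e=\sum_\lambda\sum_{D\in\textbf{D}_\vartriangle(n,r)_\lambda}t_D$, I would show $e\cdot t_A=t_A$, with the right-sided statement being entirely analogous. By Proposition 3.2 only $D$ with $co(D)=ro(A)=:\lambda$ contribute, so in the expansion $e\cdot t_A=\sum_{D,E}\gamma_{D,A,E}\,t_E$ only $D\in\textbf{D}_\vartriangle(n,r)_\lambda$ can survive and any surviving $E$ must lie in $\Theta_\vartriangle(n,r)_{\lambda,co(A)}$, so $\sigma(E)\in\mathcal{D}_{\lambda,co(A)}^{\vartriangle,+}$. What remains is to verify
\[
\sum_{d\in\mathcal{D}_{\lambda\lambda}^{\vartriangle,+}\cap\textbf{D}}\gamma_{d,\sigma(A),\sigma(E)}=\delta_{\sigma(E),\sigma(A)}.
\]
Since $\sum_{d\in\textbf{D}}t_d$ is the identity of $\mathcal{J}_{\mathbb{Z}}$, the right-hand side equals the analogous sum taken over all $d\in\textbf{D}$; so it suffices to show every $d\in\textbf{D}$ with $\gamma_{d,\sigma(A),\sigma(E)}\neq 0$ automatically lies in $\mathcal{D}_{\lambda\lambda}^{\vartriangle,+}$. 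By Lemma 2.3(c), such $d$ satisfies $d\sim_L\sigma(A)^{-1}$ and $d\sim_R\sigma(E)$, whence by Lemma 2.3(a), $\mathfrak{R}(d)=\mathfrak{R}(\sigma(A)^{-1})=\mathfrak{L}(\sigma(A))\supseteq I(\lambda)$ and $\mathfrak{L}(d)=\mathfrak{L}(\sigma(E))\supseteq I(\lambda)$, i.e. $d\in\mathcal{D}_{\lambda\lambda}^{\vartriangle,+}$.

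The main obstacle is precisely this truncation step: one must ensure that restricting the sums from $W$ to the maximal-length coset representatives $\mathcal{D}_{\lambda\mu}^{\vartriangle,+}$ (equivalently, to $\Theta_\vartriangle(n,r)$) discards no nonzero contributions. Once the descent-set bookkeeping via Lemma 2.3(a),(c) is in place, associativity and the identity property descend directly from $\mathcal{J}_{\mathbb{Z}}$; no new computation with Kazhdan-Lusztig polynomials or the $h_{A,B}$ normalizations of Proposition 3.2 is required.
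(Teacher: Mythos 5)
Your proof is correct and follows essentially the route the paper intends: reduce both claims to the associativity and identity of $\mathcal{J}_{\mathbb{Z}}$ via Proposition 3.2, and use the descent-set properties (a), (c) of subsection 2.3 together with the remark preceding Lemma 2.3 to check that truncating the $W$-sums to maximal double-coset representatives (equivalently, to $\Theta_{\vartriangle}(n,r)$) loses no nonzero terms --- this truncation is exactly the content of Lemma 2.3(a), which is what the paper cites. The only quibble is labelling: where you write ``Lemma 2.3(a)'' and ``Lemma 2.3(c)'' for the descent-set and $\gamma\neq 0$ implications, you mean the properties (a) and (c) listed in subsection 2.3, not the parts of Lemma 2.3.
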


\begin{lemma}
For $A=(\lambda,1,\mu),$ $B=(\mu,w_{B},\nu)\in \Theta_{\vartriangle}(n,r)$ with $W_{\lambda}\subset W_{\mu},$ we define $C=(\lambda,w',\nu),$ where $w_{B}^{+}\in W_{\lambda}w'W_{\nu},$ then $\theta_{A}\theta_{B}=\theta_{C}.$
\end{lemma}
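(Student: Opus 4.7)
My plan is to apply Proposition 3.2 and reduce the identity to a one-term computation in the Hecke algebra $\mathcal{H}_{\vartriangle}(r)$.

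The first thing to nail down is a geometric fact about double cosets: since $W_\lambda\subset W_\mu$ and $w'\in W_\lambda w_B^+ W_\nu\subseteq W_\mu w_B W_\nu$, the two double cosets $W_\lambda w' W_\nu$ and $W_\mu w_B W_\nu$ satisfy $W_\lambda w' W_\nu\subseteq W_\mu w_B W_\nu$. The maximal element of the larger coset is $w_B^+$, so $l((w')^+)\le l(w_B^+)$; conversely $w_B^+\in W_\lambda w' W_\nu$ forces $l(w_B^+)\le l((w')^+)$. Hence $(w')^+=w_B^+$, which will be the key geometric input.

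Next I will expand $\theta_A\theta_B=\sum_{C'}g_{A,B,C'}\theta_{C'}$. By Proposition~3.2, a nonzero $g_{A,B,C'}$ forces $ro(C')=\lambda$, $co(C')=\nu$, and $h_\mu\cdot g_{A,B,C'}=h_{w_{0,\mu},w_B^+,w_{C'}^+}$ with $h_\mu=t^{-l(w_{0,\mu})}\sum_{w\in W_\mu}t^{2l(w)}$. To compute the right-hand side, I use the identity
\[
C_{w_{0,\mu}}C_{w_B^+}=h_\mu C_{w_B^+},
\]
which was already established in the proof of Proposition~3.2 (it follows from Lemma~2.1(1) together with (2.2), using that $C_{w_B^+}\in\mathcal{H}_{\mu\nu}$ because $I(\mu)\subseteq\mathfrak{L}(w_B^+)$). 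Comparing with the defining expansion $C_{w_{0,\mu}}C_{w_B^+}=\sum_z h_{w_{0,\mu},w_B^+,z}\,C_z$ and using linear independence of the Kazhdan--Lusztig basis yields $h_{w_{0,\mu},w_B^+,z}=h_\mu\,\delta_{z,w_B^+}$.

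Plugging this back gives $h_\mu\,g_{A,B,C'}=h_\mu\,\delta_{w_{C'}^+,w_B^+}$, and since $\mathcal{A}$ is a domain we may cancel $h_\mu$ to get $g_{A,B,C'}=\delta_{w_{C'}^+,w_B^+}$. Combined with the first paragraph, the unique $C'$ satisfying both $(ro(C'),co(C'))=(\lambda,\nu)$ and $w_{C'}^+=w_B^+$ is precisely our $C=(\lambda,w',\nu)$, since distinct minimal representatives in $\mathcal{D}_{\lambda\nu}^{\vartriangle}$ produce distinct maximal representatives. Therefore $\theta_A\theta_B=\theta_C$, as required. The only genuinely nontrivial step is the double-coset comparison $(w')^+=w_B^+$; everything else is a mechanical application of Proposition~3.2 and the Hecke-algebra absorption identity already recorded in the paper.
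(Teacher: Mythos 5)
Your proof is correct and follows essentially the same route as the paper: the entire content is the absorption identity $C_{w_{0,\mu}}C_{w_B^+}=h_\mu C_{w_B^+}$, which is exactly what the paper's one-line proof invokes. You have merely filled in the bookkeeping (the reduction via Proposition 3.2 and the verification that $(w')^+=w_B^+$) that the paper leaves implicit.
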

\begin{proof}
It follows from the following identity:\vskip2mm $\hspace*{3cm}C_{w_{0,\mu}}C_{w_{B}^{+}}=t^{-l(w_{0,\mu})}\sum\limits_{w\in W_{\mu}}t^{2l(w)}C_{w_{B}^{+}}.$
\end{proof}

Analogous to the affine Hecke algebra case, we have the following theorem (see also [Mc, (4.9)]).
\begin{theorem}
The $\mathcal{A}$-module homomorphism $\Phi: \mathcal{S}_{q}^{\vartriangle}(n,r)\rightarrow \mathcal{J}_{\vartriangle}(n,r)\otimes_{\mathbb{Z}} \mathcal{A}$ defined by $$\Phi(\theta_{A})=\sum\limits_{B\in \Theta_{\vartriangle}(n,r)}\sum\limits_{\substack{D\in \textbf{D}_{\vartriangle}(n,r)_{\mu}\\ \textbf{a}(D)=\textbf{a}(B)}} g_{A,D,B} t_{B},~~\mathrm{where}~\mu=co(A)$$
is an algebra homomorphism and becomes an isomorphism when tensored with $\mathbf{A}.$
\end{theorem}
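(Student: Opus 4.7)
The plan is to transfer the analogous homomorphism statement for the affine Hecke algebra (Section 2.4) to the Schur-algebra side, exploiting the explicit comparison of structure constants in Proposition 3.2. This parallels Du's proof in [Du2] for the classical $q$-Schur algebra, and naturally splits into three steps: well-definedness of $\Phi$, the algebra-homomorphism property, and the isomorphism after tensoring with $\mathbf{A}$.

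First, well-definedness of $\Phi$ amounts to finiteness of the defining sum. By Proposition 3.2 the constant $g_{A,D,B}$ vanishes unless $\mathrm{co}(A) = \mathrm{ro}(D)$ and $(\mathrm{ro}(A), \mathrm{co}(D)) = (\mathrm{ro}(B), \mathrm{co}(B))$, which together with the finiteness of $\textbf{D}_\vartriangle(n,r)_\mu$ restricts both $D$ and $B$ to finite index sets for each fixed $A$.

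Second, I would establish the homomorphism property by expanding $\Phi(\theta_A)\Phi(\theta_B)$ and $\Phi(\theta_A\theta_B)$ in the basis $\{t_C\}$ and matching coefficients. Using Proposition 3.2 to rewrite each $g$-coefficient in the form $h_\mu^{-1} h_{\sigma(\cdot),\sigma(\cdot),\sigma(\cdot)}$, together with the definition $\gamma_{A,B,C} = \gamma_{\sigma(A),\sigma(B),\sigma(C)}$ whenever $g_{A,B,C} \neq 0$, the identity reduces to the Lusztig homomorphism $\phi$ from 2.4 applied inside $\mathcal{H}_\vartriangle(r)$. The key combinatorial point is that $D \in \textbf{D}_\vartriangle(n,r)_\mu$ corresponds exactly to $\sigma(D) \in \textbf{D} \cap \mathcal{D}_{\mu\mu}^{\vartriangle,+}$, so the inner sums over distinguished involutions align on both sides of the equation.

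Finally, for the isomorphism over $\mathbf{A}$, I would exhibit $\Phi$ as triangular in a suitable basis ordering. For each $A \in \Theta_\vartriangle(n,r)$, property 2.2(c) combined with 2.3(a),(c) singles out a unique distinguished involution $D_A$ for which the associated $\gamma$ equals one, so $g_{A,D_A,A}$ has top $t$-degree $\textbf{a}(A)$ with leading coefficient one. Ordering $\Theta_\vartriangle(n,r)$ by decreasing $\textbf{a}$-value and refining within each two-sided cell via the cell preorder, the transition matrix of $\Phi$ becomes unitriangular modulo units in $\mathbf{A}$, which gives the invertibility of $\Phi \otimes \mathbf{A}$. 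The principal obstacle is this last step: one must design the ordering cleanly enough that properties 2.3(b)--(f) prevent cancellation of leading terms off the diagonal, so that triangularity is genuine. A secondary subtlety is the bookkeeping with the Poincar\'e-type factor $h_\mu$ in step 2, which must cancel neatly across both sides of the homomorphism identity.
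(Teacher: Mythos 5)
Your proposal follows essentially the same route as the paper: reduce the homomorphism identity to Lusztig's corresponding statement for $\mathcal{H}_{\vartriangle}(r)$ (i.e.\ to [L3, 2.4(b)]) via Proposition 3.2, and prove invertibility over $\mathbf{A}$ by a leading-coefficient argument. Two small points of divergence are worth recording. First, the paper also verifies that $\Phi$ preserves the unit, computing $\Phi(\theta_{\lambda})=\sum_{D\in \textbf{D}_{\vartriangle}(n,r)_{\lambda}}t_{D}$ via Lemma 4.2; you should include this, since it is part of being an algebra homomorphism with identity. Second, for the last step the paper does not need (and does not establish) genuine triangularity of the transition matrix with respect to a refinement of the cell preorder inside a fixed two-sided cell --- and indeed within a two-sided cell the matrix is generally not triangular, so the obstacle you flag is real if you insist on triangularity. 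The paper instead normalizes, writing $\widehat{\Phi}(\theta_{A})=\sum_{B}\xi_{A,B}\,t^{-\textbf{a}(B)+l(w_{0,co(B)})}t_{B}$, and observes that $\xi_{A,B}-\delta_{A,B}$ consists only of strictly lower-order terms: the coefficient of $t^{\textbf{a}(B)}$ in $\sum_{D}h_{\sigma(A),\sigma(D),\sigma(B)}$ is $\sum_{D}\gamma_{\sigma(A),\sigma(D),\sigma(B)}=\delta_{A,B}$ by 2.2(b),(c),(d), while $h_{\mu}$ contributes leading coefficient $1$. So the matrix is the identity plus lower-order terms and its determinant is nonzero in $\mathbf{A}$; no ordering within cells is required, only the block structure coming from the $\textbf{a}$-function. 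With that adjustment your argument closes.
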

\begin{proof}
For any $\lambda,\mu,\nu\in \Lambda_{\vartriangle}(n,r)$ and $A\in \Theta_{\vartriangle}(n,r)_{\lambda\mu},$ $B\in \Theta_{\vartriangle}(n,r)_{\mu\nu},$ we want to prove $$\Phi(\theta_{A}\theta_{B})=\Phi(\theta_{A})\Phi(\theta_{B}).$$
This comes down to proving that $$\sum\limits_{\substack{C\in \Theta_{\vartriangle}(n,r)_{\lambda\nu}\\D\in \textbf{D}_{\vartriangle}(n,r)_{\nu}\\\textbf{a}(D)=\textbf{a}(E)}} g_{A,B,C}\cdot g_{C,D,E}=\sum\limits_{\substack{U,V\in \Theta_{\vartriangle}(n,r)\\D'\in \textbf{D}_{\vartriangle}(n,r)_{\mu}\\D''\in \textbf{D}_{\vartriangle}(n,r)_{\nu}\\\textbf{a}(D')=\textbf{a}(U)\\\textbf{a}(D'')=\textbf{a}(V)}} g_{A,D',U}\cdot g_{B,D'',V}\cdot t_{U,V,E},$$
for all $E\in \Theta_{\vartriangle}(n,r).$ By proposition 3.2 and noting that $co(C)=co(B),$ $co(E)=co(D),\ldots,$ this is reduced to an argument for affine Hecke algebra as shown in [L3, 2.4 (b)].

We next show that $\Phi$ preserves the identity element of two algebras. The identity element of $\mathcal{S}_{q}^{\vartriangle}(n,r)$ is $\sum_{\lambda\in \Lambda_{\vartriangle}(n,r)}\theta_{\lambda},$ where $\Lambda_{\vartriangle}(n,r)$ is identified with the subset of diagonal matrices in $\Theta_{\vartriangle}(n,r).$ For $\lambda\in \Lambda_{\vartriangle}(n,r),$ $D\in \textbf{D}_{\vartriangle}(n,r)_{\lambda},$ by Lemma 4.2 we have $$g_{\lambda,D,B}=\begin{cases}1& \hbox {if } D=B; \\0& \hbox {otherwise}.\end{cases}$$
This implies that $$\Phi(\theta_{\lambda})=\sum\limits_{D\in \textbf{D}_{\vartriangle}(n,r)_{\lambda}}t_{D}.$$

Finally, we show that the extended map $\widehat{\Phi}: \mathcal{S}_{q}^{\vartriangle}(n,r)\otimes_{\mathcal{A}}\mathbf{A}\rightarrow \mathcal{J}_{\vartriangle}(n,r)\otimes_{\mathbb{Z}} \mathbf{A}$ is an isomorphism. Write$$\widehat{\Phi}(\theta_{A})=\sum\limits_{B\in \Theta_{\vartriangle}(n,r)}\xi_{A,B} t^{-\textbf{a}(B)+l(w_{0,co(B)})}.$$
By the definition of $\textbf{a}$-function in 2.2 and Proposition 3.2, we have $\xi_{A,B}-\delta_{A,B}\in t\mathbf{A}.$ Therefore, the determinant of $(\xi_{A,B})$ is unequal to 0 in $\mathbf{A}.$ Consequently, $\widehat{\Phi}$ is an isomorphism.
\end{proof}

We call the algebra $\mathcal{J}_{\vartriangle}(n,r)$ the asymptotic form of $\mathcal{S}_{q}^{\vartriangle}(n,r).$\vskip2mm

4.2 Recall from Lemma 2.3 that $\mathcal{J}_{\lambda\mu}$ is the additive subgroup of $\mathcal{J}_{\mathbb{Z}}$ generated by the elements $t_{w}, w\in \mathcal{D}_{\lambda\mu}^{\vartriangle,+}.$ Let $\omega=(\ldots,1^{r},\ldots)$ and define $$T^{(r)}=\bigoplus_{\lambda\in \Lambda_{\vartriangle}(n,r)}\mathcal{J}_{\lambda\omega}.$$
Note that if $n<r,$ $\omega\notin \Lambda_{\vartriangle}(n,r)$ and we then identify $\Lambda_{\vartriangle}(n,r)$ as the subset $\{\lambda\in \Lambda_{\vartriangle}(r,r)|~\lambda_{n+1}=\cdots=\lambda_{r}=0\}$ of $\Lambda_{\vartriangle}(r,r);$ if $n\geq r,$ then $\omega$ is identified with the element $(\ldots,1^{r},0^{n-r},\ldots)$ of $\Lambda_{\vartriangle}(n,r).$ Clearly, as a $\mathbb{Z}$-module, $T^{(r)}$ is the $r$-fold tensor space $V^{\otimes r}$ over a free $\mathbb{Z}$-module $V$ of rank $n.$ Therefore, $T^{(r+r')}\cong T^{(r)}\otimes T^{(r')}$ as $\mathbb{Z}$-modules. By Lemma 2.3 (b), $T^{(r)}$ is a right $\mathcal{J}_{\mathbb{Z}}$-module which can be viewed as the asymptotic form of the $q$-tensor space $\oplus_{\lambda\in \Lambda_{\vartriangle}(n,r)} x_{\lambda}\mathcal{H}_{\vartriangle}(r).$

By analogy with the $q$-Schur algebra case, we can get the following double centralizer property.
\begin{theorem}
Maintain the previous notation. We have

$(a)$ $\mathcal{J}_{\vartriangle}(n,r)\cong \mathrm{End}_{\mathcal{J}_{\mathbb{Z}}}(T^{(r)});$

$(b)$ $\mathcal{J}_{\mathbb{Z}}\cong \mathrm{End}_{\mathcal{J}_{\vartriangle}(n,r)}(T^{(r)}),$ if $n\geq r.$
\end{theorem}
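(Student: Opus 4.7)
The plan is to imitate the $q$-Schur algebra argument [Du2, Theorem 1.4.3], using the homomorphism constructions from Lemma 2.3(c) as the bridge between the two sides.

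For part (a), I would first build a natural candidate $\Xi\colon\mathcal{J}_{\vartriangle}(n,r)\to\mathrm{End}_{\mathcal{J}_{\mathbb{Z}}}(T^{(r)})$. For each $A=(\lambda,w_A^+,\mu)\in\Theta_{\vartriangle}(n,r)$, since $I(\omega)=\emptyset$ one has $\mathcal{J}_{\mu\omega}=\mathcal{J}_{I(\mu),\emptyset}$, so $w_A^+\in\mathcal{D}_{\lambda\mu}^{\vartriangle,+}$ determines a map $\varphi_{w_A^+}\colon\mathcal{J}_{\mu\omega}\to\mathcal{J}_{\lambda\omega}$ via the formula in the proof of Lemma 2.3(c). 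Define $\Xi(t_A)$ to equal $\varphi_{w_A^+}$ on the summand $\mathcal{J}_{\mu\omega}$ and $0$ elsewhere. Injectivity then follows block-by-block from Lemma 2.3(c), which says the $\varphi_{w_A^+}$ for $A\in\Theta_{\vartriangle}(n,r)_{\lambda\mu}$ form a $\mathbb{Z}$-basis of $\mathrm{Hom}_{\mathcal{J}_{\mathbb{Z}}}(\mathcal{J}_{\mu\omega},\mathcal{J}_{\lambda\omega})$. Ranks match as well, since the bijection $j_{\vartriangle}$ of Section 3.2 identifies $\Theta_{\vartriangle}(n,r)_{\lambda\mu}$ with $\mathcal{D}_{\lambda\mu}^{\vartriangle}$, of the same cardinality as $\mathcal{D}_{\lambda\mu}^{\vartriangle,+}$. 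So once $\Xi$ is shown to be multiplicative, both injectivity and surjectivity come essentially for free.

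The main obstacle is verifying that $\Xi$ is a ring homomorphism. For $A\in\Theta_{\vartriangle}(n,r)_{\lambda\mu}$ and $B\in\Theta_{\vartriangle}(n,r)_{\mu'\nu}$, both $\Xi(t_At_B)$ and $\Xi(t_A)\Xi(t_B)$ vanish when $\mu'\neq\mu$, so the real work is the case $\mu=\mu'$. Expanding $\Xi(t_A)\Xi(t_B)(t_x)$ for $t_x\in\mathcal{J}_{\nu\omega}$ by the definition of $\varphi$ and then associativity in $\mathcal{J}_{\mathbb{Z}}$ should give $\sum_y\gamma_{w_A^+,w_B^+,y}\,t_y$ (with case distinctions), which I expect to rewrite, via the cell-theoretic constraints of 2.3(a),(c) and the descent characterization of $\mathcal{D}_{\lambda\nu}^{\vartriangle,+}$ noted in 2.5, as $\sum_C \gamma_{\sigma(A),\sigma(B),\sigma(C)}\,\varphi_{w_C^+}(t_x)=\Xi(t_At_B)(t_x)$. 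The delicate bookkeeping is matching the case-distinctions in $\varphi_{w_A^+},\varphi_{w_B^+},\varphi_{w_C^+}$ with the vanishing rules for $g_{A,B,C}$ in Proposition 3.2; this is inherited from the finite case. Preservation of the unit is then immediate from Lemma 4.1 together with the fact that $\sum_{d\in\textbf{D}\cap\mathcal{D}_{\lambda\lambda}^{+}}t_d$ is the identity of $\mathcal{J}_{\lambda\lambda}$ (Lemma 2.3(a)) and acts as the identity on the summand $\mathcal{J}_{\lambda\omega}=t_\lambda\mathcal{J}_{\mathbb{Z}}$.

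Part (b) then follows from (a) by a Morita-style double-centralizer argument. The hypothesis $n\geq r$ places $\omega=(\ldots,1^r,0^{n-r},\ldots)$ in $\Lambda_{\vartriangle}(n,r)$ with $I(\omega)=\emptyset$, so the summand $\mathcal{J}_{\omega\omega}$ of $T^{(r)}$ is all of $\mathcal{J}_{\mathbb{Z}}$ as a right module over itself. Each other summand $\mathcal{J}_{\lambda\omega}=t_\lambda\mathcal{J}_{\mathbb{Z}}$ is cut out by the idempotent $t_\lambda$, hence projective, and the total is finitely generated since $\Lambda_{\vartriangle}(n,r)$ is finite. Therefore $T^{(r)}$ is a finitely generated projective generator of right $\mathcal{J}_{\mathbb{Z}}$-modules, and the standard double-centralizer theorem combined with (a) yields
$$\mathrm{End}_{\mathcal{J}_{\vartriangle}(n,r)}(T^{(r)})\cong\mathrm{End}_{\mathrm{End}_{\mathcal{J}_{\mathbb{Z}}}(T^{(r)})}(T^{(r)})\cong\mathcal{J}_{\mathbb{Z}}.$$
Thus the entire weight of the argument rests on the associativity check in part (a); the passage from (a) to (b) is essentially formal.
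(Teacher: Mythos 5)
Your proof is correct, and part (a) follows the paper's argument exactly: the paper likewise decomposes $\mathrm{End}_{\mathcal{J}_{\mathbb{Z}}}(T^{(r)})$ into the blocks $\mathrm{Hom}_{\mathcal{J}_{\mathbb{Z}}}(\mathcal{J}_{\mu\omega},\mathcal{J}_{\lambda\omega})$, uses Lemma 2.3(c) to get the basis $\widetilde{t}_A$ acting by $\varphi_{w_A^{+}}$ on the $\mu$-summand and by $0$ elsewhere, and reduces multiplicativity to the identity $\widetilde{t}_{A}\widetilde{t}_{B}=\sum_{y}\gamma_{\sigma(A),\sigma(B),y^{+}}\widetilde{t}_{(\lambda,y,\mu)}$ — the same computation, stated at the same level of detail, that you describe. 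For part (b) you take a mildly different formal route: the paper sets $e=\sum_{D\in\textbf{D}_{\vartriangle}(n,r)_{\omega}}t_{D}$, identifies $e\mathcal{J}_{\vartriangle}(n,r)e\cong\mathcal{J}_{\mathbb{Z}}$ and $T^{(r)}\cong\mathcal{J}_{\vartriangle}(n,r)e$, and reads off $\mathrm{End}_{\mathcal{J}_{\vartriangle}(n,r)}(\mathcal{J}_{\vartriangle}(n,r)e)\cong e\mathcal{J}_{\vartriangle}(n,r)e$, whereas you observe that $T^{(r)}$ is a progenerator of right $\mathcal{J}_{\mathbb{Z}}$-modules (the $\omega$-summand being $\mathcal{J}_{\mathbb{Z}}$ itself, the others being $t_{\lambda}\mathcal{J}_{\mathbb{Z}}$ with $t_{\lambda}$ idempotent) and invoke the double centralizer theorem. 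The two arguments are dual faces of the same Morita-theoretic fact and both use $n\geq r$ only to place $\omega$ in $\Lambda_{\vartriangle}(n,r)$; yours is perhaps slightly more self-justifying in that it makes explicit why $T^{(r)}$ is balanced, while the paper's corner-algebra phrasing is more economical. Either way the substance of the theorem lies in part (a), where you and the paper coincide.
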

\begin{proof}
Since we have the following isomorphism:$$\mathrm{End}_{\mathcal{J}_{\mathbb{Z}}}(T^{(r)})\cong \bigoplus_{\lambda, \mu\in \Lambda_{\vartriangle}(n,r)} \mathrm{Hom}_{\mathcal{J}_{\mathbb{Z}}}(\mathcal{J}_{\mu\omega}, \mathcal{J}_{\lambda\omega}),$$
by Lemma 2.3 (c), this is a free $\mathbb{Z}$-module with basis $\{\widetilde{t}_{A}|~A\in \Theta_{\vartriangle}(n,r)\},$ where if $A=(\lambda,w_{A},\mu),$ then the action of $\widetilde{t}_{A}$ on $\oplus_{\nu\in \Lambda_{\vartriangle}(n,r), \nu \neq \mu}\mathcal{J}_{\nu\omega}$ is 0, and on $\mathcal{J}_{\mu\omega}$ is the same as $\varphi_{w_{A}^{+}}$ defined in the proof of Lemma 2.3.

Let $\tilde{}$: $\mathcal{J}_{\vartriangle}(n,r)\rightarrow \mathrm{End}_{\mathcal{J}_{\mathbb{Z}}}(T^{(r)})$ be the $\mathbb{Z}$-linear map such that $t_{A}\mapsto \widetilde{t}_{A}.$ We want to prove that $\tilde{}$ is an algebra homomorphism which preserves the identity elements. This can be easily shown by using the following fact: $$\widetilde{t}_{A}\widetilde{t}_{B}=\sum_{y\in \mathcal{D}_{\lambda\mu}^{\vartriangle}}\gamma_{\sigma(A), \sigma(B), y^{+}}\widetilde{t}_{(\lambda,y,\mu)},~~~\mathrm{where}~\lambda=ro(A),\mu=co(B).$$

(b) Note that $\omega\in \Lambda_{\vartriangle}(n,r)$ when $n\geq r.$ Put $e=\sum_{D\in \textbf{D}_{\vartriangle}(n,r)_{\omega}}t_{D}.$ Then $e$ is an idempotent of $\mathcal{J}_{\vartriangle}(n,r)$ and one can easily check that $e\mathcal{J}_{\vartriangle}(n,r)e\cong \mathcal{J}_{\mathbb{Z}}.$ Via this isomorphism, $\mathcal{J}_{\vartriangle}(n,r)e$ becomes a right $\mathcal{J}_{\mathbb{Z}}$-module and is clearly isomorphic to $T^{(r)}.$ Therefore, our assertion is proved.\end{proof}

4.3 The notion of based rings was introduced by Lusztig in [L6]. By definition, an associative ring $R$ with 1 which is a free abelian group with a fixed $\mathbb{Z}$-basis $B$ is called a based ring if\vskip1.5mm

(a) If $b,b'\in B,$ then $bb'=\sum_{b''\in B}n_{b''}b'',$ $n_{b''}\in \mathbb{Z}_{\geq 0}.$

(b) $1=\sum_{b\in B_{0}}b$ for some subset $B_{0}\subset B.$

(c) There exists an anti-automorphism $l$ of $R$ of order 2 satisfying $l(B)=B,$ and such that $\tau(bb')=1$ if $b'=l(b),$ and 0 otherwise. Here $\tau$: $R\rightarrow \mathbb{Z}$ is a group homomorphism defined by $\tau(b)=1$ or 0 if $b\in B_{0}$ or not.\vskip1.5mm

For $A\in \Theta_{\vartriangle}(n,r),$ let $A^{t}$ denote the transpose of $A.$ Note that if $A=(\lambda,w_{A},\mu),$ then $A^{t}=(\mu,w_{A}^{-1},\lambda).$ Thus we have by 2.2 (b), (c) and (d) (see also Proposition 5.1)

(d) $\gamma_{A,B,D}\neq 0, D\in \textbf{D}_{\vartriangle}(n,r)\Rightarrow B=A^{t}, \gamma_{A,A^{t},D}=1;$ for any $A\in \Theta_{\vartriangle}(n,r),$ there exists a unique $D\in \textbf{D}_{\vartriangle}(n,r)$ such that $\gamma_{A,A^{t},D}\neq 0;$ and $\gamma_{A,B,C}=\gamma_{B,C^{t},A^{t}}=\gamma_{C^{t},A,B^{t}}$ for all $A,B,C\in \Theta_{\vartriangle}(n,r).$

Now one can easily check that the following holds.

(e) $\mathcal{J}_{\vartriangle}(n,r)$ with the basis $B=\{t_{A}|~A\in \Theta_{\vartriangle}(n,r)\}$ is a based ring.

Here the set $B_{0}$ is $\{t_{D}|~D\in \textbf{D}_{\vartriangle}(n,r)\},$ and the involution $l$ in (c) is the map sending $t_{A}$ to $t_{A^{t}}.$\vskip2mm

4.4 As a based ring, $\mathcal{J}_{\vartriangle}(n,r)$ has a decomposition into one-sided ideals and into two-sided ideals:
$$\leqno{\mathrm{(a)}} \hspace*{1.3cm}  \mathcal{J}_{\vartriangle}(n,r)=\bigoplus_{D\in \textbf{D}_{\vartriangle}(n,r)}\mathcal{J}_{\vartriangle}(n,r)t_{D}=\bigoplus_{D\in \textbf{D}_{\vartriangle}(n,r)}t_{D}\mathcal{J}_{\vartriangle}(n,r),$$
$$\leqno{\mathrm{(b)}}  \hspace*{4.5cm} \mathcal{J}_{\vartriangle}(n,r)=\bigoplus_{i}\mathcal{J}_{\vartriangle}(n,r)_{i},$$
where $\mathcal{J}_{\vartriangle}(n,r)_{i}$ is spanned by the elements in the equivalence classes $B_{i}$ corresponding to the equivalence relation $\sim$: $b\sim b'\Leftrightarrow bcb'\neq 0$ for some $c\in B$ (see [L6, 1.1 (h)]).

Now we will recall the definition of cells for $\mathcal{S}_{q}^{\vartriangle}(n,r)$ with respect to the canonical basis $\{\theta_{A}|~A\in \Theta_{\vartriangle}(n,r)\}$ (see [Mc]). We define preorder $\preceq_{L}$ on $\Theta_{\vartriangle}(n,r)$ as follows: $A\preceq_{L} B$ if $\theta_{A}$ appears with a nonzero coefficient in the product $\theta_{C}\theta_{B}$ for some $C\in \Theta_{\vartriangle}(n,r),$ and $A\preceq_{R} B$ if $A^{t}\preceq_{L} B^{t}.$ We denote by $A\preceq_{LR} B$ if $\theta_{A}$ appears with a nonzero coefficient in the product $\theta_{C}\theta_{B}\theta_{D}$ for some $C, D\in \Theta_{\vartriangle}(n,r).$ Let $\sim_{L}, \sim_{R}$ and $\sim_{LR}$ be the associated equivalence relations on $\Theta_{\vartriangle}(n,r)$ and call the corresponding equivalence classes the left, right and two-sided cells respectively. We have by 2.4 (a) and (b)

(c) $A\sim_{L} B\Leftrightarrow t_{A}t_{B^{t}}\neq 0;$

(d) $A\sim_{LR} B\Leftrightarrow t_{A}t_{C}t_{B}\neq 0$ for some $C\in \Theta_{\vartriangle}(n,r);$

(e) every left (resp. right) cell $\Gamma$ (resp. $\Gamma'$) of $\Theta_{\vartriangle}(n,r)$ contains a unique element $D$ (resp. $D'$) of $\textbf{D}_{\vartriangle}(n,r),$ and the set $\{t_{A}|~A\in \Gamma\}$ (resp. $\{t_{A'}|~A'\in \Gamma'\}$) forms a basis for the left ideal $\mathcal{J}_{\vartriangle}(n,r)t_{D}$ (resp. $t_{D'}\mathcal{J}_{\vartriangle}(n,r)$) in (a). This follows from 4.3 (d) which implies that $t_{A}t_{D}=t_{A}$ (resp. $t_{D'}t_{A'}=t_{A'}$) for all $A\in \Gamma$ (resp. $A'\in \Gamma'$).

Therefore, the decompositions of one-sided ideals in (a) agrees with the decompositions of left or right cells for $\Theta_{\vartriangle}(n,r),$ while that of two-sided ideals in (b) agrees with the decomposition of two-sided cells.\vskip2mm

4.5 Let $\mathcal{P}_{r}$ be the set of partitions of $r$ and let $\mathcal{P}^{n}_{r}$ be the set of partitions of $r$ with at most $n$ parts. It has been shown by Lusztig, based on the work of Shi in [Shi1], that there is a bijection between the set of two-sided cells of $\mathrm{W}$ and $\mathcal{P}_{r}$ (see [L1]). In fact, this bijection is described by a map $\sigma$ from $\mathrm{W}$ to $\mathcal{P}_{r}$ and the fibers of $\sigma$ are precisely the two-sided cells of $\mathrm{W}$. Similarly, in [Mc], McGerty has defined a map $\rho$ from $\Theta_{\vartriangle}(n,r)$ to $\mathcal{P}_{r}^{n}$ and the fibers of $\rho$ are precisely the two-sided cells of $\Theta_{\vartriangle}(n,r)$ (see [Mc, Proposition 4.4]). Thus we have a bijection between the set of two-sided cells of $\Theta_{\vartriangle}(n,r)$ and $\mathcal{P}_{r}^{n}$. Given a partition $\lambda\in \mathcal{P}_{r}^{n}$, we will denote the two-sided cell $\rho^{-1}(\lambda)$ by $\mathbf{c}_{\lambda}$ in what follows. Thus, from 4.4 (b), we have $\mathcal{J}_{\vartriangle}(n,r)=\bigoplus_{\lambda\in \mathcal{P}_{r}^{n}}\mathcal{J}_{\vartriangle}(n,r)_{\mathbf{c}_{\lambda}}.$

The following proposition allows to count the number of left cells in a two-sided cell of $\mathcal{S}_{q}^{\vartriangle}(n,r).$
\begin{proposition}{\rm (see [Mc, Proposition~4.10])}

Let $\mathbf{c}_{\lambda}$ be a two-sided cell of $\mathcal{S}_{q}^{\vartriangle}(n,r)$ associated with a partition $\lambda\in \mathcal{P}^{n}_{r},$ and let $\lambda(i)=\lambda_{i}-\lambda_{i+1}.$ Then the number of left cells in $\mathbf{c}_{\lambda}$ is $N_{\lambda}=\prod_{i=1}^{n-1} \binom{n}{i}^{\lambda(i)}.$
\end{proposition}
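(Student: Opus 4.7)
The plan is to reduce the counting of left cells to a counting of distinguished involutions in $\mathbf{D}_{\vartriangle}(n,r)$, and then carry out that count using the combinatorial description of cells in the affine Weyl group.

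By 4.4(e), every left cell in $\mathbf{c}_{\lambda}$ contains a unique element of $\mathbf{D}_{\vartriangle}(n,r)$, so
$$N_{\lambda} = |\mathbf{D}_{\vartriangle}(n,r) \cap \mathbf{c}_{\lambda}|.$$
Using the bijection $\sigma$ of Section 3.2 together with the definition of $\mathbf{D}_{\vartriangle}(n,r)$, each such matrix $D$ encodes the same data as a pair $(\mu, d)$ where $\mu = ro(D) = co(D) \in \Lambda_{\vartriangle}(n,r)$ and $d = \sigma(D) \in \mathbf{D}$ is the longest representative of the double coset $W_{\mu} d W_{\mu}$, that is, $I(\mu) \subseteq \mathfrak{L}(d) \cap \mathfrak{R}(d)$. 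By [Mc, Proposition 4.4], the condition $D \in \mathbf{c}_{\lambda}$ translates to $d$ lying in the two-sided cell $c_{\lambda}$ of $W$ attached to $\lambda$. Hence $N_{\lambda}$ equals the number of such pairs $(\mu, d)$.

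Next I would invoke Shi's parameterization of cells in the extended affine Weyl group of type $\tilde{A}_{r-1}$ from [Shi1], combined with 2.3(f) which ensures that each left cell in $W$ contains exactly one distinguished involution. Shi's sign-type description of $c_{\lambda}$, refined to involutions by passing to the self-paired data in the affine Robinson–Schensted correspondence, and then cut down by the descent condition $I(\mu) \subseteq \mathfrak{L}(d) \cap \mathfrak{R}(d)$, yields a bijection between the set of pairs $(\mu, d)$ above and tuples
$$\bigl(X^{(i)}_{j}\bigr)_{1 \leq i \leq n-1,\ 1 \leq j \leq \lambda(i)},$$
where each $X^{(i)}_{j}$ is an $i$-element subset of $\{1, \ldots, n\}$. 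Counting these independently gives $\prod_{i=1}^{n-1}\binom{n}{i}^{\lambda(i)}$, which matches the claimed formula.

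The main obstacle is constructing the explicit bijection in the preceding step: Shi's sign-type description is phrased in terms of alcove geometry, which is not directly in the "layered subset" form above, so one must translate through his version of the Robinson–Schensted algorithm for $\tilde{A}_{r-1}$ and then reinterpret the admissible self-paired tabloid data as a sequence of subset choices. Geometrically, this reflects the Iwahori stratification of a Schubert cell in the affine flag variety: at each of the $\lambda(i)$ horizontal steps of height $i$ in the Young diagram of $\lambda$, one records an $i$-dimensional subspace of $\mathbb{C}^{n}$, and the choices at distinct levels are independent. The detailed verification is carried out in [Mc, Proposition 4.10]; here we would simply record the above outline and cite it.
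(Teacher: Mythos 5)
The paper does not actually prove this proposition: it is stated with a bare pointer to [Mc, Proposition~4.10], so there is no internal argument to compare yours against, and your decision to defer the combinatorial core to that same reference makes your treatment no less complete than the paper's. The reduction you add on top of the citation is correct and fits the paper's framework: by 4.4\,(e) (equivalently \textbf{Q13}) every left cell of $\Theta_{\vartriangle}(n,r)$ contains a unique element of $\textbf{D}_{\vartriangle}(n,r)$, so $N_{\lambda}=|\textbf{D}_{\vartriangle}(n,r)\cap \mathbf{c}_{\lambda}|$; unwinding the definition of $\textbf{D}_{\vartriangle}(n,r)$ and using the compatibility of $\sim_{LR}$ with $A\mapsto \sigma(A)$ turns this into a count of pairs $(\mu,d)$ with $\mu\in\Lambda_{\vartriangle}(n,r)$, $d\in\textbf{D}$ lying in the two-sided cell of $W$ attached to $\lambda$, and $I(\mu)\subseteq \mathfrak{L}(d)\cap\mathfrak{R}(d)$. (This passes the sanity checks: $\lambda=(r)$ gives $n^{r}$, agreeing with 6.1, and $\lambda=(1^{r})$ gives $\binom{n}{r}$.) The one genuine gap, which you flag yourself, is the middle step: the bijection between these pairs and tuples of $i$-element subsets of $\{1,\ldots,n\}$ is only asserted, via a vague appeal to Shi's sign types and a self-paired refinement of the affine Robinson--Schensted correspondence, and neither your sketch nor the paper constructs it --- yet it is the entire content of the formula. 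So as a standalone proof this is incomplete at exactly that point; as an annotated citation it is acceptable and in fact more informative than what the paper records, provided it is presented as an outline rather than as a proof.
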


For each $\lambda\in \mathcal{P}^{n}_{r}$ and $i\in \{1,2,\ldots,n\},$ let $\lambda(i)=\lambda_{i}-\lambda_{i+1}$ ($\lambda_{n+1}=0$) as above, and let $G_{\lambda}=\prod_{i=1}^{n}GL_{\lambda(i)}(\mathbb{C}).$ Let $\mathrm{Irr}~ G_{\lambda}$ be the set of irreducible representations of $G_{\lambda}$ and let $B_{\lambda}=R(G_{\lambda})$ be the representation ring of the algebraic group $G_{\lambda}$ with the set $\mathrm{Irr}~ G_{\lambda}$ as a $\mathbb{Z}$-basis.

Let $\textbf{D}_{\mathbf{c}_{\lambda}}=\textbf{D}_{\vartriangle}(n,r)\cap \mathbf{c}_{\lambda},$ and let $T_{\lambda}$ be the set of triples $(E_{1}, E_2, s),$ where $E_1, E_2\in \textbf{D}_{\mathbf{c}_{\lambda}}$ and $s\in \mathrm{Irr}~ G_{\lambda}.$ Let $\mathcal{T}_{\lambda}$ be a free abelian group on the set $T_{\lambda},$ and the ring structure is given as follows: $$(E_1,E_2,s)(E_1',E_2',s')=\delta_{E_2,E_1'}\sum\limits_{s''\in \mathrm{Irr}~ G_{\lambda}}c_{s,s'}^{s''}(E_1,E_2',s'')$$where $c_{s,s'}^{s''}$ is the multiplicity of $s''$ in the tensor product $s\otimes s'.$ Thus, $\mathcal{T}_{\lambda}$ is a matrix ring of rank $N_{\lambda}$ over the representation ring $B_{\lambda}.$

From 4.4 (d), we can get the following proposition by using Xi's results on affine Hecke algebras of type $A$ (see [Xi2]).

\begin{proposition}{\rm (see [Mc, Proposition~4.13])}

$(1)$ There is a ring isomorphism $\mathcal{J}_{\vartriangle}(n,r)_{\mathbf{c}_{\lambda}}\rightarrow \mathcal{T}_{\lambda}$ which restricts to a bijection between the canonical basis of $\mathcal{J}_{\vartriangle}(n,r)_{\mathbf{c}_{\lambda}}$ and $T_{\lambda}.$

$(2)$ For any $E\in \textbf{D}_{\mathbf{c}_{\lambda}},$ the subset of $\mathbf{c}_{\lambda}$ corresponding to $\{(E_1,E_2,s)\in T_{\lambda}|~E_2=E\}$ under the bijection is a left cell.

$(3)$ For any $E\in \textbf{D}_{\mathbf{c}_{\lambda}},$ the subset of $\mathbf{c}_{\lambda}$ corresponding to $\{(E_1,E_2,s)\in T_{\lambda}|~E_1=E\}$ under the bijection is a right cell.
\end{proposition}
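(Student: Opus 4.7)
The plan is to derive this structural statement from Xi's theorem on the asymptotic ring of the extended affine Hecke algebra of type $\tilde{A}_{r-1}$, by transporting the matrix ring description from the Hecke side to the $q$-Schur side via the identification of Theorem 4.2(a).

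First I would recall Xi's theorem [Xi2] in the form needed here: for each two-sided cell $\mathbf{c}$ of $W$ corresponding to a partition $\lambda\in \mathcal{P}_r$, there is an isomorphism of based rings
\begin{equation*}
\mathcal{J}_{\mathbb{Z},\mathbf{c}}\cong M_{m_\lambda}(R(G_\lambda)),
\end{equation*}
where $m_\lambda=|\textbf{D}\cap \mathbf{c}|$, under which each canonical basis element $t_w$, $w\in \mathbf{c}$, is identified with a triple $(d_1,d_2,s)$: here $d_1$ (respectively $d_2$) is the unique distinguished involution in the right (respectively left) cell of $W$ containing $w$, and $s\in \mathrm{Irr}\,G_\lambda$; multiplication is governed by the tensor-product multiplicities $c_{s,s'}^{s''}$ together with a Kronecker delta matching the middle indices.

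Next, I apply Theorem 4.2(a), which gives $\mathcal{J}_{\vartriangle}(n,r)\cong \mathrm{End}_{\mathcal{J}_{\mathbb{Z}}}(T^{(r)})$. Since $\mathcal{J}_{\mathbb{Z}}=\bigoplus_{\mathbf{c}}\mathcal{J}_{\mathbb{Z},\mathbf{c}}$ is a ring decomposition into two-sided ideals, $T^{(r)}$ splits accordingly as $\bigoplus_{\mathbf{c}}T^{(r)}_{\mathbf{c}}$, and the two-sided cell decomposition of $\mathcal{J}_{\vartriangle}(n,r)$ in 4.4(b) matches this under $\sigma$, yielding
\begin{equation*}
\mathcal{J}_{\vartriangle}(n,r)_{\mathbf{c}_\lambda}\cong \mathrm{End}_{\mathcal{J}_{\mathbb{Z},\mathbf{c}}}\bigl(T^{(r)}_{\mathbf{c}}\bigr).
\end{equation*}
Now $T^{(r)}_{\mathbf{c}}$ is a module over the matrix algebra $M_{m_\lambda}(R(G_\lambda))$, hence is isomorphic to a direct sum of $k$ copies of the standard module for some $k$. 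Using 4.4(e) together with the fact that each left cell of $\Theta_{\vartriangle}(n,r)$ inside $\mathbf{c}_\lambda$ contains a unique distinguished involution, one checks that $k=|\textbf{D}_{\mathbf{c}_\lambda}|$. This produces a based-ring isomorphism $\mathcal{J}_{\vartriangle}(n,r)_{\mathbf{c}_\lambda}\cong M_{|\textbf{D}_{\mathbf{c}_\lambda}|}(R(G_\lambda))\cong \mathcal{T}_\lambda$ carrying the canonical basis bijectively onto $T_\lambda$, which proves (1). Parts (2) and (3) are then immediate: fixing the second (respectively first) matrix index cuts out a single column (respectively row) of the matrix algebra, which is the left (respectively right) ideal generated by the idempotent $t_E$, and this coincides with the left (respectively right) cell of $\mathbf{c}_\lambda$ containing $E$ by 4.4(e).

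The main obstacle will be verifying that the multiplicity $k$ above really equals $|\textbf{D}_{\mathbf{c}_\lambda}|$, and more specifically matching the matrix indices compatibly with the based-ring structure. Under Xi's isomorphism the indices on the Hecke side are parametrized by distinguished involutions of $W$ in $\mathbf{c}$, while on the Schur side we need elements of $\textbf{D}_{\mathbf{c}_\lambda}$, i.e.\ pairs consisting of $d\in\textbf{D}\cap\mathbf{c}$ together with a composition $\mu\in\Lambda_{\vartriangle}(n,r)$ such that $d\in\mathcal{D}_{I(\mu)I(\mu)}^{\vartriangle,+}$. The counting of these data must agree with the left-cell count $N_\lambda=\prod_{i=1}^{n-1}\binom{n}{i}^{\lambda(i)}$ from Proposition 4.5. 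Once this bookkeeping is settled, the remaining assertions follow formally from the matrix ring structure and the left/right cell description in 4.4(e).
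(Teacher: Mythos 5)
Your argument is correct in outline, but it takes a genuinely different route from the paper. The paper gives essentially no proof: it states the result as a citation of [Mc, Proposition~4.13], remarking only that it follows ``from 4.4~(d) \ldots by using Xi's results'' --- i.e.\ the intended argument is to transport Xi's triple--parametrization of $\mathcal{J}_{\mathbb{Z},\mathbf{c}}$ directly to $\mathcal{J}_{\vartriangle}(n,r)_{\mathbf{c}_{\lambda}}$ via the identity of structure constants $\gamma_{A,B,C}=\gamma_{\sigma(A),\sigma(B),\sigma(C)}$ (Proposition~3.2 together with 4.3~(d) and Lemma~5.5), so that $t_{A}\mapsto(E_{1},E_{2},s)$ is read off from the triple attached to $t_{\sigma(A)}$ together with the row/column data $ro(A), co(A)$. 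You instead go through the double centralizer property $\mathcal{J}_{\vartriangle}(n,r)\cong\mathrm{End}_{\mathcal{J}_{\mathbb{Z}}}(T^{(r)})$ of Theorem~4.2~(a) and Morita theory for the matrix ring $M_{m_{\lambda}}(R(G_{\lambda}))$. Both routes rest on the same external input (Xi's theorem [Xi2]); yours has the advantage of making the matrix--ring structure conceptually transparent and of explaining why the size of the matrix ring jumps from $m_{\lambda}=|\textbf{D}\cap\mathbf{c}|$ to $N_{\lambda}=|\textbf{D}_{\mathbf{c}_{\lambda}}|$, while the paper's route gives the basis correspondence with no module-theoretic overhead.

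Two points in your write-up deserve tightening. First, the step ``$T^{(r)}_{\mathbf{c}}$ is a module over the matrix algebra $M_{m_{\lambda}}(R(G_{\lambda}))$, hence is isomorphic to a direct sum of $k$ copies of the standard module'' is not a valid inference as stated: over a matrix ring $M_{m}(R)$ with $R$ not a field, a module is $N\otimes_{R}R^{m}$ for an $R$-module $N$, and is a sum of standard modules only when $N$ is free. The conclusion is nevertheless true here, because $T^{(r)}=\bigoplus_{\mu}t_{I(\mu)}\mathcal{J}_{\mathbb{Z}}$ with $t_{I(\mu)}=\sum_{d\in\textbf{D}\cap\mathcal{D}_{I(\mu)}^{+}}t_{d}$ a sum of orthogonal idempotents, and each $t_{d}\mathcal{J}_{\mathbb{Z},\mathbf{c}}$ is a diagonal matrix-unit row, i.e.\ one copy of the standard module; this is exactly the decomposition used in the proof of Lemma~2.3~(c), and it also settles your ``main obstacle'': $k$ counts pairs $(\mu,d)$ with $d\in\textbf{D}\cap\mathbf{c}$ and $I(\mu)\subseteq\mathfrak{L}(d)=\mathfrak{R}(d)$, which is precisely $|\textbf{D}_{\mathbf{c}_{\lambda}}|$ by the description of $\textbf{D}_{\vartriangle}(n,r)$ in 4.1 and \textbf{Q13}. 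Second, for the basis statement in (1) you still need to check that the endomorphisms $\widetilde{t}_{A}$ of Theorem~4.2 go to the elements $e_{E_{1}E_{2}}\otimes s$ under the resulting isomorphism $\mathrm{End}_{M_{m_{\lambda}}(R)}\bigl((R^{m_{\lambda}})^{\oplus k}\bigr)\cong M_{k}(R)$; this is a finite verification using the formula $\widetilde{t}_{A}\widetilde{t}_{B}=\sum_{y}\gamma_{\sigma(A),\sigma(B),y^{+}}\widetilde{t}_{(\lambda,y,\mu)}$ from the proof of Theorem~4.2, but it should be written out rather than deferred.
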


\section{Lusztig's conjectures for affine $q$-Schur algebras}

In this section, we prove that the affine $q$-Schur algebra $\mathcal{S}_{q}^{\vartriangle}(n,r)$ satisfies properties very similar to $\textbf{P1}, \textbf{P2},\ldots, \textbf{P15}$ for the affine Hecke algebra $\mathcal{H}_{\vartriangle}(r).$

\begin{lemma}{\rm (see also [Cu2, Proposition~3.1 (6)])}

If $A\preceq_{L} B$ (resp. $A\preceq_{R} B$ or $A\preceq_{LR} B$), then we have $w_{A}^{+}\preceq_{L} w_{B}^{+}$ (resp. $w_{A}^{+}\preceq_{R} w_{B}^{+}$ or $w_{A}^{+}\preceq_{LR} w_{B}^{+}$).
\end{lemma}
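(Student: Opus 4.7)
The plan is to pull each preorder relation on $\Theta_{\vartriangle}(n,r)$ back to $W$ through the identity $h_{A,B}\cdot g_{A,B,C}=h_{\sigma(A),\sigma(B),\sigma(C)}$ of Proposition~3.2, where $h_{A,B}=h_{co(A)}=t^{-l(w_{0,co(A)})}\sum_{w\in W_{co(A)}}t^{2l(w)}$ is a nonzero element of $\mathcal{A}$. Consequently, whenever $co(A)=ro(B)$, one has the key equivalence $g_{A,B,C}\neq 0\Longleftrightarrow h_{\sigma(A),\sigma(B),\sigma(C)}\neq 0$, which serves as the bridge between the two preorders.

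For the $\preceq_{L}$ case, I would argue as follows: if $A\preceq_{L}B$, pick $C\in\Theta_{\vartriangle}(n,r)$ with $g_{C,B,A}\neq 0$; the equivalence above then yields $h_{w_{C}^{+},w_{B}^{+},w_{A}^{+}}\neq 0$, so $C_{w_{A}^{+}}$ appears with nonzero coefficient in $C_{w_{C}^{+}}C_{w_{B}^{+}}$, and taking $h=C_{w_{C}^{+}}$ in the definition of $\preceq_{L}$ on $W$ delivers $w_{A}^{+}\preceq_{L}w_{B}^{+}$. The $\preceq_{R}$ case then drops out by symmetry: since $A\preceq_{R}B$ means $A^{t}\preceq_{L}B^{t}$, and since $w\mapsto w^{-1}$ is a length-preserving bijection from $W_{\lambda}w_{A}W_{\mu}$ onto $W_{\mu}w_{A}^{-1}W_{\lambda}$ forcing $w_{A^{t}}^{+}=(w_{A}^{+})^{-1}$, the previous step applied to $A^{t},B^{t}$ gives $(w_{A}^{+})^{-1}\preceq_{L}(w_{B}^{+})^{-1}$, i.e.\ $w_{A}^{+}\preceq_{R}w_{B}^{+}$.

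For the $\preceq_{LR}$ case the plan is to iterate the evaluation trick from the proof of Proposition~3.2 one further step. Suppose $A\preceq_{LR}B$, and choose $C,D$ such that the coefficient $c_{A}$ of $\theta_{A}$ in the triple product $\theta_{C}\theta_{B}\theta_{D}$ is nonzero. I would evaluate this product on $C_{w_{0,co(D)}}$ in two ways: on one hand, writing $\theta_{C}\theta_{B}\theta_{D}=\sum_{F}c_{F}\,\theta_{F}$ and applying each $\theta_{F}$ produces $\sum_{F}c_{F}\,C_{w_{F}^{+}}$; on the other hand, iterating the identity $\theta_{X}(C_{w_{Y}^{+}})=h_{co(X)}^{-1}\,C_{w_{X}^{+}}C_{w_{Y}^{+}}$ (already used in the proof of Proposition~3.2) together with the $\mathcal{H}$-linearity of $\theta_{C}$ and $\theta_{B}$ gives $h_{co(B)}^{-1}h_{co(C)}^{-1}\,C_{w_{C}^{+}}C_{w_{B}^{+}}C_{w_{D}^{+}}$. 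Matching coefficients of $C_{w_{A}^{+}}$ in the two expressions forces this coefficient to be nonzero in $C_{w_{C}^{+}}C_{w_{B}^{+}}C_{w_{D}^{+}}$, whence $w_{A}^{+}\preceq_{LR}w_{B}^{+}$ with $h=C_{w_{C}^{+}}$ and $h'=C_{w_{D}^{+}}$.

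The hard part will be the bookkeeping in the $\preceq_{LR}$ step, namely verifying that $\theta_{C}\bigl(\theta_{B}(C_{w_{D}^{+}})\bigr)$ factors cleanly via the $\mathcal{H}$-linearity of $\theta_{C}$ followed by the absorbing identity $C_{w_{0,\mu}}C_{w_{Y}^{+}}=h_{\mu}C_{w_{Y}^{+}}$ of Lemma~2.1(1). This requires the compatibility conditions $co(C)=ro(B)$ and $co(B)=ro(D)$, both of which are automatic once $\theta_{C}\theta_{B}\theta_{D}\neq 0$; pleasantly, no positivity of the structure constants of $\mathcal{S}_{q}^{\vartriangle}(n,r)$ is required anywhere in the argument.
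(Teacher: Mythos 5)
Your proposal is correct and follows essentially the same route as the paper: the $\preceq_{L}$ case is verbatim the paper's argument (pass from $g_{C,B,A}\neq 0$ to $h_{\sigma(C),\sigma(B),\sigma(A)}\neq 0$ via Proposition~3.2 and the invertibility of $h_{co(C)}$ in the fraction field). The paper in fact only writes out the $\preceq_{L}$ case, so your careful treatment of $\preceq_{R}$ via $w_{A^{t}}^{+}=(w_{A}^{+})^{-1}$ and of $\preceq_{LR}$ via the iterated evaluation on $C_{w_{0,co(D)}}$ is a sound filling-in of details the paper leaves implicit.
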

\begin{proof}
Since $A\preceq_{L} B,$ there is $C\in \Theta_{\vartriangle}(n,r)$ such that $g_{C,B,A}\neq 0,$ but we have $g_{C,B,A}=h_{co(C)}^{-1}\cdot h_{\sigma(C),\sigma(B),\sigma(A)},$ thus $h_{\sigma(C),\sigma(B),\sigma(A)}\neq 0$ and we get $w_{A}^{+}\preceq_{L} w_{B}^{+}.$
\end{proof}

\begin{lemma}If $A\preceq_{L} B$, then $co(A)=co(B);$ if $A\preceq_{R} B,$ then $ro(A)=ro(B).$
\end{lemma}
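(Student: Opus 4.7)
The plan is to unwind the definition of $\preceq_{L}$ and reduce directly to the compatibility constraints on the structure constants $g_{A,B,C}$ given by Proposition 3.2.

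First I would observe that, by the definition recalled in 4.4, $A\preceq_{L}B$ means there exists $C\in\Theta_{\vartriangle}(n,r)$ such that $\theta_{A}$ appears with nonzero coefficient in $\theta_{C}\theta_{B}$, i.e.\ $g_{C,B,A}\neq 0$. Then I invoke Proposition 3.2, which asserts that $g_{C,B,A}\neq 0$ forces the ``matching'' conditions $co(C)=ro(B)$ and $(ro(C),co(B))=(ro(A),co(A))$. Reading off the second coordinate of the last equality gives $co(A)=co(B)$, proving the first claim.

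For the second claim, I would reduce it to the first via the transpose. By definition $A\preceq_{R}B$ means $A^{t}\preceq_{L}B^{t}$, so the first part applied to $A^{t},B^{t}$ gives $co(A^{t})=co(B^{t})$. Since transposition interchanges row and column sums, i.e.\ $co(A^{t})=ro(A)$ and $co(B^{t})=ro(B)$ (as recorded in 4.3, where $A=(\lambda,w_{A},\mu)$ has $A^{t}=(\mu,w_{A}^{-1},\lambda)$), this yields $ro(A)=ro(B)$.

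There is no real obstacle here; the entire content of the lemma is packaged inside the compatibility part of Proposition 3.2, so the proof is essentially a one-line bookkeeping argument together with the transpose symmetry of the preorders.
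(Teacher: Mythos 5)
Your proposal is correct and follows essentially the same route as the paper: extract $C$ with $g_{C,B,A}\neq 0$ from the definition of $\preceq_{L}$, apply the compatibility constraints of Proposition 3.2 to read off $co(A)=co(B)$, and deduce the $\preceq_{R}$ statement via the transpose. The only difference is that you spell out the transpose reduction explicitly, which the paper leaves implicit.
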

\begin{proof}

Since $A\preceq_{L} B,$ there is $C\in \Theta_{\vartriangle}(n,r)$ such that $g_{C,B,A}\neq 0.$ From Proposition 3.2, it follows that $(ro(A), co(A))=(ro(C), co(B)),$ and we get the result.
\end{proof}

\begin{lemma}
Let $\lambda,\mu,\nu\in \Lambda_{\vartriangle}(n,r),$ $x\in \mathcal{D}_{\lambda\mu}^{\vartriangle,+}$ and $y\in \mathcal{D}_{\mu\nu}^{\vartriangle,+},$ if $h_{x,y,z}\neq 0$ for some $z\in W,$ then $z\in \mathcal{D}_{\lambda\nu}^{\vartriangle,+}.$
\end{lemma}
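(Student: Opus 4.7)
The plan is to unpack $z \in \mathcal{D}_{\lambda\nu}^{\vartriangle,+}$ via the characterization recalled just before Lemma 2.3, namely $z \in \mathcal{D}_{\lambda\nu}^{\vartriangle,+}$ if and only if $I(\lambda) \subseteq \mathfrak{L}(z)$ and $I(\nu) \subseteq \mathfrak{R}(z)$, and then to read off both inclusions from the two-sided preorder behaviour of the Kazhdan-Lusztig structure constants in 2.3(a).

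First, I would translate the hypotheses into descent-set language. By the same characterization, $x \in \mathcal{D}_{\lambda\mu}^{\vartriangle,+}$ gives $I(\lambda) \subseteq \mathfrak{L}(x)$ and $I(\mu) \subseteq \mathfrak{R}(x)$, while $y \in \mathcal{D}_{\mu\nu}^{\vartriangle,+}$ gives $I(\mu) \subseteq \mathfrak{L}(y)$ and $I(\nu) \subseteq \mathfrak{R}(y)$. The roles of $I(\mu)$ will not actually be needed; only the outer descent sets $I(\lambda) \subseteq \mathfrak{L}(x)$ and $I(\nu) \subseteq \mathfrak{R}(y)$ matter.

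Next, I would exploit the assumption $h_{x,y,z} \neq 0$. By definition of the structure constants, $C_z$ appears with nonzero coefficient in $C_x C_y$. Taking $h = C_x$ in the definition of $\preceq_L$ in 2.3 immediately yields $z \preceq_L y$. For the right side, I would invoke the symmetry $h_{x,y,z} = h_{y^{-1},x^{-1},z^{-1}}$ noted in 2.2 (equivalently, just take $h' = C_y$ in the definition of $\preceq_R$): this gives $z \preceq_R x$.

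Finally, 2.3(a) converts these into descent-set containments: $z \preceq_L y$ yields $\mathfrak{R}(y) \subseteq \mathfrak{R}(z)$, and $z \preceq_R x$ yields $\mathfrak{L}(x) \subseteq \mathfrak{L}(z)$. Chaining with the hypotheses, $I(\lambda) \subseteq \mathfrak{L}(x) \subseteq \mathfrak{L}(z)$ and $I(\nu) \subseteq \mathfrak{R}(y) \subseteq \mathfrak{R}(z)$, so $z \in \mathcal{D}_{\lambda\nu}^{\vartriangle,+}$ as required. There is no real obstacle here; the proof is essentially a direct invocation of 2.3(a) applied to both sides, and the only thing one must be careful about is correctly citing the remark preceding Lemma 2.3 that identifies $\mathcal{D}_{IK}^{+}$ via descent-set containments.
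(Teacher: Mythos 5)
Your proof is correct and is essentially identical to the paper's: both deduce $z\preceq_{L}y$ and $z\preceq_{R}x$ from $h_{x,y,z}\neq 0$, apply 2.3(a) to get $\mathfrak{L}(x)\subseteq\mathfrak{L}(z)$ and $\mathfrak{R}(y)\subseteq\mathfrak{R}(z)$, and conclude via the descent-set characterization of $\mathcal{D}_{\lambda\nu}^{\vartriangle,+}$ (the paper phrases this as $sz<z$ for $s\in S_{\lambda}$ and $zs<z$ for $s\in S_{\nu}$, which is the same thing).
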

\begin{proof}
For $\lambda\in \Lambda_{\vartriangle}(n,r),$ we set $S_{\lambda}$: $=W_{\lambda}\cap S,$ the set of Coxeter generators of the parabolic subgroup $W_{\lambda}.$ Let $x\in \mathcal{D}_{\lambda\mu}^{\vartriangle,+},$ $y\in \mathcal{D}_{\mu\nu}^{\vartriangle,+}$ and $h_{x,y,z}\neq 0.$ On the one hand, this means that $sx< x$ for all $s\in S_{\lambda}$ and $ys< y$ for all $s\in S_{\nu}.$ On the other hand, we get $z\preceq_{L} y$ and $z\preceq_{R} x,$ and thus $zs< z$ for all $s\in S$ with $ys< y$ and $sz< z$ for all $s\in S$ with $sx< x$ by 2.3 (a). Thus, we have in particular that $zs<z$ for all $s\in S_{\nu}$ and $sz< z$ for all $s\in S_{\lambda}.$ Hence $z$ is the longest element in its $W_{\lambda}$-$W_{\nu}$-double coset in $W.$
\end{proof}
\begin{lemma}
We have $A\preceq_{R} B$ if and only if there is $C\in \Theta_{\vartriangle}(n,r)$ with $g_{B,C,A}\neq 0.$
\end{lemma}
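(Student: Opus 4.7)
The plan is to unfold the definition $A\preceq_R B \Leftrightarrow A^t\preceq_L B^t$, so that the task reduces to showing that the existence of $C$ with $g_{C,B^t,A^t}\neq 0$ is equivalent to the existence of $C'$ with $g_{B,C',A}\neq 0$. Every such assertion will then be translated through Proposition 3.2 into a statement about the Hecke algebra structure constants $h_{*,*,*}$, and the equivalence will follow from the symmetry $h_{x,y,z}=h_{y^{-1},x^{-1},z^{-1}}$ recorded in 2.2, combined with the transposition compatibility $\sigma(X^t)=\sigma(X)^{-1}$ for every $X\in \Theta_\vartriangle(n,r)$. This last identity holds because, for $X=(\lambda,w_X,\mu)$, the longest element of $W_\mu w_X^{-1} W_\lambda$ is the inverse of the longest element of $W_\lambda w_X W_\mu$, both inverses lying in the respective sets of distinguished maximal-length double coset representatives.

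For the forward implication, assume $A\preceq_R B$ and pick $C\in \Theta_\vartriangle(n,r)$ with $g_{C,B^t,A^t}\neq 0$. Proposition 3.2 then yields the shape constraints $co(C)=co(B)$, $ro(C)=co(A)$, $ro(B)=ro(A)$, and the inequality $h_{\sigma(C),\sigma(B^t),\sigma(A^t)}\neq 0$, the passage being valid because the correcting factor $h_{co(C)}\in\mathcal{A}$ is a nonzero Laurent polynomial. The Hecke symmetry converts this into $h_{\sigma(B^t)^{-1},\sigma(C)^{-1},\sigma(A^t)^{-1}}\neq 0$, which after applying $\sigma(X^t)=\sigma(X)^{-1}$ becomes $h_{\sigma(B),\sigma(C^t),\sigma(A)}\neq 0$. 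The row/column shapes just listed are exactly what is needed to render the triple $(B,C^t,A)$ admissible for Proposition 3.2, so $g_{B,C^t,A}\neq 0$ and $C':=C^t$ is the desired witness.

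The converse runs along the same chain read backwards: starting from $g_{B,C,A}\neq 0$, Proposition 3.2 gives $h_{\sigma(B),\sigma(C),\sigma(A)}\neq 0$; the symmetry combined with $\sigma(X^t)=\sigma(X)^{-1}$ turns this into $h_{\sigma(C^t),\sigma(B^t),\sigma(A^t)}\neq 0$; and a final application of Proposition 3.2 produces $g_{C^t,B^t,A^t}\neq 0$, whence $A^t\preceq_L B^t$ and hence $A\preceq_R B$. The main obstacle is purely bookkeeping: at each translation between $g$'s and $h$'s one must track the row and column vectors to verify that the matching conditions $co(\cdot)=ro(\cdot)$ are satisfied and that $C^t$ genuinely belongs to $\Theta_\vartriangle(n,r)$ with the correct row/column data, so that the nonzero factor $h_{co(\cdot)}$ of Proposition 3.2 cancels cleanly. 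Once this is verified the algebraic content collapses to the single Hecke identity $h_{x,y,z}=h_{y^{-1},x^{-1},z^{-1}}$.
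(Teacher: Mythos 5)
Your proof is correct and follows essentially the same route as the paper's: reduce $A\preceq_R B$ to $A^t\preceq_L B^t$, then use Proposition 3.2 together with the Hecke identity $h_{x,y,z}=h_{y^{-1},x^{-1},z^{-1}}$ and $\sigma(X^t)=\sigma(X)^{-1}$ to show $g_{C,B^t,A^t}\neq 0$ iff $g_{B,C^t,A}\neq 0$. Your version merely makes explicit the row/column bookkeeping that the paper leaves implicit.
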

\begin{proof}
$A\preceq_{R} B$ is equivalent to $A^{t}\preceq_{L} B^{t}.$ This in turn means that there is $C\in \Theta_{\vartriangle}(n,r)$ such that $g_{C, B^{t}, A^{t}}\neq 0.$ From $h_{x,y,z}=h_{y^{-1},x^{-1},z^{-1}}$ for all $x,y,z\in W$ together with $\sigma(A^{t})=\sigma(A)^{-1},$ we get that $g_{B,C,A}=0$ if and only if $g_{C^{t}, B^{t}, A^{t}}=0$ from Proposition 3.2, which directly implies the result.
\end{proof}

We now recall the following result of Mcgerty (see [Mc, Proposition 3.8 (1), (2) and (3)]).
\begin{lemma}
$(1)$   $A\sim_{LR} B$ if and only if $w_{A}^{+}\sim_{LR} w_{B}^{+}.$

$(2)$   $A\sim_{L} B$ if and only if $w_{A}^{+}\sim_{L} w_{B}^{+}$ and $co(A)=co(B).$

$(3)$   $A\sim_{R} B$ if and only if $w_{A}^{+}\sim_{R} w_{B}^{+}$ and $ro(A)=ro(B).$
\end{lemma}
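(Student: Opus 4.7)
The plan is to prove the three ``if and only if'' statements by exploiting the characterisations of cells through nontriviality of products in the asymptotic algebras $\mathcal{J}_{\vartriangle}(n,r)$ and $\mathcal{J}_{\mathbb{Z}}$ (see 4.4 (c), (d) and 2.4 (a), (b)), supplemented by careful bookkeeping on descent sets.

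The forward implications in all three parts are immediate from the preceding lemmas of Section 5: Lemma 5.1 applied to both $\preceq$-directions of the given $\sim$-relation yields the corresponding relation on $w_A^+$ and $w_B^+$, while Lemma 5.2 forces the equality $co(A)=co(B)$ (respectively $ro(A)=ro(B)$) in (2) (respectively (3)).

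For the backward direction of (2), I would exploit the identity $w_{B^t}^{+}=(w_B^+)^{-1}$ together with 4.4 (c), reducing the claim $A\sim_L B$ to $t_A t_{B^t}\neq 0$ in $\mathcal{J}_{\vartriangle}(n,r)$. The hypothesis $w_A^+\sim_L w_B^+$ gives, via 2.4 (a), some $y\in W$ with $\gamma_{w_A^+,(w_B^+)^{-1},y}\neq 0$. Applying 2.3 (c) twice, one finds $y\sim_R w_A^+$ and $y\sim_L (w_B^+)^{-1}$, so by 2.3 (a) the descent sets satisfy $\mathfrak{L}(y)=\mathfrak{L}(w_A^+)\supseteq I(ro(A))$ and $\mathfrak{R}(y)=\mathfrak{R}((w_B^+)^{-1})=\mathfrak{L}(w_B^+)\supseteq I(ro(B))$, placing $y$ in $\mathcal{D}_{ro(A),ro(B)}^{\vartriangle,+}$. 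Using the hypothesis $co(A)=co(B)$, the element $y$ is then $w_C^+$ for some $C\in\Theta_{\vartriangle}(n,r)$ satisfying the row/column matching conditions of Proposition 3.2, so $\gamma_{A,B^t,C}\neq 0$ and therefore $A\sim_L B$. Part (3) follows from (2) by replacing $A,B$ with $A^t, B^t$ and using $w_{A^t}^+=(w_A^+)^{-1}$ together with the conversion of $\sim_L$ into $\sim_R$ under inversion in $W$.

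The backward direction of (1) is completely parallel but uses 4.4 (d) and 2.4 (b): the witness $t_{w_A^+}t_x t_{w_B^+}\neq 0$ unpacks to $\gamma_{w_A^+,x,y}\gamma_{y,w_B^+,z}\neq 0$ for some $y,z\in W$, and the same descent-set calculus via 2.3 (a), (c) then shows $x\in \mathcal{D}_{co(A),ro(B)}^{\vartriangle,+}$, $y\in \mathcal{D}_{ro(A),ro(B)}^{\vartriangle,+}$, $z\in \mathcal{D}_{ro(A),co(B)}^{\vartriangle,+}$, so each is the maximal double coset representative $w^+$ of an element of $\Theta_{\vartriangle}(n,r)$ of compatible row/column type; the resulting $C\in \Theta_{\vartriangle}(n,r)$ then witnesses $t_A t_C t_B\neq 0$. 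The main obstacle is a purely bookkeeping one: confirming, using 2.3 (a), (c) together with $\mathfrak{R}(w^{-1})=\mathfrak{L}(w)$, that the $W$-level witnesses automatically have the descent data forcing them to be maximal double coset representatives of the precise types predicted by Proposition 3.2. No structural input beyond what is already collected in Section 2 is required.
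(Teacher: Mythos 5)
Your argument is correct, but it cannot be compared with a proof in the paper because the paper gives none: Lemma 5.5 is simply \emph{recalled} from [Mc, Proposition 3.8 (1)--(3)] without argument. What you supply is essentially McGerty's proof reconstructed from the paper's own toolkit, and the key steps all check out: the forward implications do follow from Lemmas 5.1 and 5.2; the identity $w_{B^t}^{+}=(w_B^{+})^{-1}$ holds because inversion preserves length on double cosets; and the descent-set bookkeeping via 2.3 (a), (c) together with the characterization ``$x\in\mathcal{D}_{IK}^{+}$ iff $I\subseteq\mathfrak{L}(x)$ and $K\subseteq\mathfrak{R}(x)$'' correctly shows that the $W$-level witnesses $y$ (resp. $x,y,z$ in part (1)) are maximal double coset representatives of exactly the types needed for Proposition 3.2 (together with Lemma 5.3) to convert nonvanishing of $h_{\sigma(A),\sigma(B^t),\sigma(C)}$ into nonvanishing of $g_{A,B^t,C}$, hence of $\gamma_{A,B^t,C}$. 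The one point to be careful about is your appeal to 4.4 (c) and (d) in the final step: those equivalences are asserted in the paper with only the cryptic justification ``by 2.4 (a) and (b),'' and the direction $A\sim_L B\Rightarrow t_At_{B^t}\neq 0$ is essentially equivalent to the statement you are proving, so leaning on 4.4 (c) as a black box risks circularity. Fortunately you only need the harmless direction $t_At_{B^t}\neq 0\Rightarrow A\sim_L B$, which is secured independently of Lemma 5.5 by the $\mathbf{Q7}/\mathbf{Q8}$-type argument (permuting $\gamma_{A,B^t,C}$ via 2.2 (d) to produce explicit witnesses for both $A\preceq_L B$ and $B\preceq_L A$); it would strengthen your write-up to either cite that mechanism explicitly or to bypass 4.4 (c) altogether by exhibiting the two witnesses $g_{C,B,A}\neq 0$ and $g_{C^t,A,B}\neq 0$ directly, and likewise in part (1) to note that positivity of the structure constants $\gamma$ is what lets you extract a single nonvanishing product $\gamma_{w_A^+,x,y}\gamma_{y,w_B^+,z}$ from $t_{w_A^+}t_xt_{w_B^+}\neq 0$ without cancellation.
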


For any $y, w\in W,$ we define $v^{-l(w)}P_{y,w}=p_{y, w},$ where $p_{y,w}$ is just as in [L8, $\S5.3$]. Moreover, for $z\in W,$ if we define $\Delta(z)=-$deg$p_{1,z}$ following [L8, $\S14.1$], then we have $-\Delta(z)=-l(z)+2\delta(z),$ i.e. $\Delta(z)=l(z)-2\delta(z).$

Lusztig has proved that Conjectures 14.2 in [L8] hold in a number of cases, including the case of a finite Weyl group, an affine Weyl group, or a dihedral group. In [BN, Proposition 3.7], they have proved that the analogous statements for the $q$-Schur algebra $\mathcal{S}_{q}(n,r)$ hold. Now we will prove that the analogous statements for the affine $q$-Schur algebra also hold.
\begin{proposition}
The following properties hold for the affine $q$-Schur algebra $\mathcal{S}_{q}^{\vartriangle}(n,r):$

\textbf{Q1} ~~For any $A\in \Theta_{\vartriangle}(n,r)$ we have $\textbf{a}(A)\leq \Delta(\sigma(A)).$

\textbf{Q2} ~~Let $A,B\in \Theta_{\vartriangle}(n,r),$ if $\gamma_{A,B,D}\neq 0$ for some $D\in \textbf{D}_{\vartriangle}(n,r),$ then we have $B=A^{t}.$

\textbf{Q3} ~~If $A\in \Theta_{\vartriangle}(n,r),$ there is a unique element $D\in \textbf{D}_{\vartriangle}(n,r)$ with $\gamma_{A^{t},A,D}\neq 0.$

\textbf{Q4} ~~If $A\preceq_{LR} B,$ then $\textbf{a}(A)\geq \textbf{a}(B).$ Hence, if $A\sim_{LR} A',$ then $\textbf{a}(A)= \textbf{a}(A').$

\textbf{Q5} ~~If $D\in \textbf{D}_{\vartriangle}(n,r)$ and $A\in \Theta_{\vartriangle}(n,r)$ are such that $\gamma_{A^{t},A,D}\neq 0,$ then $\gamma_{A^{t},A,D}=1.$

\textbf{Q6} ~~For $D\in \textbf{D}_{\vartriangle}(n,r),$ we have $D=D^{t}.$

\textbf{Q7} ~~For any $A,B,C\in \Theta_{\vartriangle}(n,r),$ we have $\gamma_{A,B,C}=\gamma_{B,C^{t},A^{t}}=\gamma_{C^{t},A,B^{t}}.$

\textbf{Q8} ~~Let $A,B,C\in \Theta_{\vartriangle}(n,r)$ be such that $\gamma_{A,B,C}\neq 0,$ then $A\sim_{L} B^{t}, B\sim_{L} C$ and $A\sim_{R} C.$

\textbf{Q9} ~~If $A\preceq_{L} B$ and $\textbf{a}(A)= \textbf{a}(B),$ then $A\sim_{L} B.$

\textbf{Q10} ~~If $A\preceq_{R} B$ and $\textbf{a}(A)= \textbf{a}(B),$ then $A\sim_{R} B.$

\textbf{Q11} ~~If $A\preceq_{LR} B$ and $\textbf{a}(A)= \textbf{a}(B),$ then $A\sim_{LR} B.$

\textbf{Q13} ~~Every left cell contains a unique element $D\in \textbf{D}_{\vartriangle}(n,r).$ We have $\gamma_{A^{t},A,D}\neq 0$ for all $A\sim_{L} D.$

\textbf{Q14} ~~For every $A\in \Theta_{\vartriangle}(n,r),$ we have $A\sim_{LR} A^{t}.$

\textbf{Q15} ~~Let $v'$ be a second indeterminate and let $g_{A,B,C}'\in \mathbb{Z}[v', v'^{-1}]$ be obtained from $g_{A,B,C}$ by the substitution $v\mapsto v'.$ If $A,A',B,C\in \Theta_{\vartriangle}(n,r)$ satisfy $\textbf{a}(C)= \textbf{a}(B),$ then $$\sum_{B'}g_{C,A',B'}'g_{A,B',B}=\sum_{B'}g_{A,C,B'}g_{B',A',B}'.$$
\end{proposition}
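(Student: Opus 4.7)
The strategy is to reduce each property $\textbf{Q1},\ldots,\textbf{Q15}$ for $\mathcal{S}_{q}^{\vartriangle}(n,r)$ to the corresponding Lusztig conjecture $\textbf{P1},\ldots,\textbf{P15}$ for the extended affine Hecke algebra $\mathcal{H}_{\vartriangle}(r)$. All of $\textbf{P1}$--$\textbf{P15}$ are known for affine Weyl groups of type $\tilde{A}$ and lift to the extended group $W=\Omega\ltimes W'$ since $\Omega$ acts by Coxeter graph automorphisms. The three tools that make the reduction essentially mechanical are: (i) Proposition~3.2, which yields $h_{A,B}\cdot g_{A,B,C}=h_{\sigma(A),\sigma(B),\sigma(C)}$ where $h_{A,B}=h_{co(A)}=h_{ro(B)}$ is a nonzero element of $\mathcal{A};$ (ii) the definitional identifications $\textbf{a}(A)=\textbf{a}(\sigma(A))$ and $\gamma_{A,B,C}=\gamma_{\sigma(A),\sigma(B),\sigma(C)}$ whenever $g_{A,B,C}\neq 0;$ and (iii) Lemmas~5.1--5.5, which transfer preorder/cell information between $\mathcal{H}_{\vartriangle}(r)$ and $\mathcal{S}_{q}^{\vartriangle}(n,r).$

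The first block is almost formal. $\textbf{Q1}$ is immediate from $\textbf{P1}$ combined with $\textbf{a}(A)=\textbf{a}(\sigma(A))$ and $\Delta(\sigma(A))=l(\sigma(A))-2\delta(\sigma(A)).$ For $\textbf{Q2},$ nonzero $\gamma_{A,B,D}$ forces $\gamma_{\sigma(A),\sigma(B),\sigma(D)}\neq 0,$ so $\textbf{P2}$ gives $\sigma(B)=\sigma(A)^{-1}=\sigma(A^{t});$ the row/column constraints from Proposition~3.2 together with $ro(D)=co(D)$ then upgrade this to $B=A^{t}.$ $\textbf{Q3}$ follows from $\textbf{P3}$ via the transparent bijection between $\textbf{D}_{\vartriangle}(n,r)$ and $\{(\lambda,d)\mid\lambda\in\Lambda_{\vartriangle}(n,r),\,d\in\textbf{D}\cap\mathcal{D}_{\lambda\lambda}^{\vartriangle,+}\}.$ $\textbf{Q5}$ is a direct readout of $\textbf{P5}$ through the $\gamma$-identification. $\textbf{Q6}$ combines $\sigma(D)^{2}=1$ (from 2.2(b)) with $\sigma(D^{t})=\sigma(D)^{-1}$ and the constraint $ro(D)=co(D)$ built into $\textbf{D}_{\vartriangle}(n,r).$ $\textbf{Q7}$ is 2.2(d) transcribed via the $\gamma$-identification.

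The second block uses the preorder/cell dictionary. Lemma~5.1 shows that any preorder inequality $A\preceq B$ descends to $\sigma(A)\preceq\sigma(B),$ while Lemma~5.5 upgrades the corresponding equivalences back up once the relevant row/column sums agree (as forced by Lemmas~5.2 and~5.4). Thus $\textbf{Q4}$ is $\textbf{P4}$ applied to $(\sigma(A),\sigma(B));$ $\textbf{Q8}$ combines $\textbf{P8}$ with Lemma~5.5(2)--(3), using that nonzero $\gamma_{A,B,C}$ forces $co(A)=ro(B)=co(B^{t})$ and the analogous identities; $\textbf{Q9},$ $\textbf{Q10},$ $\textbf{Q11}$ follow from $\textbf{P9},$ $\textbf{P10},$ $\textbf{P11}$ in the same way. $\textbf{Q13}$ is 4.4(e) together with $\textbf{P13}$ applied to the unique distinguished involution in the left cell of $\sigma(A)$ (the needed nonvanishing of $g_{A^{t},A,D}$ follows from the constraint $co(A)=co(D)$ implied by $A\sim_{L}D$). $\textbf{Q14}$ follows from $\sigma(A^{t})=\sigma(A)^{-1},$ $\textbf{P14},$ and Lemma~5.5(1).

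The main obstacle is $\textbf{Q15},$ which requires careful bookkeeping of the scalar factors $h_{A,B}.$ The plan is to substitute Proposition~3.2 on both sides: each $g_{\cdot,\cdot,\cdot}$ becomes $h_{co(\cdot)}^{-1}\cdot h_{\sigma(\cdot),\sigma(\cdot),\sigma(\cdot)}$ (with $v$ or $v'$ as appropriate). The row/column constraints on the two sides---on the LHS, $ro(B')=ro(C),$ $co(B')=co(A'),$ $co(C)=ro(A'),$ $co(A)=ro(C);$ on the RHS, $ro(B')=ro(A),$ $co(B')=co(C),$ together with the same external constraints---force both sides to share the prefactor $h_{co(A)}(v)^{-1}\,h_{co(C)}(v')^{-1}.$ After cancelling this factor, $\textbf{Q15}$ reduces to
\[
\sum_{y\in W}h_{\sigma(C),\sigma(A'),y}(v')\,h_{\sigma(A),y,\sigma(B)}(v)=\sum_{y\in W}h_{\sigma(A),\sigma(C),y}(v)\,h_{y,\sigma(A'),\sigma(B)}(v'),
\]
which is $\textbf{P15}$ for $\mathcal{H}_{\vartriangle}(r)$ evaluated at $(\sigma(A),\sigma(A'),\sigma(B),\sigma(C)),$ the hypothesis $\textbf{a}(C)=\textbf{a}(B)$ translating to $\textbf{a}(\sigma(C))=\textbf{a}(\sigma(B)).$ Lemma~5.3 ensures that each side of the $W$-sum is supported exactly on the subset of $W$ matching the corresponding sum over $B'$ in $\textbf{Q15},$ so the two formulations agree termwise. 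Once this structural reduction is written out, the remainder is routine verification.
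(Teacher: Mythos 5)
Your proposal is correct and follows essentially the same route as the paper: reduce each property to the corresponding Hecke-algebra statement via Proposition~3.2, the identifications $\textbf{a}(A)=\textbf{a}(\sigma(A))$ and $\gamma_{A,B,C}=\gamma_{\sigma(A),\sigma(B),\sigma(C)},$ and Lemmas~5.1--5.5, with $\textbf{Q15}$ handled by cancelling the $h_{\mu}$-prefactors and invoking the Hecke-algebra identity. The only differences are cosmetic (e.g.\ the paper proves $\textbf{Q8}$ via the cyclic symmetry of the $\gamma$'s and $\textbf{Q14}$ via $\textbf{Q13},$ and it makes explicit the use of Lemma~5.3 in $\textbf{Q3}$ to place the distinguished involution in $\mathcal{D}_{\mu\mu}^{\vartriangle,+}$).
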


\begin{proof}
$\textbf{Q1}$ is a direct consequence of 2.2 (a).

$\textbf{Q2}$ Suppose that $\gamma_{A,B,D}\neq 0$ for some $A, B\in \Theta_{\vartriangle}(n,r)$ and $D\in \textbf{D}_{\vartriangle}(n,r).$ Since $\gamma_{A,B,D}\neq 0,$ it follows that $g_{A,B,D}\neq 0.$ Thus we have $co(A)=ro(B), ro(A)=ro(D)$ and $co(B)=co(D);$ but $co(D)=ro(D)$ implies that $ro(A)=co(B).$ We now write $A=(\lambda,w_{A},\mu),$ $B=(\mu,w_{B},\lambda).$ We have $\gamma_{A,B,D}=\gamma_{\sigma(A),\sigma(B),\sigma(D)},$ from $\sigma(D)\in \textbf{D}$, we deduce from 2.2 (b) that $\sigma(A)=\sigma(B)^{-1},$ thus, we get $w_{A}=w_{B}^{-1},$ so $B=A^{t}.$

$\textbf{Q3}$ For $A=(\lambda,w_{A},\mu)\in \Theta_{\vartriangle}(n,r),$ by 2.2 (c), there is a unique $d\in \textbf{D}$ such that $\gamma_{\sigma(A)^{-1},\sigma(A),d}\neq 0.$ Since $\sigma(A)^{-1}=\sigma(A^{t}),$ we deduce that $h_{\sigma(A^{t}), \sigma(A),d}\neq 0,$ but $\sigma(A^{t})\in \mathcal{D}_{\mu\lambda}^{\vartriangle,+}$ and $\sigma(A)\in \mathcal{D}_{\lambda\mu}^{\vartriangle,+},$ then Lemma 5.3 implies that $d\in \mathcal{D}_{\mu\mu}^{\vartriangle,+}.$ We denote by $\tilde{d}$ the representative of minimal length of the coset $W_{\mu}dW_{\mu},$ and we set $D$: $=(\mu,\tilde{d},\mu),$ then $D\in \textbf{D}_{\vartriangle}(n,r)$ and $\sigma(D)=d.$ It follows that $\gamma_{A^{t}, A, D}=\gamma_{\sigma(A^{t}), \sigma(A),\sigma(D)}\neq 0.$

$\textbf{Q4}$ follows from 2.3 (b) and Lemma 5.1.

$\textbf{Q5}$ follows from 2.2 (b).

$\textbf{Q6}$ If $D=(\lambda,w_{D},\lambda)\in \textbf{D}_{\vartriangle}(n,r),$ then we have $\sigma(D)\in \textbf{D},$ thus, $\sigma(D)^{-1}=\sigma(D)$ by 2.2 (b). Therefore, $w_{D}^{-1}=w_{D}$ and $D=D^{t}.$

$\textbf{Q7}$ follows from 2.2 (d).

$\textbf{Q8}$ Suppose that $\gamma_{A,B,C}\neq 0$ for some $A,B,C\in \Theta_{\vartriangle}(n,r),$ then from $h_{co(A)}\cdot g_{A,B,C}=h_{\sigma(A),\sigma(B),\sigma(C)},$ we get $g_{A,B,C}\neq 0,$ so $C\preceq_{L} B,$ $C\preceq_{R} A.$ Similarly, from $\gamma_{B,C^{t},A^{t}}\neq 0,$ we get $A^{t}\preceq_{R} B,$ $A^{t}\preceq_{L} C^{t};$ from $\gamma_{C^{t},A,B^{t}}\neq 0,$ we get $B^{t}\preceq_{R} C^{t},$ $B^{t}\preceq_{L} A.$ So we get $A\sim_{L} B^{t}, B\sim_{L} C$ and $A\sim_{R} C.$

$\textbf{Q9}$ Let $A,B\in \Theta_{\vartriangle}(n,r)$ with $A\preceq_{L} B$ and $\textbf{a}(A)= \textbf{a}(B).$ By Lemma 5.1, we have $\sigma(A)\preceq_{L} \sigma(B),$ hence 2.3 (d) implies that $\sigma(A)\sim_{L} \sigma(B).$ Moreover, by Lemma 5.2, we have $co(A)=co(B).$ So $A\sim_{L} B$ by Lemma 5.5 (2).

$\textbf{Q10}$ can be proved similarly or follows from $\textbf{Q9}$ using $\textbf{a}(A)= \textbf{a}(A^{t})$ for all $A\in \Theta_{\vartriangle}(n,r).$

$\textbf{Q11}$ can also be proved similarly.

$\textbf{Q13}$ Let $A\in \Theta_{\vartriangle}(n,r).$ By $\textbf{Q3},$ there is a unique element $D\in \textbf{D}_{\vartriangle}(n,r)$ with $\gamma_{A^{t},A,D}\neq 0$ and for this $D$ we have $A\sim_{L} D$ by $\textbf{Q8}.$ If we have $D\sim_{L} D'$ for $D, D'\in \textbf{D}_{\vartriangle}(n,r),$ then we conclude that $ro(D)=co(D)=co(D')=ro(D')$ using Lemma 5.2 and $\sigma(D)\sim_{L} \sigma(D')$ using Lemma 5.1. So we have $\sigma(D)=\sigma(D')$ by 2.3 (f) and thus we get $D=D'.$

$\textbf{Q14}$  Let $A\in \Theta_{\vartriangle}(n,r)$ and $D\in \textbf{D}_{\vartriangle}(n,r)$ be the unique element such that $A\sim_{L} D$ by $\textbf{Q13},$ then $A^{t}\sim_{R} D^{t}=D,$ so we get $A\sim_{LR} A^{t}.$

$\textbf{Q15}$ If $g_{C,A',B'}'\neq 0,$ then we have $g_{C,A',B'}'=h_{ro(A^{'})}'^{-1}\cdot h'_{\sigma(C),\sigma(A'),\sigma(B')};$ if $g_{A,B',B}\neq 0,$ then $g_{A,B',B}=h_{co(A)}^{-1}\cdot h_{\sigma(A),\sigma(B'),\sigma(B)};$ if $g_{A,C,B'}\neq 0,$ then $g_{A,C,B'}=h_{co(A)}^{-1}\cdot h_{\sigma(A),\sigma(C),\sigma(B')};$ if $g_{B',A',B}'\neq 0,$ then $g_{B',A',B}'=h_{ro(A^{'})}'^{-1}\cdot h'_{\sigma(B'),\sigma(A'),\sigma(B)},$ here $h_{\mu}'$ is obtained from $h_{\mu}$ by the substitution $v\mapsto v'.$ It follows from [L3, 2.4 (c)] that
\begin{align*}
\sum_{B'}g_{C,A',B'}'g_{A,B',B}&=h_{ro(A^{'})}'^{-1}h_{co(A)}^{-1}\sum_{B'}h'_{\sigma(C),\sigma(A'),\sigma(B')}h_{\sigma(A),\sigma(B'),\sigma(B)}\end{align*}
\begin{align*}
&~~~~~~~~~~~~=h_{ro(A^{'})}'^{-1}h_{co(A)}^{-1}\sum_{B'}h_{\sigma(A),\sigma(C),\sigma(B')}h'_{\sigma(B'),\sigma(A'),\sigma(B)}\\
&~~~~~~~~~~~~=\sum_{B'}g_{A,C,B'}g_{B',A',B}'.
\end{align*}
So we have proved the identity. \end{proof}

\section{Some examples of affine $q$-Schur algebras}

6.1 Let $(W_{0}, S_{0})$ denote the finite Weyl group associated with $W,$ that is, $S_{0}=\{s_{i}\}_{1\leq i\leq r-1}$ and $W_{0}=\mathfrak{S}_{r}.$ For each element $w$ in $W,$ we have $\textbf{a}(w)\leq \textbf{a}(w_{0})=l(w_{0})=\nu,$ where $w_{0}$ is the longest element of $W_{0}$ (see [L2]). The set $c_{0}=\{w\in W|~\textbf{a}(w)=l(w_{0})=\nu\}$ is a two-sided cell of $W.$ The two-sided cell contains $|W_{0}|=r!$ left (resp. right) cells, which is the lowest one in the set Cell($W$) of two-sided cells of $W$ concerned with the partial order $\preceq_{LR}$ (see [Shi2, Shi3]). The two-sided cell $c_{0}$ corresponds to the nilpotent $G$-orbit $\{0\}$ under Lusztig's bijection between the set Cell($W$) and the set of nilpotent $G$-orbits in $g=$Lie($G$), where $G$ is the connected reductive group over $\mathbb{C}$ with Weyl group $W_{0}.$ Under the bijection between the set Cell($W$) and the set $\mathcal{P}_{r}$, $c_{0}$ also corresponds to the partition $(r).$

We now consider the two-sided cell $c_{0}^{\vartriangle}$ of $\mathcal{S}_{q}^{\vartriangle}(n,r)$ corresponding to $c_{0},$ i.e. $c_{0}^{\vartriangle}=\{A\in \Theta_{\vartriangle}(n,r)|~\textbf{a}(A)=\textbf{a}(\sigma(A))=\nu\}.$ According to Lemma 5.1, $c_{0}^{\vartriangle}$ is just the lowest one in the set Cell($\Theta_{\vartriangle}(n,r)$) of two-sided cells in $\Theta_{\vartriangle}(n,r)$ and also corresponds to the partition $(r)$ under the bijection between the set Cell($\Theta_{\vartriangle}(n,r)$) and the set $\mathcal{P}^{n}_{r}.$ From Proposition 4.1, we know that the number of left cells in $c_{0}^{\vartriangle}$ is $n^{r}.$
\begin{lemma}
For $A=(\lambda,w_{A},\mu),$ $B=(\mu,1,\nu)\in \Theta_{\vartriangle}(n,r)$ with $W_{\nu}\subset W_{\mu},$ we define $C=(\lambda,w',\nu),$ where $w_{A}^{+}\in W_{\lambda}w'W_{\nu},$ then $\theta_{A}\theta_{B}=\theta_{C}.$
\end{lemma}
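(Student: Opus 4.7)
The plan is to adapt the proof strategy of Lemma 4.2, namely to show equality of $\theta_A\theta_B$ and $\theta_C$ by evaluating both sides at the distinguished element $C_{w_{0,\nu}}$. Recall from the proof of Theorem~3.1 that $\phi_{\lambda}\mathcal{S}_{q}^{\vartriangle}(n,r)\phi_{\nu}\cong \mathrm{Hom}_{\mathcal{H}_{\vartriangle}(r)}(x_{\nu}\mathcal{H}_{\vartriangle}(r), x_{\lambda}\mathcal{H}_{\vartriangle}(r))$, and any $\mathcal{H}_{\vartriangle}(r)$-homomorphism out of $x_{\nu}\mathcal{H}_{\vartriangle}(r)$ is determined by its value on $x_{\nu}$, hence on $C_{w_{0,\nu}}=t^{-l(w_{0,\nu})}x_{\nu}$. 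Since $ro(A)=\lambda$, $co(B)=\nu$ and $ro(C)=\lambda$, $co(C)=\nu$, both $\theta_A\theta_B$ and $\theta_C$ lie in $\phi_{\lambda}\mathcal{S}_{q}^{\vartriangle}(n,r)\phi_{\nu}$, so it suffices to verify the evaluations agree.

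For the left-hand side, I first observe that because $W_{\nu}\subset W_{\mu}$, the double coset $W_{\mu}\cdot 1\cdot W_{\nu}$ is just $W_{\mu}$, so $w_{B}^{+}=w_{0,\mu}$. Theorem~3.1(2) applied to $B$ gives $\theta_{B}(C_{w_{0,\nu}})=C_{w_{0,\mu}}$, and then applying Theorem~3.1(2) to $A$ (whose column type is $\mu$) yields
$$(\theta_{A}\theta_{B})(C_{w_{0,\nu}})=\theta_{A}(C_{w_{0,\mu}})=C_{w_{A}^{+}}.$$
For the right-hand side, Theorem~3.1(2) directly gives $\theta_{C}(C_{w_{0,\nu}})=C_{w_{C}^{+}}$, where $w_{C}^{+}$ is by definition the longest element of $W_{\lambda}w'W_{\nu}$.

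The one point that needs argument is the identity $w_{A}^{+}=w_{C}^{+}$. Since $W_{\nu}\subset W_{\mu}$ we have the inclusion $W_{\lambda}w'W_{\nu}\subset W_{\lambda}w'W_{\mu}$, and the hypothesis places $w_{A}^{+}$ in $W_{\lambda}w'W_{\nu}$; on the other hand $w_{A}^{+}\in W_{\lambda}w_{A}W_{\mu}$ by definition. Two double cosets sharing a common element coincide, so $W_{\lambda}w'W_{\mu}=W_{\lambda}w_{A}W_{\mu}$, and $w_{A}^{+}$ is the unique longest element of this common double coset. Because $w_{A}^{+}$ also lies in the sub-double-coset $W_{\lambda}w'W_{\nu}$, it must likewise be the longest element there, giving $w_{C}^{+}=w_{A}^{+}$ as desired. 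The main obstacle, if any, is precisely this double coset identification; everything else is a direct invocation of Theorem~3.1(2) and the injectivity of evaluation at $C_{w_{0,\nu}}$. Combining the two evaluations yields $\theta_{A}\theta_{B}=\theta_{C}$.
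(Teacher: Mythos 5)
Your proof is correct and follows essentially the same route as the paper: both arguments reduce the identity to a computation in $\mathcal{H}_{\vartriangle}(r)$ by evaluating the endomorphisms at the cyclic generator $C_{w_{0,\nu}}$ of $x_{\nu}\mathcal{H}_{\vartriangle}(r)$ and invoking Theorem 3.1(2), the paper's one-line proof being the observation that $C_{w_{A}^{+}}C_{w_{0,\mu}}=h_{\mu}C_{w_{A}^{+}}$ which encodes the same computation via Proposition 3.2. You additionally spell out the double-coset identification $w_{C}^{+}=w_{A}^{+}$, which the paper leaves implicit; that step is argued correctly.
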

\begin{proof}
It follows from the following identity:\vskip2mm $\hspace*{3cm}C_{w_{A}^{+}}C_{w_{0,\mu}}=t^{-l(w_{0,\mu})}\sum\limits_{w\in W_{\mu}}t^{2l(w)}C_{w_{A}^{+}}.$
\end{proof}

6.2 We first consider the affine $q$-Schur algebra $\mathcal{S}_{q}^{\vartriangle}(1,r).$ According to 4.5, it has only one two-sided cell $c_{0}^{\vartriangle},$ which corresponds to the partition $\mu=(r)\in \mathcal{P}^{1}_{r},$ then we have $G_{\mu}=GL_{r}(\mathbb{C})$ and $B_{\mu}=R(G_{\mu})\cong \mathbb{Z}[X_{1},\ldots,X_{r-1},X_{r},X_{r}^{-1}]$ (for example, see [FH, Exercise 23.36 (d). p. 379]). From Proposition 4.2 (1), we get the following isomorphism: $$\mathcal{J}_{\vartriangle}(1,r)\cong \mathbb{Z}[X_{1},\ldots,X_{r-1},X_{r},X_{r}^{-1}].$$ We will denote $\mathcal{J}_{\vartriangle}(1,r)$ by $\mathcal{J}$ in the following.

From Theorem 4.1, we have an $\mathcal{A}$-module homomorphism $\Phi: \mathcal{S}_{q}^{\vartriangle}(1,r)\rightarrow \mathcal{J},$ so each $\mathcal{J}$-module $E$ is endowed with an $\mathcal{S}_{q}^{\vartriangle}(1,r)$-module structure through $\Phi,$ we will denote the $\mathcal{S}_{q}^{\vartriangle}(1,r)$-module by $E_{\Phi}.$

In [Cu2, Theorem 4.1], we have shown that the affine $q$-Schur algebra $\mathcal{S}_{q}^{\vartriangle}(1,r)$ is affine cellular in the sense of Koenig and Xi (see [KX]). Applying [KX, Theorem 4.1], we can get the following corollary.
\begin{corollary}
$(a)$ Over a noetherian domain $k,$ for each simple $\mathcal{J}$-module $E$, the associated $\mathcal{S}_{q}^{\vartriangle}(1,r)$-module $E_{\Phi}$ has a composition factor, we denote it by $M.$

$(b)$ Keep the assumption and notation in $(a),$ the map $E\mapsto M$ defines a bijection between the isomorphism classes of simple $\mathcal{J}$-modules and the isomorphism classes of simple $\mathcal{S}_{q}^{\vartriangle}(1,r)$-modules.
\end{corollary}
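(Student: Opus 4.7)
The plan is to deduce the corollary as a direct application of \cite{KX}, Theorem 4.1 on the classification of simple modules for affine cellular algebras. Since $\mathcal{S}_{q}^{\vartriangle}(1,r)$ was shown to be affine cellular in [Cu2, Theorem 4.1], its simple modules (over a noetherian domain $k$) are classified by pairs consisting of a layer of the affine cell chain together with a simple module over the commutative affine $k$-algebra governing that layer. The task is therefore to identify these data in the present situation and match them with the data on the $\mathcal{J}$-side.

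The first observation I would carry out is that the two-sided cell structure of $\mathcal{S}_{q}^{\vartriangle}(1,r)$ is trivial: by the discussion in 6.1 and the bijection between two-sided cells and $\mathcal{P}_{r}^{n}$ recalled in 4.5, the case $n=1$ admits only the single partition $(r)$, so $\mathcal{S}_{q}^{\vartriangle}(1,r)$ has exactly one two-sided cell $c_{0}^{\vartriangle}$. Consequently the affine cell chain of \cite{KX} has only one nontrivial subquotient. Combining this with the identification $\mathcal{J}\cong \mathbb{Z}[X_{1},\ldots,X_{r-1},X_{r},X_{r}^{-1}]$ recorded just before the corollary, the single commutative algebra governing the cell can be identified with $\mathcal{J}$. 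Under this identification, the parametrization of simple $\mathcal{S}_{q}^{\vartriangle}(1,r)$-modules given by \cite{KX}, Theorem 4.1 reduces to a parametrization by isomorphism classes of simple $\mathcal{J}$-modules.

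To produce the bijection in the form stated, I would then use the algebra homomorphism $\Phi:\mathcal{S}_{q}^{\vartriangle}(1,r)\rightarrow \mathcal{J}$ from Theorem 4.1 to pull back any simple $\mathcal{J}$-module $E$ to an $\mathcal{S}_{q}^{\vartriangle}(1,r)$-module $E_{\Phi}$. For part (a), I would argue that $E_{\Phi}$ is nonzero and, via the cell-ideal filtration of [Cu2, Theorem 4.1], has a well-defined composition factor $M$ supported on $c_{0}^{\vartriangle}$; this is exactly the simple module attached to $E$ under the \cite{KX} parametrization. For part (b), the map $E\mapsto M$ is injective because different simples $E$ yield different quotients of the cell layer, and surjective because every simple $\mathcal{S}_{q}^{\vartriangle}(1,r)$-module arises from the unique cell and hence from some simple module of $\mathcal{J}$.

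The main obstacle is not a deep one but a bookkeeping one: the commutative algebra appearing in the cell datum of \cite{KX}, Theorem 4.1 is read off from the affine cellular structure constructed in [Cu2], and one must verify that this algebra is indeed $\mathcal{J}$ (and not merely Morita-equivalent to it), and that the action of $\mathcal{S}_{q}^{\vartriangle}(1,r)$ on the resulting simple module factors through $\Phi$. This is the only step that requires a genuine comparison between the cellular construction of [Cu2] and the asymptotic homomorphism $\Phi$; once this matching is made, both (a) and (b) are immediate consequences of \cite{KX}, Theorem 4.1.
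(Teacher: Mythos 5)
Your proposal is correct and follows essentially the same route as the paper, which likewise obtains both parts as a direct application of [KX, Theorem 4.1] to the affine cellular structure of $\mathcal{S}_{q}^{\vartriangle}(1,r)$ established in [Cu2, Theorem 4.1]. The paper states no further details, so your added bookkeeping (the single two-sided cell, the identification of the governing commutative algebra with $\mathcal{J}$, and the compatibility with $\Phi$) is simply a fuller account of the same argument.
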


When $k=\mathbb{C}$ is the complex number field, from Proposition 4.2, we get that all the simple modules of the $\mathbb{C}$-algebra $\mathbb{C}\otimes_{\mathbb{Z}} \mathcal{J}$ all have dimension $1,$ and the set of isomorphism classes of such modules is in bijection with the semisimple conjugacy classes of $G_{\mu}=GL_{r}(\mathbb{C}).$ Recall that $\mathcal{J}\cong R(G_{\mu}),$ for each semisimple conjugacy class $s$ in $GL_{r}(\mathbb{C}),$ we have a simple representation $\psi_{s}$ of $\mathcal{J}$:$$\psi_{s}: R(G_{\mu})\rightarrow \mathbb{C},~~~m\mapsto tr(s, m).$$
Any simple representation of $\mathcal{J}$ over $\mathbb{C}$ is isomorphic to some $\psi_{s}$ (see [Xi1]). So we have
\begin{corollary}
The isomorphism classes of simple $\mathcal{S}_{q}^{\vartriangle}(1,r)$-modules over $\mathbb{C}$ are parametrized by the semisimple conjugacy  classes in $GL_{r}(\mathbb{C}).$
\end{corollary}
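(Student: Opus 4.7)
The plan is to chain together two bijections. First, apply Corollary 6.1 with the noetherian domain $k=\mathbb{C}$: this yields a bijection $E\mapsto M$ between isomorphism classes of simple $\mathbb{C}\otimes_{\mathbb{Z}}\mathcal{J}$-modules and isomorphism classes of simple $\mathcal{S}_{q}^{\vartriangle}(1,r)$-modules over $\mathbb{C}$, where $M$ is the distinguished composition factor of $E_{\Phi}$ singled out there. So it suffices to parametrize the simple $\mathbb{C}\otimes_{\mathbb{Z}}\mathcal{J}$-modules by the semisimple conjugacy classes of $GL_{r}(\mathbb{C})$.

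For this I would invoke the identification $\mathcal{J}\cong R(GL_{r}(\mathbb{C}))\cong \mathbb{Z}[X_{1},\ldots,X_{r-1},X_{r},X_{r}^{-1}]$ recorded just before Corollary 6.1. This is a special case of Proposition 4.2(1) with $n=1$ and $\mu=(r)$: there is a unique two-sided cell, $N_{\mu}=1$ (empty product), so the matrix ring $\mathcal{T}_{\mu}$ collapses to the representation ring $B_{\mu}=R(G_{\mu})$ with $G_{\mu}=GL_{r}(\mathbb{C})$. Consequently $\mathbb{C}\otimes_{\mathbb{Z}}\mathcal{J}$ is a commutative finitely generated $\mathbb{C}$-algebra, so its simple modules are one-dimensional and are classified by the algebra homomorphisms (i.e.\ characters) $\mathbb{C}\otimes_{\mathbb{Z}} R(GL_{r}(\mathbb{C}))\to \mathbb{C}$.

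Finally, the classical fact recalled in the excerpt (see also [Xi1]) is that these characters are precisely the evaluations $\psi_{s}\colon m\mapsto \operatorname{tr}(s,m)$ indexed by the semisimple conjugacy classes $s$ in $GL_{r}(\mathbb{C})$. Composing the two bijections $s\leftrightarrow \psi_{s}$ and $\psi_{s}\leftrightarrow M$ then gives the desired parametrization. I do not anticipate a serious obstacle: all the substantive content is packaged into Corollary 6.1 and Proposition 4.2, and what remains is only the standard parametrization of characters of $R(GL_{r}(\mathbb{C}))$. The one point to watch is that ``simple $\mathcal{J}$-module'' in Corollary 6.1 must be interpreted after base change to $\mathbb{C}$, which is exactly the context in which [KX, Theorem 4.1] is being applied.
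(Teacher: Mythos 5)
Your proposal is correct and follows essentially the same route as the paper: it chains the bijection of Corollary 6.1 (from affine cellularity and [KX, Theorem 4.1]) with the identification $\mathcal{J}\cong R(GL_{r}(\mathbb{C}))$ from Proposition 4.2 and the standard parametrization of the characters $\psi_{s}$ by semisimple conjugacy classes as in [Xi1]. Your added remarks (that $N_{\mu}=1$ collapses $\mathcal{T}_{\mu}$ to $B_{\mu}$, and that simplicity must be read after base change to $\mathbb{C}$) are accurate clarifications of what the paper leaves implicit.
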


For the affine $q$-Schur algebra $\mathcal{S}_{q}^{\vartriangle}(n,1),$ since it has only one two-sided cell, we have a similar discussion as above.

Applying [KX, Theorem 4.4], we can get the following corollary.
\begin{corollary}
Let $\mathcal{S}$ stands for the affine $q$-Schur algebra $\mathcal{S}_{q}^{\vartriangle}(1,r)$ or $\mathcal{S}_{q}^{\vartriangle}(n,1).$ Let $k$ be a noetherian domain and $\mathcal{S}_{k}=k\otimes_{\mathbb{Z}}\mathcal{S}.$ Then we get

$(a)$ The parameter set of simple $\mathcal{S}_{k}$-modules equals the parameter set of simple modules of the asymptotic algebra, so it is an affine space.

$(b)$ The unbounded derived module category $D(\mathcal{S}_{k}$-$\mathrm{Mod})$ of $\mathcal{S}_{k}$ admits a stratification whose section is the derived category of the affine $k$-algebra $B.$

$(c)$ $\mathcal{S}_{k}$ has finite global dimension provided that $k$ has that.
\end{corollary}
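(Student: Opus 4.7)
The strategy is to verify the hypotheses of [KX, Theorem 4.4] for each of the two algebras and then read off conclusions (a), (b), (c). The theorem of Koenig and Xi takes as input an affine cellular algebra whose cell chain has well-behaved section algebras (commutative affine $k$-algebras) and, provided the chain is finite and each layer is of the appropriate form, yields precisely the three statements in the corollary. So the plan reduces to: \emph{(i)} produce an affine cell chain for $\mathcal S$; \emph{(ii)} identify the section algebras with known commutative affine rings; \emph{(iii)} invoke [KX, Theorem 4.4].

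For $\mathcal S_{q}^{\vartriangle}(1,r)$, step (i) is already done: [Cu2, Theorem 4.1] records the affine cellular structure. Step (ii) is now essentially trivial, because by 4.5 this algebra has a \emph{single} two-sided cell $c_{0}^{\vartriangle}$ corresponding to $\mu=(r)$, and Proposition 4.2(1) identifies the associated asymptotic block $\mathcal J_{\vartriangle}(1,r)_{\mathbf c_{\mu}}=\mathcal J$ with a matrix ring (in fact, a ring of rank $1$) over $B_{\mu}=R(GL_{r}(\mathbb C))\cong\mathbb Z[X_{1},\ldots,X_{r-1},X_{r},X_{r}^{\pm 1}]$. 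Thus the affine $k$-algebra $B$ appearing as the ``section'' in [KX, Theorem 4.4] is the polynomial-Laurent ring $k[X_{1},\ldots,X_{r-1},X_{r}^{\pm 1}]$, which is commutative, noetherian when $k$ is noetherian, and of finite global dimension when $k$ has finite global dimension. The parameter set of simple $\mathcal J$-modules is then an affine space, matching (a); (b) follows because [KX, Theorem 4.4] provides exactly such a stratification of $D(\mathcal S_{k}\text{-Mod})$; and (c) follows because a finite chain of ideals with sections of finite global dimension forces $\mathcal S_{k}$ itself to have finite global dimension.

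For $\mathcal S_{q}^{\vartriangle}(n,1)$ the argument is parallel. When $r=1$ the combinatorics collapses: $\Lambda_{\vartriangle}(n,1)$ is finite, there is only one two-sided cell, and the asymptotic block $\mathcal J_{\vartriangle}(n,1)_{\mathbf c}$ is a matrix ring over the representation ring of the corresponding Levi factor (analogous to Proposition 4.2 in the $(1,r)$ case), which again is a Laurent polynomial ring and hence a commutative affine ring of finite global dimension. Combined with the affine cellular structure (obtained as in [Cu2, Theorem 4.1]), the hypotheses of [KX, Theorem 4.4] are again met, and (a), (b), (c) follow.

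The one step requiring care — and the main obstacle — is verifying that in both cases the \emph{ideal chain} coming from the two-sided cell decomposition of 4.4(b), combined with the identifications of Proposition 4.2, literally fits the template of [KX, Theorem 4.4]: one needs to check that each layer $\mathcal S_{k}$-ideal is, up to a swap of rows and columns, a generalized matrix algebra over the affine commutative section ring with a bilinear form of the right shape. In the present situation, however, because there is only a single two-sided cell, the chain has length one and this verification is immediate from the isomorphism $\mathcal J_{\vartriangle}(n,r)_{\mathbf c_{\mu}}\cong\mathcal T_{\mu}$ of Proposition 4.2(1) together with the affine cell datum of [Cu2, Theorem 4.1]. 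Once this is in place, (a)–(c) are direct quotations of the conclusions of [KX, Theorem 4.4].
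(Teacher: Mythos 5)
Your proposal is correct and follows essentially the same route as the paper: the paper's own justification is precisely to combine the affine cellularity from [Cu2, Theorem 4.1] with the fact (from 4.5 and Proposition 4.2) that each of these algebras has a single two-sided cell whose asymptotic block is a matrix ring over a Laurent polynomial ring, and then to quote [KX, Theorem 4.4]. Your write-up merely spells out the hypothesis-checking that the paper leaves implicit.
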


6.3 In this subsection, we consider the affine $q$-Schur algebra $\mathcal{S}_{q,k}^{\vartriangle}(2,2)$ over a field $k,$ where $\mathcal{S}_{q,k}^{\vartriangle}(2,2)=k\otimes_{\mathcal{A}}\mathcal{S}_{q}^{\vartriangle}(2,2)$ and $k$ is regarded as an $\mathcal{A}$-module by specializing $t$ to a square root of $q$ belonging to $k.$ According to 4.5, it has two two-sided cells, the lowest two-sided cell $c_{0}^{\vartriangle}$ corresponds to the partition $(2)\in \mathcal{P}^{2}_{2}$ and we have $c_{0}^{\vartriangle}=\{A\in \Theta_{\vartriangle}(2,2)|~\textbf{a}(A)=1\}.$ Let $\mathcal{S}_{q,c_{0}^{\vartriangle}}^{\vartriangle}$ be the lowest two-sided ideal of $\mathcal{S}_{q,k}^{\vartriangle}(2,2),$ which is the free $k$-module spanned by $\theta_{A},$ $A\in c_{0}^{\vartriangle},$ then we get the following lemma.
\begin{lemma}
when char~$k=0$ and $1+q\neq 0,$ the lowest two-sided ideal $\mathcal{S}_{q,c_{0}^{\vartriangle}}^{\vartriangle}$ is idempotent and has a nonzero idempotent element.
\end{lemma}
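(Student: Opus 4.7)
The plan is to reduce the claim to the asymptotic algebra via the homomorphism $\Phi$ of Theorem 4.1: on the $c_{0}^{\vartriangle}$-component the asymptotic algebra is (by Proposition 4.2) a concrete matrix ring in which idempotents are manifest, and the idempotent structure should then transfer back to $\mathcal{S}_{q,k}^{\vartriangle}(2,2)$ under the hypothesis $1+q\neq 0$.

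First I would verify that $I:=\mathcal{S}_{q,c_{0}^{\vartriangle}}^{\vartriangle}$ is a two-sided ideal. For $r=2$ the maximum value of $\mathbf{a}$ on $W$ is $\nu=l(w_{0})=1$, so by property \textbf{Q4} of Proposition 5.1 any nonvanishing structure constant $g_{X,Y,C}$ with $X$ or $Y$ in $c_{0}^{\vartriangle}$ forces $\mathbf{a}(C)\geq 1$, hence $C\in c_{0}^{\vartriangle}$. Since $\Phi$ is an algebra homomorphism, the formula for $\Phi(\theta_{A})$ shows it restricts to $\Phi|_{I}\colon I\to \mathcal{J}_{\vartriangle}(2,2)_{c_{0}^{\vartriangle}}\otimes_{\mathbb{Z}}\mathcal{A}$, and by Proposition 4.2 the target is isomorphic to $\mathcal{T}_{(2)}\otimes \mathcal{A}\cong M_{4}\bigl(R(GL_{2}(\mathbb{C}))\otimes \mathcal{A}\bigr)$, since the partition $(2)\in \mathcal{P}^{2}_{2}$ gives $N_{(2)}=\binom{2}{1}^{2}=4$ and $G_{(2)}=GL_{2}(\mathbb{C})$. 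This matrix ring is obviously idempotent (its identity is $\sum_{D\in \textbf{D}_{c_{0}^{\vartriangle}}} t_{D}$), and each $t_{D}=(D,D,1_{G_{(2)}})$ is manifestly a nonzero idempotent from the multiplication rule defining $\mathcal{T}_{(2)}$.

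The key step is then to upgrade $\Phi|_{I}$ to an isomorphism after specialization to $k$. Under char~$k=0$ and $1+q\neq 0$, each Poincar\'e polynomial $h_{\mu}=t^{-l(w_{0,\mu})}\sum_{w\in W_{\mu}}t^{2l(w)}$ for $\mu\in \Lambda_{\vartriangle}(2,2)=\{(2,0),(1,1),(0,2)\}$ specializes to either $1$ or $q^{-1/2}(1+q)$, both of which are units of $k$. Proposition 3.2 gives $g_{A,D,B}=h_{\mu}^{-1}h_{\sigma(A),\sigma(D),\sigma(B)}$ with $\mu=co(A)$, and the coefficient of $t^{\mathbf{a}(\sigma(B))}=t$ in $h_{\sigma(A),\sigma(D),\sigma(B)}$ is $\gamma_{\sigma(A),\sigma(D),\sigma(B)}$. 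Tracking these leading $t$-orders, the matrix of $\Phi_{k}|_{I\otimes k}$ agrees (after rescaling by the unit $h_{\mu}$) with the structure matrix identifying $\mathcal{J}_{\vartriangle}(2,2)_{c_{0}^{\vartriangle}}$ with $\mathcal{T}_{(2)}$, so a triangularity argument forces $\Phi_{k}|_{I\otimes k}$ to be an isomorphism of $k$-algebras. Pulling back $\sum_{D}t_{D}$ yields a two-sided identity for $I\otimes k$ (giving $I^{2}=I$), and pulling back any single $t_{D}$ yields a nonzero idempotent in $I$. The main obstacle I expect is precisely this invertibility step: one must verify that lower-$t$-order terms do not obstruct surjectivity on $I\otimes k$; the hypothesis $1+q\neq 0$ is exactly what is needed, since otherwise $h_{(2,0)}=q^{-1/2}(1+q)$ vanishes in $k$ and the change of basis $\{\theta_{A}\}\leftrightarrow\{t_{A}\}$ degenerates on the cell.
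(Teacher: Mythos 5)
Your overall strategy (transfer idempotency from the asymptotic algebra back through $\Phi$) is reasonable in spirit, but the central step --- that $\Phi_{k}$ restricted to $I\otimes k$ is an isomorphism onto $\mathcal{J}_{\vartriangle}(2,2)_{c_{0}^{\vartriangle}}\otimes k$ --- is exactly the hard point, and the ``triangularity argument'' you invoke does not exist. The triangularity in the proof of Theorem 4.1 says the transition matrix $(\xi_{A,B})$ is the identity plus terms in strictly positive powers of $t$; this makes its determinant nonzero in $\mathbb{Q}(t)$, but after specializing $t$ to a square root of $q$ in $k$ that determinant can perfectly well vanish, and within a single two-sided cell there is no partial order that lets you argue entry by entry. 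Your observation that the $h_{\mu}$ are units when $1+q\neq 0$ only controls the passage between the structure constants $g$ and $h$; it says nothing about the invertibility over $k$ of the full matrix of Laurent polynomials $\sum_{D}g_{A,D,B}$ on the cell. Whether the lowest two-sided ideal maps isomorphically onto its asymptotic counterpart after specialization is precisely the content of Xi's theorem on the lowest two-sided ideal of an affine Hecke algebra (the hypothesis ``$q$ not a root of the Poincar\'{e} polynomial $1+q$'' is the hypothesis of that theorem); it is a genuine theorem, not a consequence of degree bookkeeping, and your proposal assumes what has to be proved.

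The paper's proof is more elementary and sidesteps this entirely: it checks, basis element by basis element, that every $\theta_{A}$ with $A=(\lambda,w_{A},\nu)\in c_{0}^{\vartriangle}$ is a product of two elements of the ideal. When $\nu=(2)$ this is Lemma 6.1 (namely $\theta_{A}\theta_{((2),1,\nu)}=\theta_{A}$), when $\lambda=(2)$ it is Lemma 4.2, and only in the remaining case $\lambda=\nu=(1,1)$ --- where $\theta_{A}$ lies in the copy of $\mathcal{H}_{\vartriangle}(2)$ inside $\mathcal{S}_{q}^{\vartriangle}(2,2)$ --- does it appeal to [Xi3, Theorem 3.2 (a)] and [KX, Theorem 5.7], which is where char~$k=0$ and $1+q\neq 0$ actually enter. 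The nonzero idempotent is exhibited directly: $D=((2),1,(2))$ satisfies $\theta_{D}\theta_{D}=\theta_{D}$, with no need to invert $\Phi$. If you want to keep your route, you must either prove the specialized isomorphism (essentially redoing Xi's work) or cite it explicitly at that step.
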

\begin{proof}
For any $A=(\lambda,w_{A},\nu)\in c_{0}^{\vartriangle},$ we choose $\mu=(2)\in \Lambda_{\vartriangle}(2,2),$ then $D'=(\mu,1,\nu)\in c_{0}^{\vartriangle}.$ If $\nu=(2)\in \Lambda_{\vartriangle}(2,2),$ then from Lemma 6.1, we have $\theta_{A}\theta_{D'}=\theta_{A}.$ If $\lambda=(2)\in \Lambda_{\vartriangle}(2,2),$ then from Lemma 4.2, we have $\theta_{D''}\theta_{A}=\theta_{A},$ where $D''=(\lambda,1,\mu)\in c_{0}^{\vartriangle}.$ If $\lambda=\nu=\omega=(11)\in \Lambda_{\vartriangle}(2,2),$ then $A=(\omega,w_{A},\omega)$ can be identified with an element $w_{A}$ in $\mathcal{H}_{\vartriangle}(2)$ and $\{A\}$ is identified with $C_{w_{A}}.$ when char~$k=0$ and $1+q\neq 0,$ applying [KX, Theorem 5.7] and [Xi3, Theorem 3.2 (a)], we see that $\{A\}$ can also be written as a product of two elements in $\mathcal{S}_{q,c_{0}^{\vartriangle}}^{\vartriangle}.$ For $\mu=(2)\in \Lambda_{\vartriangle}(2,2),$ then $D=(\mu,1,\mu)\in c_{0}^{\vartriangle}$ satisfies $\theta_{D}\theta_{D}=\theta_{D}.$
\end{proof}

In [Cu2, Theorem 4.1], we have shown that the affine $q$-Schur algebra $\mathcal{S}_{q}^{\vartriangle}(2,2)$ is affine cellular in the sense of Koenig and Xi. Applying [KX, Theorem 4.3 and 4.4], from Lemma 6.2 we get the following corollary.
\begin{corollary}
Assume that char~$k=0$ and $1+q\neq 0.$ Then we get

$(a)$ The parameter set of simple $\mathcal{S}_{q,k}^{\vartriangle}(2,2)$-modules equals the parameter set of simple modules of the asymptotic algebra, so it is a finite union of affine spaces.

$(b)$ The unbounded derived module category $D(\mathcal{S}_{q,k}^{\vartriangle}(2,2)$-$\mathrm{Mod})$ of $\mathcal{S}_{q,k}^{\vartriangle}(2,2)$ admits a stratification whose strata are the derived categories of the various affine $k$-algebras $B_{\lambda}.$

$(c)$ $\mathcal{S}_{q,c_{0}^{\vartriangle}}^{\vartriangle}$ is a projective $\mathcal{S}_{q,k}^{\vartriangle}(2,2)$-module and $\mathcal{S}_{q,k}^{\vartriangle}(2,2)$ has finite global dimension.
\end{corollary}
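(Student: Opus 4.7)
The plan is to deduce all three parts by feeding the algebra $\mathcal{S}_{q,k}^{\vartriangle}(2,2)$ into the general machinery of Koenig--Xi from [KX, Theorem 4.3 and 4.4], the two required inputs being the affine cellular structure from [Cu2, Theorem 4.1] and the idempotency of the lowest cell ideal just proved in Lemma 6.2. Those theorems of [KX] start from an affine cellular algebra whose chain of cell ideals consists of idempotent ideals that each contain a nonzero idempotent, and output precisely the three conclusions we want: a layer-by-layer parametrisation of simple modules via the asymptotic algebra, a derived stratification whose strata are the derived categories of the layer algebras, and inheritance of finite global dimension from the layer algebras.

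The first step is to spell out the cell chain. By 4.5, $\mathcal{S}_{q,k}^{\vartriangle}(2,2)$ has exactly two two-sided cells, indexed by the partitions $(2)$ and $(1,1)$ of $2$, so the only nontrivial piece of the cell filtration is $0\subset \mathcal{S}_{q,c_{0}^{\vartriangle}}^{\vartriangle}\subset \mathcal{S}_{q,k}^{\vartriangle}(2,2)$. The upper layer is the full unital algebra, so it automatically satisfies the idempotency hypothesis; for the lower layer, the standing assumptions char $k=0$ and $1+q\neq 0$ activate Lemma 6.2, giving that $\mathcal{S}_{q,c_{0}^{\vartriangle}}^{\vartriangle}$ is an idempotent ideal containing a nonzero idempotent. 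By Proposition 4.2 the asymptotic algebra decomposes as a finite sum of matrix algebras over the commutative affine $k$-algebras $B_{\lambda}=R(G_{\lambda})$ for $\lambda\in\mathcal{P}_{2}^{2}$; in particular each $B_{\lambda}$ is an affine commutative $k$-algebra of finite global dimension.

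With these ingredients assembled, part (a) and part (b) come directly from [KX, Theorem 4.3]: simple modules are captured stratum by stratum by the simple modules of the asymptotic algebra, and via Corollary 6.1 the latter are parametrised by semisimple conjugacy classes in the $G_{\lambda}$, giving a finite union of affine spaces; the same filtration induces the claimed derived stratification with strata $D(B_{\lambda}\text{-}\mathrm{Mod})$. For part (c), [KX, Theorem 4.4] yields at once that $\mathcal{S}_{q,c_{0}^{\vartriangle}}^{\vartriangle}$, being an idempotent ideal generated by an idempotent, is projective as a left $\mathcal{S}_{q,k}^{\vartriangle}(2,2)$-module, and that the finite global dimension of each $B_{\lambda}$ lifts through the two layers of the filtration to $\mathcal{S}_{q,k}^{\vartriangle}(2,2)$.

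The only genuine obstacle in this argument is the verification that the hypotheses of [KX, Theorem 4.3 and 4.4] are actually met for the lowest cell ideal, i.e.\ establishing that $\mathcal{S}_{q,c_{0}^{\vartriangle}}^{\vartriangle}$ is idempotent with an idempotent element. This is handled upstream by Lemma 6.2, whose hard case is the weight $\omega=(1,1)$ and which relies on writing the relevant basis element as a product via [Xi3, Theorem 3.2 (a)]; once Lemma 6.2 is in hand, the corollary is a direct invocation of [KX].
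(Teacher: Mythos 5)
Your proposal is correct and follows essentially the same route as the paper, which derives the corollary by combining the affine cellularity of $\mathcal{S}_{q}^{\vartriangle}(2,2)$ from [Cu2, Theorem 4.1] with the idempotency of the lowest cell ideal established in Lemma 6.2, and then invoking [KX, Theorem 4.3 and 4.4]. Your write-up in fact supplies more detail than the paper does (the explicit two-step cell chain and the identification of the layer algebras $B_{\lambda}$), but the underlying argument is identical.
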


\hspace*{-0.5cm}$\mathbf{Remark ~6.1.}$ The case of $\mathcal{S}_{q,k}^{\vartriangle}(n,1), n> 1$ has also been considered in [Cu1] from the quantum group approach. In [Cu1], we have proved that the affine $q$-Schur algebras $\mathcal{S}_{q,k}^{\vartriangle}(n,r)$ over a noetherian domain $k,$ when $n> r,$ have finite global dimension provided that $k$ has that, and we conjecture that they always have finite global dimension. Furthermore, we conjecture that the finite global dimension is always even, I am grateful to Professor Changchang Xi for pointing it out.

\section{The identification of two bases}

7.1 We first give a geometric interpretation of the affine $q$-Schur algebra $\mathfrak{U}_{r,n,n}$ following [L7] (see also [GV]). Thus, let $V_{\epsilon}$ be a free $\mathbf{k}[\epsilon,\epsilon^{-1}]$-module of rank $r$, where $\mathbf{k}$ is a finite field of $q$ elements, and $\epsilon$ is an indeterminate. A lattice in $V_{\epsilon}$ is, by definition, a $\mathbf{k}[\epsilon]$-submodule $L$ of $V_{\epsilon}$ such that there exists a $\mathbf{k}[\epsilon]$-basis of $L$ which is also a $\mathbf{k}[\epsilon,\epsilon^{-1}]$-basis of $V_{\epsilon}.$

Let $\mathcal{F}^{n}$ be the space of $n$-step periodic lattices, that is, sequences $\mathbf{L}=(L_{i})_{i\in \mathbb{Z}}$ of lattices in our free module $V_{\epsilon}$ such that $L_{i} \subset L_{i+1}$ and $L_{i-n}=\epsilon L_{i}$. The group $\mathrm{G=Aut}(V_{\epsilon})$ acts on $\mathcal{F}^{n}$ in the natural way. Let $\mathfrak{S}_{r, n}$ be the set of nonnegative integer sequences $(a_{i})_{i\in \mathbb{Z}}$ such that $a_{i}=a_{i+n}$ and $\sum_{i=1}^{n}a_{i}=r$, and let $\mathfrak{S}_{r, n,n}$ be the set of $\mathbb{Z}\times \mathbb{Z}$ matrices $A=(a_{i,j})_{i,j\in \mathbb{Z}}$ with nonnegative entries such that $a_{i,j}=a_{i+n,j+n}$ and $\sum_{i\in [1,n],j\in \mathbb{Z}}a_{i,j}=r.$ The orbits of $\mathrm{G}$ on $\mathcal{F}^{n}$ are indexed by $\mathfrak{S}_{r, n}$, where $\mathbf{L}$ is in the orbit $\mathcal{F}_{\mathbf{a}}$ corresponding to $\mathbf{a}$ if $a_{i}=\mathrm{dim}_{\mathbf{k}}(L_{i}/L_{i-1})$. The orbits of $\mathrm{G}$ on $\mathcal{F}^{n}\times \mathcal{F}^{n}$ are indexed by the matrices $\mathfrak{S}_{r, n,n}$, where a pair $(\mathbf{L},\mathbf{L}')$ is in the orbit $\mathcal{O}_{A}$ corresponding to $A$ if $$a_{i,j}=\mathrm{dim}~\bigg(\frac{L_{i}\cap L_{j}'}{(L_{i-1}\cap L_{j}')+(L_{i}\cap L_{j-1}')}\bigg).$$
For $A\in \mathfrak{S}_{r, n,n},$ let $r(A), c(A)\in \mathfrak{S}_{r, n}$ be given by $r(A)_{i}=\sum_{j\in \mathbb{Z}}a_{i,j}$ and $c(A)_{j}=\sum_{i\in \mathbb{Z}}a_{i,j}.$

Similarly, let $\mathcal{B}^{r}$ be the space consisting of complete periodic lattices, that is, sequences of $\mathbf{L}=(L_{i})_{i\in \mathbb{Z}}$ such that $L_{i} \subset L_{i+1}$ and $L_{i-r}=\epsilon L_{i},$ and $\mathrm{dim}_{\mathbf{k}}(L_{i}/L_{i-1})=1$ for all $i\in \mathbb{Z}$. Let $\mathbf{b}_{0}=(\cdots,1,1,\cdots)$. The orbits of $\mathrm{G}$ on $\mathcal{B}^{r}\times \mathcal{B}^{r}$ are indexed by matrices $\mathfrak{S}_{r, r,r}$, where the matrix $A$ must have $r(A)=c(A)=\mathbf{b}_{0}.$

Let $\mathfrak{U}_{r, q}$ be the span of the characteristic functions of the $\mathrm{G}$-orbits on $\mathcal{F}^{n}\times \mathcal{F}^{n}$, then convolution makes $\mathfrak{U}_{r, q}$ an algebra. For any $A, A',A''\in \mathfrak{S}_{r, n,n}$ and any $q,$ a power of prime number, we denote by $g_{A,A',A'';q}$ the number of elements in the (finite) set $$\{L'\in \mathcal{F}_{co(A)}|~(L,L')\in \mathcal{O}_{A}, (L',L'')\in \mathcal{O}_{A'}\},~~\mathrm{where}~(L,L'')~\mathrm{is~fixed~in~}\mathcal{O}_{A''}.$$
It is a well-known property that we can find $g_{A,A',A''}\in \mathcal{A}$ such that $g_{A,A',A'';q}=g_{A,A',A''}|_{v=q^{\frac{1}{2}}}$ for any prime power $q.$

Following [L7], we define $\mathfrak{U}_{r,n,n}$ to be the free $\mathcal{A}$-module with basis $\{e_{A}|~A\in \mathfrak{S}_{r, n,n}\},$ and multiplication defined by $$e_{A}e_{A'}=\begin{cases}\sum\limits_{A''\in \mathfrak{S}_{r, n,n}}g_{A,A',A''}e_{A''}& \hbox {if } co(A)=ro(A'); \\~~~~~~~~~~0& \hbox {otherwise}.\end{cases}$$

\begin{lemma}
The map $\Upsilon: \mathfrak{U}_{r,n,n}\rightarrow \mathcal{S}_{q}^{\vartriangle}(n,r),$ $e_{A}\mapsto \phi_{A}$ is an isomorphism of algebras.
\end{lemma}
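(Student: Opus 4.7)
The strategy is threefold: identify the bases, match the units, then match structure constants. By the bijection $j'_\vartriangle : \mathcal{B}' \to \Theta_\vartriangle(n,r)$ recalled in 3.2, the set $\mathfrak{S}_{r,n,n} = \Theta_\vartriangle(n,r)$ is in canonical bijection with the index set of the standard basis $\{\phi_A\}_{A\in \mathcal{B}}$ of $\mathcal{S}_q^\vartriangle(n,r)$ (using the shorthand $\phi_B = \phi_{\lambda,\mu}^{w_B}$ from 3.4). Hence $\Upsilon$ is a bijection on bases and thus an $\mathcal{A}$-module isomorphism. The identity of $\mathfrak{U}_{r,n,n}$ is $\sum_{\lambda} e_{D_\lambda}$, summed over diagonal matrices $D_\lambda$ with $ro(D_\lambda) = co(D_\lambda) = \lambda$; this matches the identity $\sum_\lambda \phi_\lambda$ of $\mathcal{S}_q^\vartriangle(n,r)$ under $\Upsilon$.

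For the multiplicative structure, the vanishing case is immediate: when $co(A) \neq ro(A')$, one has $e_A e_{A'} = 0$ by definition of $\mathfrak{U}_{r,n,n}$, and $\phi_A \circ \phi_{A'} = 0$ via the Kronecker factor $\delta_{\mu\nu}$ in (3.1). The nontrivial case is $co(A) = ro(A') = \mu$. My plan is to invoke the geometric realization of $\mathcal{H}_\vartriangle(r)$ by Iwahori--Matsumoto: $T_w$ corresponds, up to a normalization by a power of $q$, to the characteristic function of the $\mathrm{G}$-orbit on $\mathcal{B}^r \times \mathcal{B}^r$ labelled by $w$, and the double-coset sum $T_{W_\lambda w W_\mu} = \phi_{\lambda,\mu}^w(x_\mu)$ corresponds to the characteristic function of the orbit on $\mathcal{F}^n \times \mathcal{F}^n$ indexed by the matrix $A = j'_\vartriangle(\lambda, w^+, \mu)$ under the projection $\mathcal{B}^r \to \mathcal{F}^n$. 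Passing to endomorphism algebras gives
\[
\mathcal{S}_q^\vartriangle(n,r) = \operatorname{End}_{\mathcal{H}_\vartriangle(r)}\Bigl(\bigoplus_\lambda x_\lambda \mathcal{H}_\vartriangle(r)\Bigr) \;\cong\; \mathfrak{U}_{r,n,n},
\]
with $\phi_A \mapsto e_A$; this is essentially the content of [L7] and [VV, §7]. Unwinding, $\Upsilon$ is an algebra isomorphism.

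The main obstacle is making the correspondence explicit for the structure constants. Concretely one must verify: (a) the composition $(\phi_A \circ \phi_{A'})(x_\rho)$ expands in the standard basis $\{T_y\}_{y\in W}$ of $\mathcal{H}_\vartriangle(r)$ as the product of double-coset sums $T_{W_\lambda w_A W_\mu} \cdot T_{W_\mu w_{A'} W_\rho}$ (up to a normalizing factor), and (b) under Iwahori--Matsumoto this Hecke-algebraic product translates, via counting intermediate elements $y \in W_\lambda w_A W_\mu \cdot W_\mu w_{A'} W_\rho$ lying in a prescribed double coset, into the geometric count of intermediate lattices $L' \in \mathcal{F}_{co(A)}$ with $(L,L') \in \mathcal{O}_A$ and $(L',L'') \in \mathcal{O}_{A'}$ for fixed $(L, L'') \in \mathcal{O}_{A''}$. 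Since the latter is by definition $g_{A,A',A''}$, the coefficient of $\phi_{A''}$ in $\phi_A \circ \phi_{A'}$ equals $g_{A,A',A''}$, completing the proof. Alternatively, if one wishes to avoid the geometric apparatus, one may directly derive the same conclusion from the explicit multiplication formulas for the standard basis given in [Gr, Ch.\ 2] or [DF1].
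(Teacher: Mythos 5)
The paper offers no proof of this lemma at all: it is stated as a known identification, implicitly imported from [Gr], [GV], [VV] and [L7], so there is no argument of the author's to compare yours against. Your sketch is the standard route to the result and is essentially correct: match the index sets via $j_{\vartriangle}$, check the units, dispose of the $co(A)\neq ro(A')$ case, and then compare structure constants through the Iwahori--Matsumoto realization. The one place where you should be more careful is the bookkeeping of normalizations in your steps (a) and (b). On the algebraic side one has $(\phi_{A}\circ\phi_{A'})(x_{\rho})=T_{W_{\lambda}w_{A}W_{\mu}}h_{A'}$ where $x_{\mu}h_{A'}=T_{W_{\mu}w_{A'}W_{\rho}}$, and since $T_{W_{\lambda}w_{A}W_{\mu}}x_{\mu}=\bigl(\sum_{w\in W_{\mu}}q^{l(w)}\bigr)T_{W_{\lambda}w_{A}W_{\mu}}$ by Lemma 2.1(1), the product of double-coset sums overshoots the composition by exactly the Poincar\'e polynomial $P_{W_{\mu}}(q)$. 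On the geometric side, pulling the orbit $\mathcal{O}_{A}$ back from $\mathcal{F}_{\lambda}\times\mathcal{F}_{\mu}$ to $\mathcal{B}^{r}\times\mathcal{B}^{r}$ and convolving there introduces the fiber count $|\mathrm{P}_{\mu}/\mathrm{B}|=P_{W_{\mu}}(q)$ as well; it is the cancellation of these two factors that yields the clean identity $\phi_{A}\circ\phi_{A'}=\sum_{A''}g_{A,A',A''}\phi_{A''}$ with no rescaling of $e_{A}$. You gesture at this with ``up to a normalizing factor'' but do not close the loop; writing out that cancellation (or, as you suggest, citing the explicit standard-basis multiplication formulas of [Gr, Ch.~2] or [DF1]) is what turns the sketch into a proof.
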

Under this isomorphism $\Upsilon$, we will identify $\mathfrak{S}_{r, n}$ with $\Lambda_{\vartriangle}(n,r),$ $\mathfrak{S}_{r, n,n}$ with $\Theta_{\vartriangle}(n,r).$ For $A\in \mathfrak{S}_{r, n,n}$ and $L\in \mathcal{F}_{ro(A)},$ we set$$X_{A}^{L}=\{L'\in \mathcal{F}_{co(A)}|~(L,L')\in \mathcal{O}_{A}\},~~~d_{A}=\sum\limits_{i\geq k, j<l, 1\leq i\leq n}a_{i,j}a_{k,l}.$$
Then we have dim$(X_{A}^{L})=$dim$(\bar{X}_{A}^{L})=d_{A}$ by [L7, Lemma 4.3]. If we set $[A]=v^{-d_{A}}e_{A},$ then $\{[A]|~A\in \mathfrak{S}_{r, n,n}\}$ is also an $\mathcal{A}$-basis of $\mathfrak{U}_{r,n,n}.$\vskip2mm

7.2 We shall, from now on, assume that $\mathbf{k}$ is an algebraically closed field. We can keep the importance case of a finite flag variety in mind if we are unfamiliar with Kac-Moody flag varieties, the standard reference for Kac-Moody flag varieties is [Kum]. For each $\lambda\in \Lambda_{\vartriangle}(n,r),$ we denote by $\mathrm{P}_{\lambda}$ the standard parabolic subgroup of $\mathrm{G}=GL_{r}(\mathbf{k}[\epsilon,\epsilon^{-1}]),$ which is defined as follows: given $\lambda\in \Lambda_{\vartriangle}(n,r),$ we can get a set $I=\{(i_1,i_2,\ldots,i_n)|~0=i_1<i_2<\cdots <i_n\}\subseteq [0,r-1],$ where $i_{j}=\lambda_1+\cdots+\lambda_{j-1},$ if we set $i_{n+1}=r,$ then we also have $\lambda_{j}=i_{j+1}-i_{j}.$ Consider the complement $\hat{I}=[0, r-1]\backslash I,$ then the partial flag variety corresponding to $\hat{I}$ is:\vskip3mm $~\mathrm{G}/\mathrm{P}_{\lambda}\cong \bigg\{(\cdots\supseteq \Lambda_{n}\supseteq\Lambda_{n-1}\supseteq\cdots\supseteq\Lambda_{1}\supseteq\cdots)\bigg|\substack{\Lambda_{j} \mathrm{~a ~lattice,~} \Lambda_{j-n}=\epsilon\Lambda_{j},\\ \mathrm{dim}_{\mathbf{k}}\big(\Lambda_{j}/\Lambda_{j-1}\big)=\lambda_{j}} \bigg\}.$\vskip3mm

Let $e_1,e_2,\ldots,e_{r}$ denote the standard $\mathbf{k}[\epsilon,\epsilon^{-1}]$-basis of $V_{\epsilon},$ and for $c\in \mathbb{Z},$ define $e_{i+cr}$: $=\epsilon^{c}e_{i}.$ Consider the following family of standard $\mathbf{k}[\epsilon]$-lattices: $E_{j}$: $=$ span$_{\mathbf{k}[\epsilon]}\{e_{r-j+1},\ldots,e_{2r-j}\}$ for $j=1,2,\ldots,r.$ The affine Grassmannian $Gr(V_{\epsilon})$ is the span of all $\mathbf{k}[\epsilon]$-lattices of $V_{\epsilon},$ which is a homogeneous space with respect to the obvious action of $\mathrm{G},$ and the stabilizer of the standard lattice $E_{r}$ is $\mathrm{P}_{(r)}$: $=GL_{r}(\mathbf{k}[\epsilon]),$ thus we have $Gr(V_{\epsilon})\cong \mathrm{G}/\mathrm{P}_{(r)}.$ The standard flag is $E$: $=(\cdots\supset E_{r}\supset E_{r-1}\supset\cdots\supset E_{1}\supset\cdots),$ whose stabilizer in $\mathrm{G}$ is just the Borel subgroup $\mathrm{B}\subset \mathrm{P}_{(r)},$ and we have $\mathcal{B}^{r}\cong \mathrm{G}/\mathrm{B}.$

Let $\mathcal{F}_{\lambda}$ be the $\mathrm{G}$-orbit on $\mathcal{F}^{n}$ corresponding to $\lambda\in \Lambda_{\vartriangle}(n,r)=\mathfrak{S}_{r, n},$ then we have $\mathrm{G}/\mathrm{P}_{\lambda}\cong \mathcal{F}_{\lambda},$ and we have $\mathrm{P}_{\lambda}\supset \mathrm{B}$ with Weyl group $W_{\lambda}$: $=\{s_{i}\}_{i\in \hat{I}}\subseteq \mathfrak{S}_{r}.$

Recall that the orbits of $\mathrm{G}$ on $\mathcal{F}^{n}\times \mathcal{F}^{n}$ are indexed by the matrices $\mathfrak{S}_{r, n,n}=\Theta_{\vartriangle}(n,r).$ For $A\in \Theta_{\vartriangle}(n,r),$ we denote by $\mathcal{O}_{A}$ the corresponding orbit, which contains the representative $(\mathrm{P}_{\lambda}, w_{A}\mathrm{P}_{\mu})$ with $(\lambda,w_{A},\mu)\in \mathcal{B}$ and $j_{\vartriangle}(\lambda,w_{A},\mu)=A.$ Since the stabilizer of this representative $(\mathrm{P}_{\lambda}, w_{A}\mathrm{P}_{\mu})$ is $\mathrm{P}_{\lambda}\cap \mathrm{P}_{\mu}^{w_{A}},$ then we have $$\mathcal{O}_{A}\cong \mathrm{G}/(\mathrm{P}_{\lambda}\cap \mathrm{P}_{\mu}^{w_{A}})\cong \mathrm{G}\times^{\mathrm{P}_{\lambda}} \mathrm{P}_{\lambda}/(\mathrm{P}_{\lambda}\cap \mathrm{P}_{\mu}^{w_{A}}).$$

Let $pr_{1}: \mathcal{O}_{A}\rightarrow \mathcal{F}^{n}$ be the first projection on the first factor, then we have $pr_{1}(\mathcal{O}_{A})=\mathcal{F}_{\lambda}\cong \mathrm{G}/\mathrm{P}_{\lambda},$ and $\mathcal{O}_{A}\cong \mathrm{G}\times^{\mathrm{P}_{\lambda}} X_{A}^{L},$ where $L\in \mathcal{F}_{\lambda}.$ Consider the natural action of $\mathrm{P}_{\lambda}$ on $X_{A}^{L},$ it is easy to see that the stabilizer of a point $L'\in X_{A}^{L}$ is just $\mathrm{P}_{\lambda}\cap \mathrm{P}_{\mu}^{w_{A}}.$ Thus we have $\mathrm{P}_{\lambda}/(\mathrm{P}_{\lambda}\cap \mathrm{P}_{\mu}^{w_{A}})\cong X_{A}^{L}$ and $d_{A}$: $=$dim$(X_{A}^{L})=$dim$\mathrm{P}_{\lambda}/(\mathrm{P}_{\lambda}\cap \mathrm{P}_{\mu}^{w_{A}}).$

Let $w_{A}^{+}$ be the longest element in $W_{\lambda}w_{A}W_{\mu},$ then the following result gives another description of $d_{A}$ (see also [DF, Lemma 7.1]).
\begin{lemma}
Keep the above notation, we have $d_{A}= l(w_{A}^{+})-l(w_{0,\mu}).$
\end{lemma}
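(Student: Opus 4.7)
The plan is to compute $d_A = \dim X_A^L$ geometrically, using the interpretation $X_A^L \cong \mathrm{P}_\lambda w_A \mathrm{P}_\mu / \mathrm{P}_\mu$ already identified in the paragraph preceding the lemma, and then pull back to the full affine flag variety $\mathcal{B}^r \cong \mathrm{G}/\mathrm{B}$ where Bruhat theory gives dimensions directly in terms of lengths.

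First I would observe that the natural projection $\pi: \mathrm{G}/\mathrm{B} \to \mathrm{G}/\mathrm{P}_\mu$ is a locally trivial fiber bundle whose fibers are all isomorphic to $\mathrm{P}_\mu/\mathrm{B}$. Since $W_\mu \subseteq \mathfrak{S}_r$ is a finite parabolic subgroup, $\mathrm{P}_\mu/\mathrm{B}$ is a finite-dimensional flag variety of the Levi factor, with $\dim \mathrm{P}_\mu/\mathrm{B} = l(w_{0,\mu})$. Restricting $\pi$ to the preimage of the $\mathrm{P}_\lambda$-orbit yields a fiber bundle
\[
\pi: \mathrm{P}_\lambda w_A \mathrm{P}_\mu / \mathrm{B} \longrightarrow \mathrm{P}_\lambda w_A \mathrm{P}_\mu / \mathrm{P}_\mu
\]
with the same fiber, so
\[
\dim(\mathrm{P}_\lambda w_A \mathrm{P}_\mu / \mathrm{P}_\mu) = \dim(\mathrm{P}_\lambda w_A \mathrm{P}_\mu / \mathrm{B}) - l(w_{0,\mu}).
\]

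Next I would compute $\dim(\mathrm{P}_\lambda w_A \mathrm{P}_\mu / \mathrm{B})$ via the Bruhat decomposition. Since $\mathrm{P}_\lambda$ and $\mathrm{P}_\mu$ are standard parabolics containing $\mathrm{B}$ and correspond to the parabolic subgroups $W_\lambda, W_\mu \subseteq W$, one has the cell decomposition
\[
\mathrm{P}_\lambda w_A \mathrm{P}_\mu = \bigsqcup_{w \in W_\lambda w_A W_\mu} \mathrm{B} w \mathrm{B},
\]
a standard fact for Kac--Moody flag varieties (see Kumar). Quotienting by $\mathrm{B}$, each Schubert cell $\mathrm{B} w \mathrm{B}/\mathrm{B}$ has dimension $l(w)$, and the closure $\overline{\mathrm{B} w_A^+ \mathrm{B}/\mathrm{B}}$ contains all the other cells because $w \leq w_A^+$ for every $w \in W_\lambda w_A W_\mu$ (the Bruhat order restricted to a double coset has $w_A^+$ as maximum by its very definition). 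Therefore
\[
\dim(\mathrm{P}_\lambda w_A \mathrm{P}_\mu / \mathrm{B}) = \max_{w \in W_\lambda w_A W_\mu} l(w) = l(w_A^+).
\]

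Combining the two steps yields $d_A = \dim X_A^L = l(w_A^+) - l(w_{0,\mu})$, as required. The only genuine point to verify carefully is that the standard Bruhat machinery --- the cell decomposition of a parabolic double coset and the length formula $\dim(\mathrm{B} w \mathrm{B}/\mathrm{B}) = l(w)$ --- continues to hold in the affine setting of $\mathrm{G} = GL_r(\mathbf{k}[\epsilon,\epsilon^{-1}])$ with $\mathrm{B}$ the Iwahori subgroup; this is the main technical point but is classical in the Kac--Moody framework and is exactly the content used in the geometric realization of $\mathfrak{U}_{r,n,n}$ recalled in the preceding subsection. A cross-check with the alternative description in \cite{DF} (noted parenthetically in the statement) confirms the formula and could be cited if a self-contained argument is preferred over reliance on Kac--Moody Bruhat theory.
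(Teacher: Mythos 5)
Your proposal is correct and follows essentially the same route as the paper: identify $X_A^L$ with $\mathrm{P}_\lambda w_A \mathrm{P}_\mu/\mathrm{P}_\mu$, use the Bruhat decomposition $\mathrm{P}_\lambda w_A \mathrm{P}_\mu/\mathrm{B}=\bigsqcup_{y\in W_\lambda w_A W_\mu}\mathrm{B}y\mathrm{B}/\mathrm{B}$ to get dimension $l(w_A^+)$, and subtract $\dim \mathrm{P}_\mu/\mathrm{B}=l(w_{0,\mu})$. You simply make the fiber-bundle step more explicit than the paper does.
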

\begin{proof}
Note that $\mathrm{P}_{\lambda}w_{A}\mathrm{P}_{\mu}/\mathrm{P}_{\mu}$ is the $\mathrm{P}_{\lambda}$-orbit of $w_{A}\mathrm{P}_{\mu}/\mathrm{P}_{\mu}.$ Since we have Stab$_{\mathrm{P}_{\lambda}}(w_{A}\mathrm{P}_{\mu}/\mathrm{P}_{\mu})=\mathrm{P}_{\lambda}\cap \mathrm{P}_{\mu}^{w_{A}},$ thus we have as a variety:$$\mathrm{P}_{\lambda}w_{A}\mathrm{P}_{\mu}/\mathrm{P}_{\mu}\cong \mathrm{P}_{\lambda}/(\mathrm{P}_{\lambda}\cap \mathrm{P}_{\mu}^{w_{A}}).$$
Moreover, note that $$\mathrm{P}_{\lambda}w_{A}\mathrm{P}_{\mu}/\mathrm{B}=\bigsqcup_{y\in W_{\lambda}w_{A}W_{\mu}}\mathrm{B}y\mathrm{B}/\mathrm{B}.$$
From this we get dim$\mathrm{P}_{\lambda}w_{A}\mathrm{P}_{\mu}/\mathrm{B}=l(w_{A}^{+}).$ Now, we have\vskip2mm
$\hspace*{1.7cm}d_{A}=\mathrm{dim} \mathrm{P}_{\lambda}w_{A}\mathrm{P}_{\mu}/\mathrm{B}- \dim \mathrm{P}_{\mu}/\mathrm{B}=l(w_{A}^{+})-l(w_{0,\mu}).$\end{proof}

7.3 Now we recall the construction of Lusztig's canonical basis $\mathfrak{B}_{r}$ in $\mathfrak{U}_{r,n,n},$ which is consisting of elements $\{A\}, A\in \mathfrak{S}_{r, n,n}$ (see [L7, $\S4$]). Fix $A\in \mathfrak{S}_{r, n,n},$ and $L\in \mathcal{F}_{ro(A)}.$ The space $\mathcal{F}^{n}$ can be given the structure of an ind-scheme such that each set $X_{A}^{L}$ lies naturally in a projective algebraic variety. This follows from the fact that if we fix $i_{0}, j_{0}\in \mathbb{Z},$ then the subsets $$\mathcal{F}_{b, L}^{p}=\{L'\in \mathcal{F}_{b}|~\epsilon^{p}L_{i_{0}}\subset L_{j_{0}}'\subset \epsilon^{-p}L_{i_{0}}\}~~~(\mathrm{for}~p=1,2,\ldots)$$
are naturally projective algebraic varieties, each one embedded in the next, and for any fixed $A\in \mathfrak{S}_{r, n,n},$ there is a $p_{0}\in \mathbb{Z}$ such that $X_{A}^{L}$ is a locally closed subset of $\mathcal{F}_{b, L}^{p}$ for all $p\geq p_{0}.$ Thus its closure $\bar{X}_{A}^{L}$ is naturally a projective algebraic variety, which is independent of the choices of $i_{0}, j_{0}, p.$

For $A'\in \mathfrak{S}_{r, n,n},$ we write $A'\leq A$ if $X_{A'}^{L}\subseteq \bar{X}_{A}^{L}.$ Let $IC_{A}^{L}=IC(\bar{X}_{A}^{L})$ denote the simple perverse sheaf on $\bar{X}_{A}^{L}$ whose restriction to $X_{A}^{L}$ is $\mathbf{k},$ and let $(\mathcal{H}^{s}(IC_{A}^{L}))_{y}$ denote the stalk of $\mathcal{H}^{s}(IC_{A}^{L})$ at a point $y\in X_{A'}^{L},$ which is independent of the choice of $y.$ We define for $A'\leq A$ $$P_{A', A}=\sum\limits_{s}\dim (\mathcal{H}^{s}(IC_{A}^{L}))_{y} v^{s}\in \mathcal{A}.$$
Note that $P_{A', A}v^{-d_{A}+d_{A'}}=\Pi_{A', A},$ the polynomial defined in [L7, 4.1 (c)]. Following [L7, 4.1 (d)], we define a new basis for $\mathfrak{U}_{r,n,n}$ by $$\{A\}=\sum\limits_{A'; A'\leq A}\Pi_{A',A}[A'].$$
\begin{theorem}

Under the identification in Lemma 7.1, we have $\theta_{A}\cong \{A\}.$
\end{theorem}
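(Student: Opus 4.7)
The plan is to reduce the equality $\Upsilon(\{A\}) = \theta_A$ to a single polynomial identity between the geometric polynomial $P_{A',A}$ defined in \S7.3 and the algebraic Kazhdan--Lusztig polynomial $P_{w_{A'}^+, w_A^+}$, and then to establish that identity by a smooth pull-back argument on the flag ind-varieties.

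First I would unpack both sides in the basis $\{\phi_{A'}\}_{A'\in\Theta_{\vartriangle}(n,r)}$. Substituting $\alpha_{z,w_A} = t^{-l(w_A^+)}P_{z^+,w_A^+}$ from Lemma 3.1 into the definition (3.3), and writing $\phi_{\lambda,\mu}^z = \phi_{A'}$ for $A' = (\lambda,z,\mu)$, Lemma 7.2 converts the prefactor $t^{l(w_{0,\mu})-l(w_A^+)}$ into $v^{-d_A}$ and yields
\[\theta_A \;=\; v^{-d_A}\sum_{A'\le A}P_{w_{A'}^+, w_A^+}(q)\,\phi_{A'}.\]
On the other hand, $\Upsilon([A']) = v^{-d_{A'}}\phi_{A'}$, and the relation $\Pi_{A',A} = v^{d_{A'}-d_A}P_{A',A}$ from [L7, 4.1(c)] gives
\[\Upsilon(\{A\}) \;=\; \sum_{A'\le A}\Pi_{A',A}\,v^{-d_{A'}}\phi_{A'} \;=\; v^{-d_A}\sum_{A'\le A}P_{A',A}(v)\,\phi_{A'}.\]
Hence Theorem 7.1 reduces to the coefficient identity $P_{A',A}(v) = P_{w_{A'}^+,w_A^+}(v^2)$: the IC-stalk generating polynomial of $\bar X_A^L$ at a point of $X_{A'}^L$ must coincide with the algebraic Kazhdan--Lusztig polynomial attached to the pair of longest double-coset representatives.

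To prove this identity I would exploit the smooth proper projection $\pi:\mathrm{G}/\mathrm{B}\to \mathrm{G}/\mathrm{P}_\mu$, of relative dimension $l(w_{0,\mu})$ with irreducible fibers $\mathrm{P}_\mu/\mathrm{B}$. Under the identification $\mathcal{F}_{co(A)}\cong \mathrm{G}/\mathrm{P}_\mu$, the subvariety $\bar X_A^L$ corresponds to $\overline{\mathrm{P}_\lambda w_A\mathrm{P}_\mu/\mathrm{P}_\mu}$, and $\pi^{-1}(X_A^L) = \mathrm{P}_\lambda w_A\mathrm{P}_\mu/\mathrm{B}$ is irreducible of dimension $d_A + l(w_{0,\mu}) = l(w_A^+)$ by Lemma 7.2. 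Since $w_A^+$ is the longest element in $W_\lambda w_A W_\mu$, the Schubert cell $\mathrm{B}w_A^+\mathrm{B}/\mathrm{B}$ is open dense in this preimage, whence $\pi^{-1}(\bar X_A^L) = \bar Y_{w_A^+}$, the Schubert variety indexed by $w_A^+$ in $\mathrm{G}/\mathrm{B}$. Smooth pull-back then gives $\pi^*IC(\bar X_A^L)[l(w_{0,\mu})] \simeq IC(\bar Y_{w_A^+})$. Taking stalks at $\tilde y \in \mathrm{B}w_{A'}^+\mathrm{B}/\mathrm{B}$, which lies over a point of $X_{A'}^L$, identifies the graded dimensions of the two IC stalks up to the relative-dimension shift; by the geometric definition of the Kazhdan--Lusztig polynomials [KL2] combined with parity vanishing for affine type $A$ Schubert varieties, the resulting Poincar\'e polynomial equals $P_{w_{A'}^+,w_A^+}(v^2)$.

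The main obstacle I anticipate is the careful treatment of the ind-scheme structure: $\mathcal{F}^n$ and the affine flag variety $\mathrm{G}/\mathrm{B}$ are not of finite type, so the smooth pull-back formula for IC sheaves must be justified in this setting. I would handle this by working inside the nested projective subvarieties $\mathcal{F}_{b,L}^p$ introduced in \S7.3 together with their preimages in $\mathrm{G}/\mathrm{B}$, and verifying that for $p\gg 0$ the projection $\pi$ restricts to a smooth proper morphism between finite-type subvarieties to which the classical results on smooth pull-back of IC sheaves and on parity vanishing (Kazhdan--Lusztig, Kashiwara--Tanisaki) apply directly. Granted this, the identification of the algebraic and geometric Kazhdan--Lusztig polynomials in affine type $A$ is standard, and the theorem follows.
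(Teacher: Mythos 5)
Your proposal is correct and follows essentially the same route as the paper: reduce via Lemma 3.1, (3.5), and Lemma 7.2 to the polynomial identity $P_{A',A}=P_{w_{A'}^{+},w_{A}^{+}}$, and then prove that identity by pulling back $IC(\bar{X}_{A}^{L})$ along the smooth projection $\pi:\mathrm{G}/\mathrm{B}\rightarrow \mathcal{F}_{co(A)}$, identifying the preimage of $\bar{X}_{A}^{L}$ with the Schubert variety indexed by $w_{A}^{+}$, and invoking the geometric description of Kazhdan--Lusztig polynomials from [KL2]. Your extra care with the perverse shift, parity vanishing, and the finite-type approximation of the ind-scheme only makes explicit what the paper leaves implicit.
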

\begin{proof}
From Lemma 7.2, we have $[A']=\widehat{\phi}_{A'}.$ Since $-d_{A}+d_{A'}=-l(w_{A}^{+})+l(w_{A'}^{+}),$ it suffices from (3.5) to prove $P_{A', A}=P_{w_{A'}^{+}, w_{A}^{+}}.$
To prove this, we consider the following natural projection: $\pi: \mathrm{G}/\mathrm{B}\rightarrow \mathcal{F}_{co(A)}.$ Let $\mathcal{O}_{w_{A}^{+}}$ be the $\mathrm{G}$-orbit in $\mathrm{G}/\mathrm{B}$ corresponding to $w_{A}^{+}\in W,$ and let $IC_{w_{A}^{+}}=IC(\bar{\mathcal{O}}_{w_{A}^{+}})$ be the intersection cohomology complex of $\bar{\mathcal{O}}_{w_{A}^{+}}$ whose restriction to $\mathcal{O}_{w_{A}^{+}}$ is the constant sheaf $\mathbf{k}.$ Then $\pi$ induces a map $\bar{\pi}: \bar{\mathcal{O}}_{w_{A}^{+}}\rightarrow \bar{X}_{A}^{L}.$ By [GM, 5.4.2] we have $\bar{\pi}^{*}(IC_{A}^{L})=IC_{w_{A}^{+}}.$ Since $\bar{\pi}$ is a fiber bundle bundle with smooth fibers (isomorphic to a suitable partial flag variety), it follows that $$(\mathcal{H}^{s}(IC_{w_{A}^{+}}))_{y}\cong (\bar{\pi}^{*}\mathcal{H}^{s}(IC_{A}^{L})_{y}\cong (\mathcal{H}^{s}(IC_{A}^{L}))_{\bar{\pi}(y)},$$
where $(\mathcal{H}^{s}(IC_{w_{A}^{+}}))_{y}$ denotes the stalk at any point in $\mathcal{O}_{y}.$ Now, from [KL2, $\S5$] we have
$$P_{w_{A'}^{+}, w_{A}^{+}}=\sum\limits_{s}\dim (\mathcal{H}^{s}(IC_{w_{A}^{+}}))_{w_{A'}^{+}} v^{s}.$$
Consequently, $P_{w_{A'}^{+}, w_{A}^{+}}=P_{A', A}$ as desired.
\end{proof}

In [Gr, Question 4.3.4], He has raised the question whether the structure constants for $\mathcal{S}_{q}^{\vartriangle}(n,r)$ with respect to the K-L basis $\{\theta_{A}\}_{A\in \Theta_{\vartriangle}(n,r)}$ lie in $\mathbb{N}[v,v^{-1}].$ Since the structure constants for $\mathfrak{U}_{r,n,n}$ with respect to the canonical basis $\{A\},$ $A\in \mathfrak{S}_{r, n,n}$ lie in $\mathbb{N}[v,v^{-1}]$ by [L7, $\S4.2$ and $\S4.5$], thus, we get that the structure constants for the K-L basis $\{\theta_{A}\}_{A\in \Theta_{\vartriangle}(n,r)}$ also lie in $\mathbb{N}[v,v^{-1}]$ by Lemma 7.1 and Theorem 7.1.



Mathematical Sciences Center, Tsinghua University, Jin Chun Yuan West Building.

Beijing, 100084, P. R. China.

E-mail address: cwdeng@amss.ac.cn

\end{document}